\documentclass[10pt, a4paper,american]{article}
\usepackage[pdftex]{graphicx}
\usepackage{amsmath, amssymb, amsthm, amsfonts}
\usepackage{float, color}
\usepackage{ascmac, fancybox}
\usepackage{bm}

\usepackage{bbm}

\usepackage[margin=1in]{geometry}

\makeatletter

\theoremstyle{plain}
\numberwithin{equation}{section}
\theoremstyle{plain}
\newtheorem{thm}{Theorem}[section]
\theoremstyle{plain}

\theoremstyle{definition}
\newtheorem{defn}[thm]{Definition}
\theoremstyle{definition}
\newtheorem{example}[thm]{Example}
\theoremstyle{plain}
\newtheorem{lem}[thm]{Lemma}
\theoremstyle{plain}
\newtheorem{cor}[thm]{Corollary}
\theoremstyle{definition}
\newtheorem{rem}[thm]{Remark}

\topmargin 7pt   
\headheight 12pt 
\headsep 5pt     
\setlength{\oddsidemargin}{4.6mm}
\setlength{\evensidemargin}{4.6mm}
\setlength{\textwidth}{430pt}
\textheight 626.5pt  
\parindent= 1.5em    

\makeatother

\begin{document}
	
\title{Noncommutative Lebesgue decomposition and contiguity with applications in quantum statistics
}
\author{Akio Fujiwara%
	\thanks{fujiwara@math.sci.osaka-u.ac.jp}\\
	{Department of Mathematics, Osaka University}\\ 
	{Toyonaka, Osaka 560-0043, Japan}\\ \\
	and \\ \\
	Koichi Yamagata%
	\thanks{koichi.yamagata@uec.ac.jp}\\
	{Graduate School of Informatics and Engineering} \\
		{The University of Electro-Communications} \\
	{Chofu, Tokyo 182-8585, Japan}
}%
\date{}

\maketitle




\if 0
\begin{keyword}
\kwd{quantum statistics}
\kwd{Lebesgue decomposition}
\kwd{likelihood ratio}
\kwd{contiguity}
\kwd{local asymptotic normality}
\end{keyword}
\fi

\def\N{{\mathbb N}}
\def\Z{{\mathbb Z}}
\def\R{{\mathbb R}}  
\def\C{{\mathbb C}}

\def\X{{\mathcal{X}}}
\def\Y{{\mathcal{Y}}}
\def\Z{{\mathcal{Z}}}
\def\H{{\mathcal{H}}}
\def\L{{\mathcal{L}}}
\def\S{{\mathcal{S}}}
\def\M{{\mathcal{M}}}
\def\E{{\mathcal{E}}}
\def\B{{\mathcal{B}}}
\def\P{{\mathcal{P}}}

\def\a{{\alpha}}
\def\b{{\beta}}
\def\ga{{\gamma}}
\def\d{{\delta}}
\def\e{{\varepsilon}}
\def\l{{\lambda}}
\def\m{{\mu}}
\def\n{{\nu}}
\def\r{{\rho}}
\def\s{{\sigma}}
\def\t{{\tau}}
\def\th{{\theta}}
\def\x{{\xi}}
\def\y{{\eta}}
\def\z{{\zeta}}
\def\Th{{\Theta}}
\def\o{{\omega}}
\def\O{{\Omega}}

\def\span{{\rm span}}
\def\Tr{{\rm Tr\,}}
\def\tr{{\rm tr\,}}
\def\D{\mathcal{D}}
\def\G{\mathcal{G}}
\def\S{\mathcal{S}}
\def\H{\mathcal{H}}
\def\F{\mathcal{F}}
\def\K{\mathcal{K}}
\def\T{\mathcal{T}}
\def\P{\mathcal{P}}
\def\V{\mathcal{V}}
\def\W{\mathcal{W}}
\def\R{\mathbb{R}}
\def\Z{\mathbb{Z}}
\def\C{\mathbb{C}}

\def\braket#1#2{\left\langle #1|#2\right\rangle }
\def\dd#1{\frac{\mathrm{d}}{\mathrm{d}#1}}
\def\i{\sqrt{-1}}
\def\L{\mathcal{L}}
\def\N{\mathbb{N}}
\def\convd#1{\overset{#1}{\rightsquigarrow}}
\def\convp#1{\overset{#1}{\rightarrow}}
\def\convbe{\stackrel{\rightsquigarrow}{\rightsquigarrow}}
\def\A{\mathcal{A}}
\def\B{\mathcal{B}}
\def\Re{{\rm Re\,}}
\def\Im{{\rm Im\,}}
\def\convq#1{\underset{#1}{\rightsquigarrow}}
\def\ratio#1#2{\mathcal{R}\left(#1\middle|#2\right)}
\def\Ratio{\mathcal{R}}
\def\trans{\,^{t}}
\def\holevo#1#2#3{C_{#1}\left(#2,#3\right)}
\def\CCR#1{{\rm CCR}\left(#1\right)}
\def\supp{{\rm supp\,}}
\def\rank{{\rm rank\,}}
\def\sand#1#2{\left<#1,#2,#1 \right>}

\newcommand{\ket}[1]{\left | #1 \right \rangle}
\newcommand{\bra}[1]{\left \langle #1 \right |}
\newcommand{\bracket}[2]{\left \langle #1 \left | #2 \right\rangle\right.}
\newcommand{\Map}[2]{#1 \rightarrow  #2}
\newcommand{\cent}[1]{\begin{center} #1 \end{center}}
\newcommand{\rest}{{\t=\r,\s=\r}}
\newcommand{\leftDummy}{\left. \phantom{\prod\!\!\!\!\!\!\!\!\!}}

\newcommand{\argmax}{\mathop{\rm arg~max}\limits}
\newcommand{\argmin}{\mathop{\rm arg~min}\limits}


\begin{abstract}
	We herein develop a theory of contiguity in the quantum domain based upon a novel quantum analogue of the Lebesgue decomposition.
	The theory thus formulated is pertinent to the weak quantum local asymptotic normality introduced in the previous paper [Yamagata, Fujiwara, and Gill, \textit{Ann. Statist.},  \textbf{41}  (2013) 2197-2217.],  
	yielding substantial enlargement of the scope of quantum statistics. 
\end{abstract}

\section{Introduction}

Quantum statistics is a rapidly growing field of research in quantum information science.
When we consider the future direction of the field, 
we may learn much from the history of classical statistics.
One of the deepest achievements in mathematical statistics is the theory of local asymptotic normality introduced by Le Cam \cite{LeCam:1966}. 
A sequence $\left\{P_{\theta}^{(n)} \left|\;\theta\in\Theta\subset\R^{d}\right.\right\}$ 
of $d$-dimensional parametric models, 
each comprising probability measures on a measurable space $(\Omega^{(n)}, \F^{(n)})$, 
is said to be {\em locally asymptotically normal} (LAN) at $\theta_{0}\in\Theta$ 
(in the ``weak'' sense) 
if there exist a sequence $\Delta^{(n)}=(\Delta_1^{(n)},\,\dots,\,\Delta_d^{(n)})$ of $d$-dimensional random vectors and a $d\times d$ real symmetric positive definite matrix $J$ such that $\Delta^{(n)}\convd {0} N(0,J)$ and
\begin{equation}\label{eqn:c-LAN}
\log\frac{dP_{\theta_{0}+h/\sqrt{n}}^{(n)}}{dP_{\theta_{0}}^{(n)}}
=h^{i}\Delta_{i}^{(n)}-\frac{1}{2}h^{i}h^{j}J_{ij}+o_{P_{\theta_0}^{(n)}}(1),
\qquad 
(h\in\R^{d}).
\end{equation}
Here the arrow $\convd {h}$ stands for the convergence in distribution under $P_{\theta_{0}+h/\sqrt{n}}^{(n)}$, the remainder term $o_{P_{\theta_0}^{(n)}}(1)$ converges in probability to zero under $P_{\theta_0}^{(n)}$, and Einstein's summation convention is used.

The notion of local asymptotic normality provides a useful tool to cope with various statistical models in a unified manner by reducing them to relevant Gaussian shift models in the asymptotic limit.
Observe that the expansion \eqref{eqn:c-LAN} is similar in form to the log-likelihood ratio of the Gaussian shift model:
\[
\log \frac{dN(h,J^{-1})}{dN(0,J^{-1})} (X^1,\dots,X^d)
=h^i (X^j J_{ij})-\frac{1}{2}h^ih^j J_{ij}.
\]
This similarity suggests a deep relationship between the models $\{ P_{\theta_{0}+h/\sqrt{n}}^{(n)} \mid h\in\R^d\}$ and $\{N(h, J^{-1})\mid h\in\R^d\}$. 
In order to put the similarity to practical use, Le Cam introduced the notion of contiguity \cite{LeCam:1966}. 
A sequence $Q^{(n)}$ of probability measures is called {\em contiguous} with respect to another sequence $P^{(n)}$ of probability measures, denoted $Q^{(n)} \vartriangleleft P^{(n)}$, if $P^{(n)}(A^{(n)})\to 0$ implies $Q^{(n)}(A^{(n)})\to 0$ for any sequence $A^{(n)}$ of measurable sets. 
An important conclusion pertinent to the notion of contiguity is the following theorem, which is usually referred to as Le Cam's third Lemma: if $Q^{(n)} \vartriangleleft P^{(n)}$ and 
\[
 \left(X^{(n)},\frac{dQ^{(n)}}{dP^{(n)}} \right) \convd{P^{(n)}} (X,V), 
\]
then $X^{(n)} \convd{Q^{(n)}} L$, where $L$ is the law defined by $L(B):=E[1_B (X) V]$. 
Since the local asymptotic normality \eqref{eqn:c-LAN} entails mutual contiguity 
$P_{\theta_{0}+h/\sqrt{n}}^{(n)} \vartriangleleft \vartriangleright P_{\theta_{0}}^{(n)}$,
Le Cam's third lemma proves that $X^{(n)j}:=(J^{-1})^{jk} \Delta^{(n)}_k$ exhibits 
$X^{(n)}\convd {h} N(h,J^{-1})$. 
This gives a precise meaning of the statement that the model $\{ P_{\theta_{0}+h/\sqrt{n}}^{(n)} \,|\, h\in\R^d\}$ satisfying \eqref{eqn:c-LAN} is statistically similar to the Gaussian shift model $\{N(h, J^{-1})\,|\, h\in\R^d\}$. 

Note that such an interpretation is realized in the asymptotic framework. 
A measure theoretic counterpart of Le Cam's third lemma is the identity 
$dQ=({dQ}/{dP}) dP$, which is valid when $Q$ is absolutely continuous to $P$.
In the non-asymptotic framework, the likelihood ratio ${dP_{\theta_{0}+h/\sqrt{n}}^{(n)}}/{dP_{\theta_{0}}^{(n)}}$ carries full information about the measure $P_{\theta_{0}+h/\sqrt{n}}^{(n)}$ only when $P_{\theta_{0}+h/\sqrt{n}}^{(n)}$ is absolutely continuous to $P_{\theta_{0}}^{(n)}$. 
This fact demonstrates the differences between the contiguity and the absolute continuity,  highlighting the notable flexibility and usefulness of the notion of contiguity when it is used in conjunction with the weak LAN.

Extending the notion of local asymptotic normality to the quantum domain was pioneered by Gu\c{t}\u{a} and Kahn \cite{GutaQLANfor2, GutaQLANforD}.
They proved that, given a quantum parametric model $\S(\C^D)=\{ \rho_\theta>0 \mid \theta\in\Theta\subset\R^{D^2-1} \}$ comprising the totality of faithful density operators on a $D$-dimensional Hilbert space and a point $\theta_0$ on the parameter space $\Theta$ such that $\r_{\theta_0}$ is nondegenerate  (i.e., every eigenvalue of $\rho_{\theta_0}$ is simple), there exist, for any compact subset $K\;(\subset \R^{D^2-1})$, 
quantum channels $S_n$ and  $T_n$ such that
\[
 \lim_{n\to\infty}\sup_{h\in K} \left\| \sigma_h-T_n(\rho_{\theta_0+h/\sqrt{n}}^{\otimes n}) \right\|_1 =0,
 \quad\mbox{and}\quad
 \lim_{n\to\infty}\sup_{h\in K} \left\| S_n(\sigma_h)-\rho_{\theta_0+h/\sqrt{n}}^{\otimes n} \right\|_1 =0,
\]
where $\{\sigma_h \mid h\in\R^{D^2-1}\}$ is a family of density operators of a quantum Gaussian shift model $N(h, J^{-1})$ with $J$ being the RLD Fisher information matrix of $\r_\theta$ at $\theta_0\in\Theta$.
(See Appendix \ref{app:q-Gaussian} for a brief account of quantum Gaussian states.)

Note that this formulation is not a direct analogue of the weak LAN defined by \eqref{eqn:c-LAN}; in particular, the convergence to a quantum Gaussian shift model is evaluated not by the convergence in distribution but by the convergence in trace norm.
In this sense, their formulation could be called a ``strong'' q-LAN,  
(cf.  \cite[Chapter 10]{LeCam:Book}).

Gu\c{t}\u{a} and Kahn's theorem in terms of the strong q-LAN was so powerful that it was applied to the study of asymptotic quantum parameter estimation problems in \cite{YangCH}.
However, the strong q-LAN after Gu\c{t}\u{a} and Kahn is not fully satisfactory because it is applicable only to i.i.d. extensions of a quantum statistical model around a nondegenerate reference state $\rho_{\theta_0}$. 
It is natural to seek a more flexible formulation that is applicable to non-i.i.d. cases with possibly degenerate reference states. 
In \cite{GutaQLANweak}, they tried a different approach to a ``weak'' q-LAN via the Connes cocycle derivative, which was sometimes regarded as a proper quantum analogue of the likelihood ratio.
However, they did not establish an asymptotic expansion formula which would be directly analogous to \eqref{eqn:c-LAN} in the classical LAN.

A different approach to a weak q-LAN was put forward in \cite{YFG}, in which a sequence of quantum statistical models comprising mutually absolutely continuous density operators was treated.
Here, density operators $\r$ and $\s$ on a finite dimensional Hilbert space are said to be 
{\em mutually absolutely continuous}, $\r\sim\s$ in symbols, if there exists a Hermitian operator $\L$ that satisfies
\[
  \s=e^{\frac{1}{2}\L}\r e^{\frac{1}{2}\L}.
\]
The operator $\L$ satisfying this relation is called (a version of) the {\em quantum log-likelihood ratio}. 
When the reference states $\r$ and $\s$ need to be specified, $\L$ is denoted as $\L(\s |\r)$, so that 
\[
  \s=e^{\frac{1}{2}\L(\s |\r)} \r e^{\frac{1}{2}\L(\s |\r)}.
\]
For example, when both $\r$ and $\s$ are strictly positive, the quantum log-likelihood ratio is uniquely given by
\[
 \L(\s |\r)=2\log\left(\s \# \r^{-1} \right).
\]
Here, $\#$ denotes the operator geometric mean \cite{{Bhatia},{KuboAndo}}: 
for strictly positive operators $A$ and $B$, the operator geometric mean $A\# B$ 
is defined as the unique positive operator $X$ that satisfies the equation $B=XA^{-1}X$, 
and is explicitly given by
\[ A\# B=\sqrt{A}\sqrt{\sqrt{A^{-1}} B \sqrt{A^{-1}}\,} \sqrt{A}. \]

The theory of weak q-LAN developed in \cite{YFG} was successfully applied to quantum statistical models satisfying only some mild regularity conditions, and clarified that the Holevo bound was asymptotically achievable. 
However, this formulation, too, is not fully satisfactory because it is applicable only to quantum statistical models that comprises mutually absolutely continuous density operators.
This is in good contrast to the classical definition \eqref{eqn:c-LAN}, in which mutual absolute continuity for the model was not assumed \cite{Vaart}. 
The key idea behind this classical formulation is the use of the Radon-Nikodym density, 
or more fundamentally, the use of the Lebesgue decomposition of $P_{\theta_{0}+h/\sqrt{n}}^{(n)}$ with respect to $P_{\theta_{0}}^{(n)}$.
Thus, in order to extend such a flexible formulation to the quantum domain, we must invoke an appropriate quantum counterpart of the Lebesgue decomposition.
Several noncommutative analogues of 
the Lebesgue decomposition and/or the Radon-Nikodym derivative 
have been devised, e.g., \cite{{Blackadar}, {Connes}, {ConnesBook}, {Dye}, {KadisonRingrose}, {Kosaki}, {OhyaPetz}, {Parthasarathy}, {PedersenTakesaki}, {PetzBook}, {Sakai}, {Takesaki}, {UmegakiOH}}.
However, each of them has its own scope, and to the best of our knowledge, no appropriate quantum counterpart that is applicable to the theory of weak q-LAN has been established. 

The objective of the present paper is threefold:
Firstly, we devise a novel quantum analogue of the Lebesgue decomposition that is pertinent to the framework of weak q-LAN introduced in the previous paper \cite{YFG}. 
Secondly, we develop a theory of contiguity in the quantum domain based on the novel quantum Lebesgue decomposition.
One of the remarkable achievements of the theory is the abstract version of Le Cam's third lemma (Theorem \ref{thm:qLeCam3}). 
Finally, we apply the theory of quantum contiguity to weak q-LAN, 
yielding substantial enlargement of the scope of q-LAN as compared with the previous paper \cite{YFG}. 

The present paper is organized as follows. 
In Section \ref{sec:acANDsing}, we extend the notions of absolute continuity and singularity to the quantum domain in order that they are fully consistent with the notion of mutual absolute continuity introduced in \cite{YFG}. 
In Section \ref{sec:Lebesgue}, we formulate a quantum Lebesgue decomposition based on the quantum absolute continuity and singularity introduced in Section \ref{sec:acANDsing}.
In Section \ref{sec:contiguity}, we develop a theory of quantum contiguity by taking full advantage of the novel quantum Lebesgue decomposition established in Section \ref{sec:Lebesgue}. 
In Section \ref{sec:convergenceInLaw}, we introduce the notion of convergence in distribution in terms of the quasi-characteristic function, and prove a noncommutative version of the L\'evy-Cram\'er continuity theorem under the ``sandwiched'' convergence in distribution, which plays a key role in the subsequent discussion.
In Section \ref{sec:lecam3}, we prove a quantum counterpart of the Le Cam third lemma. This achievement manifests the validity of the novel quantum Lebesgue decomposition and quantum contiguity as well as the notion of sandwiched convergence in distribution.
In Section \ref{sec:example}, we give some illustrative examples that demonstrate the flexibility and applicability of the present formulation in asymptotic quantum statistics, 
including a quantum contiguity version of the Kakutani dichotomy, 
and enlargement of the scope of q-LAN.
Section \ref{sec:conclusion} is devoted to brief concluding remarks. 
For the reader's convenience, some additional material is presented in Appendix, including the quantum Gaussian states, and a noncommutative L\'evy-Cram\'er continuity theorem.

\section{Absolute continuity and singularity}\label{sec:acANDsing}

Given positive operators $\r$ and $\s$ on a (finite dimensional) Hilbert space $\H$ with $\r\neq 0$, 
let $\sigma\!\!\downharpoonleft_{\supp\rho}$ denote the {\em excision} of $\s$ relative to $\r$ by the operator on the subspace $\supp\rho:=(\ker\rho)^\perp$ of $\H$ defined by
\[ \sigma\!\!\downharpoonleft_{\supp\rho}:=\iota_\rho^*\, \sigma\, \iota_\rho, \]
where $\iota_\rho: \supp\rho\hookrightarrow \H$ is the inclusion map. 
More specifically, let
\begin{equation}\label{eqn:blockRS}
\rho=\begin{pmatrix} \rho_0 & 0\\ 0 & 0 \end{pmatrix},
\qquad
\sigma=\begin{pmatrix} \sigma_0 & \alpha\\ \alpha^* & \beta
\end{pmatrix}
\end{equation}
be a simultaneous block matrix representations of $\r$ and $\s$, where $\r_0>0$. 
Then the excision $\sigma\!\!\downharpoonleft_{\supp\rho}$ is nothing but the operator represented by the $(1,1)$th block $\s_0$ of $\s$. 
The notion of excision was exploited in \cite{YFG}. 
In particular, it was shown that $\r$ and $\s$ are mutually absolutely continuous if and only if 
\[
 \sigma\!\!\downharpoonleft_{\supp\rho}>0 \quad\mbox{and}\quad \rank\rho=\rank\sigma,
\]
or equivalently, if and only if
\begin{equation}\label{eqn:mutuallyAC}
 \sigma\!\!\downharpoonleft_{\supp\rho}>0 \quad\mbox{and}\quad \rho\!\!\downharpoonleft_{\supp\sigma}>0.
\end{equation}

Now we introduce noncommutative analogues of the notions of absolute continuity and singularity that played essential roles in the classical measure theory. 
Given positive operators $\r$ and $\s$, we say $\r$ is {\em singular} with respect to $\s$, denoted $\r\perp\s$, if
\[
 \sigma\!\!\downharpoonleft_{\supp\rho}=0.
\]
The following lemma implies that the relation $\perp$ is symmetric; 
this fact allows us to say that $\r$ and $\s$ are mutually singular, as in the classical case. 

\begin{lem}\label{lem:1}
For nonzero positive operators $\r$ and $\s$, the following are equivalent. 
\begin{itemize}
\item[{\rm (a)}]  $\r\perp\s$. 
\item[{\rm (b)}]  $\supp\rho \perp \supp\sigma$.
\item[{\rm (c)}]  $\Tr \r \s=0$.
\end{itemize}
\end{lem}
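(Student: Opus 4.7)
The plan is to prove the cycle (a) $\Rightarrow$ (b) $\Rightarrow$ (c) $\Rightarrow$ (a) by exploiting the block representation \eqref{eqn:blockRS} of $\r$ and $\s$ adapted to the splitting $\H=\supp\r\oplus(\supp\r)^\perp$.

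First, I would establish (a) $\Rightarrow$ (b). Condition (a) says $\s_0 = 0$ in the block form \eqref{eqn:blockRS}. The key observation is that positivity of the whole block matrix $\s$ forces the off-diagonal block $\alpha$ to vanish whenever $\s_0=0$: indeed, for any vectors $x\in\supp\r$ and $y\in(\supp\r)^\perp$, the $2\times 2$ compression $\begin{pmatrix} 0 & \langle x,\alpha y\rangle \\ \langle \alpha y, x\rangle & \langle y,\beta y\rangle\end{pmatrix}$ must be positive semidefinite, which forces $\langle x,\alpha y\rangle=0$. Hence $\s=\begin{pmatrix}0&0\\0&\beta\end{pmatrix}$, so $\supp\s\subset(\supp\r)^\perp$, giving (b).

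Next, (b) $\Rightarrow$ (c) is immediate: if $\supp\r\perp\supp\s$, then $\s$ maps $\H$ into $\supp\s\subset\ker\r$, so $\r\s=0$ and in particular $\Tr\r\s=0$. Finally, for (c) $\Rightarrow$ (a), I would compute $\Tr\r\s$ in the block representation \eqref{eqn:blockRS}, obtaining $\Tr\r\s=\Tr(\r_0\s_0)=\Tr(\r_0^{1/2}\s_0\r_0^{1/2})$. Since $\r_0^{1/2}\s_0\r_0^{1/2}\ge 0$ and its trace vanishes, it must itself be zero; because $\r_0^{1/2}$ is invertible on $\supp\r$, this yields $\s_0=0$, i.e., $\s\!\!\downharpoonleft_{\supp\r}=0$, which is (a).

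No step is genuinely difficult; the only place requiring a moment of care is the deduction in (a) $\Rightarrow$ (b) that the off-diagonal blocks vanish once the diagonal block does, which hinges on the positivity of $\s$ as a whole. Once that point is handled, the symmetry of $\perp$ that the lemma is designed to establish follows immediately from the symmetric characterizations (b) and (c).
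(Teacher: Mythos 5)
Your proposal is correct and follows essentially the same route as the paper: the same block representation \eqref{eqn:blockRS}, the same use of positivity of $\s$ to kill the off-diagonal block when $\s_0=0$, and the same reduction of (c) to $\Tr\r_0\s_0=0$ with $\r_0>0$ forcing $\s_0=0$. The extra details you supply (the $2\times 2$ compression argument and the $\r_0^{1/2}\s_0\r_0^{1/2}$ trace argument) merely flesh out steps the paper leaves implicit.
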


\begin{proof}
Let us represent $\r$ and $\s$ in the form \eqref{eqn:blockRS}. 
Then, (a) is equivalent to $\s_0=0$. 
In this case, the positivity of $\s$ entails that the off-diagonal blocks $\a$ and $\a^*$ of $\s$ also vanish, and $\s$ takes the form
\[ 
\sigma=\begin{pmatrix}
0 & 0\\
0 & \beta
\end{pmatrix}.
\]
This implies (b). 
Next, (b) $\Rightarrow$ (c) is obvious.  
Finally, assume (c). 
With the representation \eqref{eqn:blockRS}, this is equivalent to $\Tr \r_0 \s_0=0$.
Since $\r_0>0$, we have $\s_0=0$, proving (a).
\end{proof}

We next introduce the notion of absolute continuity. 
Given positive operators $\r$ and $\s$, we say $\r$ is {\em absolutely continuous} with respect to $\s$, denoted $\r\ll\s$,  if
\[
 \sigma\!\!\downharpoonleft_{\supp\rho}>0.
\]

Some remarks are in order. 
Firstly, the above definition of absolute continuity is consistent with the definition of mutual absolute continuity: 
in fact, as demonstrated in \eqref{eqn:mutuallyAC}, $\r$ and $\s$ are mutually absolutely continuous if and only if both $\r\ll\s$ and $\s\ll\r$ hold. 
Secondly, $\r\ll\s$ is a much weaker condition than $\supp\r\subset\supp\s$: 
this makes a striking contrast to the classical measure theory. 
For example, pure states $\r=\ket{\psi}\bra{\psi}$ and $\s=\ket{\xi}\bra{\xi}$ are mutually absolutely continuous if and only if $\braket{\xi}{\psi}\neq 0$, (see \cite[Example 2.3]{YFG}).

The following lemma plays a key role in the next section, leading to a novel noncommutative Lebesgue decomposition.

\begin{lem}\label{lem:2}
For nonzero positive operators $\r$ and $\s$, the following are equivalent. 
\begin{itemize}
\item[{\rm (a)}]  $\r\ll\s$. 
\item[{\rm (b)}]  $\exists R> 0$ such that $\s\ge R\r R$.
\item[{\rm (c)}]  $\exists R> 0$ such that $\r\le R\s R$.
\item[{\rm (d)}]  $\exists R\ge 0$ such that $\r=R\s R$.
\item[{\rm (e)}]  $\exists R\ge 0$ such that $\r \geq R \s R$ and $\Tr \r =\Tr \s R^2$.
\end{itemize}
\end{lem}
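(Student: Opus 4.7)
The plan is to prove the equivalences via two cycles meeting at condition (a): namely (a)$\Rightarrow$(d)$\Rightarrow$(e)$\Rightarrow$(a) and (a)$\Rightarrow$(b)$\Rightarrow$(c)$\Rightarrow$(a). Throughout I work in the block decomposition \eqref{eqn:blockRS} of $\rho,\sigma$ relative to $\H=\supp\rho\oplus\ker\rho$, where $\rho_0>0$ by construction, and I freely use the observation that $\sigma\!\!\downharpoonleft_{\supp\rho}>0$ is equivalent to $\supp\rho\cap\ker\sigma=\{0\}$ (a consequence of the positivity of $\sigma$).

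Several pieces are immediate. For (b)$\Leftrightarrow$(c), substitute $R\leftrightarrow R^{-1}$, which is legitimate because $R>0$ is invertible. For (d)$\Leftrightarrow$(e), use that a positive operator of vanishing trace is zero, so $\rho-R\sigma R\ge 0$ together with $\Tr(\rho-R\sigma R)=0$ forces $\rho=R\sigma R$. For (a)$\Rightarrow$(d), take $R=(\rho_0\#\sigma_0^{-1})\oplus 0$; this is well-defined because (a) gives $\sigma_0>0$, and the defining identity $(\rho_0\#\sigma_0^{-1})\sigma_0(\rho_0\#\sigma_0^{-1})=\rho_0$ of the geometric mean yields $R\sigma R=\rho$ by direct block multiplication.

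The main technical step is (a)$\Rightarrow$(b). A purely diagonal $R$ will not suffice in general: for rank-one mutually absolutely continuous pure states $\rho,\sigma$, no positive multiple of $\rho$ is majorized by $\sigma$. The remedy is to introduce an off-diagonal coupling. Write $R=\begin{pmatrix} R_{00} & R_{01}\\ R_{01}^* & R_{11}\end{pmatrix}$ in the decomposition $\supp\rho\oplus\ker\rho$ and take
\[
R_{00}:=\sigma_0\#\rho_0^{-1},\qquad R_{01}:=R_{00}\sigma_0^{-1}\alpha,
\]
so that $R_{00}\rho_0 R_{00}=\sigma_0$ and $R_{00}\rho_0 R_{01}=\alpha$. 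A direct block calculation then shows that the $(1,1)$, $(1,2)$, and $(2,1)$ blocks of $\sigma-R\rho R$ all vanish, while the $(2,2)$ block reduces to $\beta-\alpha^*\sigma_0^{-1}\alpha$, which is precisely the Schur complement of $\sigma_0$ in $\sigma$ and hence $\ge 0$ by positivity of $\sigma$. Finally, choosing $R_{11}>R_{01}^*R_{00}^{-1}R_{01}$ makes $R>0$ by Schur's criterion, completing the construction.

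For the reverse directions I argue by contradiction. For (b)$\Rightarrow$(a), suppose some nonzero $w_0\in\supp\rho$ satisfies $\sigma_0 w_0=0$; by positivity of $\sigma$, the embedded vector $w=\iota_\rho w_0$ satisfies $\sigma w=0$. Then from (b),
\[
0=\langle w,\sigma w\rangle\ge\langle Rw,\rho Rw\rangle\ge 0,
\]
so $Rw\in\ker\rho$; writing $Rw$ blockwise yields $R_{00}w_0=0$, contradicting $R_{00}>0$ (a consequence of $R>0$). For (d)$\Rightarrow$(a), I switch to the complementary block form $\H=\ker\sigma\oplus\supp\sigma$ with $\sigma=0\oplus\tilde\sigma$, $\tilde\sigma>0$. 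Writing $R=\begin{pmatrix} A & B\\ B^* & C\end{pmatrix}$ in this decomposition, the range condition $\ker C\subseteq\ker B$ (a consequence of $R\ge 0$) forces $\rho u\in\ker\sigma$ in the identity $\rho=R\sigma R$ to imply $\rho u=0$, so $\supp\rho\cap\ker\sigma=\{0\}$. The principal obstacle is the construction in (a)$\Rightarrow$(b): the correct off-diagonal $R_{01}$ is what couples the $\supp\rho$ and $\ker\rho$ components of $R$, and it is not revealed until one writes $\sigma-R\rho R$ out blockwise and recognizes the residual $(2,2)$ entry as the Schur complement of $\sigma$.
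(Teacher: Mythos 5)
Your proof is correct, and most of it travels the same road as the paper's. The witness you construct for (a)$\Rightarrow$(b), with $R_{00}=\sigma_0\#\rho_0^{-1}$, $R_{01}=R_{00}\sigma_0^{-1}\alpha$, and $R_{11}$ fixed by Schur's criterion, is precisely the paper's operator $R=E^*\bigl((\sigma_0\#\rho_0^{-1})\oplus\gamma\bigr)E$ with $E$ the unit upper-triangular congruence; you merely verify $\sigma-R\rho R=0\oplus(\beta-\alpha^*\sigma_0^{-1}\alpha)\ge 0$ by direct block multiplication instead of transporting everything through $E$, and your (a)$\Rightarrow$(d) witness $(\rho_0\#\sigma_0^{-1})\oplus 0$ is the paper's as well. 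The genuine differences are in the reverse directions. For (b)$\Rightarrow$(a) the paper computes the excision directly as $R_0\rho_0R_0+\tau_0>0$, while you run a contradiction through $\supp\rho\cap\ker\sigma$; same substance. For (d)$\Rightarrow$(a) the paper decomposes relative to $\supp R$, reads off $\rho=(R_0\sigma_0R_0)\oplus 0$, and falls back on the characterization \eqref{eqn:mutuallyAC} imported from \cite{YFG} ($\rho_0\sim\sigma_0$, hence $\rho_0\ll\sigma_0$), whereas you decompose relative to $\ker\sigma\oplus\supp\sigma$, write $\sigma=0\oplus\tilde\sigma$ with $\tilde\sigma>0$, and use the kernel inclusion $\ker C\subseteq\ker B$ valid for any positive block operator $\left(\begin{smallmatrix}A & B\\ B^{*} & C\end{smallmatrix}\right)$: blockwise, $\rho=R\sigma R$ has entries $B\tilde\sigma B^{*}$, $B\tilde\sigma C$, $C\tilde\sigma B^{*}$, $C\tilde\sigma C$, so for $u=(u_1,u_2)$ the condition $\rho u\in\ker\sigma$ reads $C\tilde\sigma v=0$ with $v:=B^{*}u_1+Cu_2$, whence $\tilde\sigma v\in\ker C\subseteq\ker B$, $\rho u=0$, and $\supp\rho\cap\ker\sigma=\{0\}$. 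This variant is more self-contained (no appeal to mutual absolute continuity), though as you wrote it the step is quite terse and the reader has to reconstruct exactly this computation; the paper's version instead recycles machinery already in place. Your explicit treatments of (b)$\Leftrightarrow$(c) (replace $R$ by $R^{-1}$) and (d)$\Leftrightarrow$(e) (a positive operator with zero trace vanishes) match what the paper dismisses as obvious.
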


\begin{proof}
We first prove (a) $\Rightarrow$ (b). 
Let
\[ 
\rho=\begin{pmatrix} \rho_0 & 0\\ 0 & 0 \end{pmatrix},
\qquad
\sigma=\begin{pmatrix} \sigma_0 & \alpha\\ \alpha^* & \beta
\end{pmatrix}
\]
where $\r_0>0$. 
Since $\s_0=\s\!\!\downharpoonleft_{\supp\r}>0$, 
the matrix $\s$ is further decomposed as
\[
\sigma=
E^*
\begin{pmatrix}
\sigma_0 & 0\\
0 & \beta-\alpha^* \sigma_0^{-1} \alpha
\end{pmatrix}
E,
\qquad
E:=
\begin{pmatrix}
I & \sigma_0^{-1} \alpha\\
0& I
\end{pmatrix}.
\]
Note that, since $\s\ge 0$ and $E$ is full-rank, we have
\begin{equation}\label{eqn:sigma22}
\beta-\alpha^* \sigma_0^{-1} \alpha\ge 0.
\end{equation} 
Now we set
\[
R:=E^* \begin{pmatrix} X & 0\\ 0 & \ga \end{pmatrix} E,
\]
where $X:=\s_0 \# \r_0^{-1}$, 
and $\ga$ is an arbitrary strictly positive operator. 
Then 
\begin{eqnarray*}
R\r R
&=& 
E^* \begin{pmatrix} X & 0\\ 0 & \ga \end{pmatrix} E
\begin{pmatrix} \rho_0 & 0\\ 0 & 0 \end{pmatrix}
E^* \begin{pmatrix} X & 0\\ 0 & \ga \end{pmatrix} E \\
&=& 
E^* \begin{pmatrix} X & 0\\ 0 & \ga \end{pmatrix}
\begin{pmatrix} \rho_0 & 0\\ 0 & 0 \end{pmatrix}
\begin{pmatrix} X & 0\\ 0 & \ga \end{pmatrix} E \\
&=& 
E^* \begin{pmatrix} X \rho_0 X & 0\\ 0 & 0 \end{pmatrix} E \\
&=& 
E^* \begin{pmatrix} \s_0 & 0\\ 0 & 0 \end{pmatrix} E \\
&\le& 
E^* \begin{pmatrix} \s_0 & 0\\ 0 & \beta-\alpha^* \sigma_0^{-1} \alpha \end{pmatrix} E
=
\s.
\end{eqnarray*}
Here, the inequality is due to \eqref{eqn:sigma22}. 
Since $R>0$, we have (b).

We next prove (b) $\Rightarrow$ (a). 
Due to assumption, there is a positive operator $\t\ge 0$ such that 
\[ \s=R\r R+\t. \]
Let
\[ 
\rho=\begin{pmatrix} \rho_0 & 0\\ 0 & 0 \end{pmatrix},\qquad
R=\begin{pmatrix} R_0 & R_1\\ R_1^* & R_2 \end{pmatrix},\qquad
\t=\begin{pmatrix} \t_0 & \t_1\\ \t_1^* & \t_2 \end{pmatrix},
\]
where $\r_0>0$. 
Then
\[
 \s=\begin{pmatrix} R_0\r_0 R_0+\t_0 & R_0\r_0 R_1 +\t_1
 	\\ R_1^*\r_0 R_0+\t_1^* & R_1^*\r_0 R_1+\t_2 \end{pmatrix}
\]
and
\[
 \s\!\!\downharpoonleft_{\supp\r}=R_0\r_0 R_0+\t_0. 
\]
Since $R_0>0$ and $\t_0\ge 0$, we have $\s\!\!\downharpoonleft_{\supp\r}>0$.

For the proof of (a) $\Rightarrow$ (d), let
\[ 
\r=\begin{pmatrix} \r_0 & 0\\ 0 & 0 \end{pmatrix},
\qquad
\s=\begin{pmatrix} \s_0 & \a \\ \a^* & \b
\end{pmatrix}, 
\]
where $\r_0>0$. Since $\s_0=\s\!\!\downharpoonleft_{\supp\r}>0$, 
\[
R:=\begin{pmatrix} \r_0 \# \s_0^{-1} & 0\\ 0 & 0 \end{pmatrix}
\]
is a well-defined positive operator satisfying
\[
 \r=R \s R. 
\]
This proves (d). 

For (d) $\Rightarrow$ (a), 
let the positive operator $R$ in $\r=R \s R$ be represented as
\[ 
R=\begin{pmatrix} R_0 & 0\\ 0 & 0 \end{pmatrix},
\]
where $R_0>0$, and accordingly, let us represent $\r$ and $\s$ as
\[ 
\r=\begin{pmatrix} \r_0 & \r_1\\ \r_1^* & \r_2 \end{pmatrix},
\qquad
\s=\begin{pmatrix} \s_0 & \s_1 \\ \s_1^* & \s_2
\end{pmatrix}.
\]
The relation $\r=R \s R$ is then reduced to 
\[ 
\begin{pmatrix} \r_0 & \r_1\\ \r_1^* & \r_2 \end{pmatrix}
=
\begin{pmatrix} R_0 \s_0 R_0 & 0 \\ 0 & 0
\end{pmatrix}.
\]
This implies that $\supp\r=\supp\r_0$ and $\r_0\sim\s_0$. 
Consequently, 
\[
 \s\!\!\downharpoonleft_{\supp\r}
 =\s\!\!\downharpoonleft_{\supp\r_0}
 =\s_0\!\!\downharpoonleft_{\supp\r_0}
 \,>0.
\]
In the last inequality, we used the fact that $\r_0\sim\s_0$ implies $\r_0\ll\s_0$. 

Now that (b) $\Leftrightarrow$ (c) and (d) $\Leftrightarrow$ (e) are obvious, the proof is complete.
\end{proof}

\section{Lebesgue decomposition}\label{sec:Lebesgue}

In this section, we extend the Lebesgue decomposition to the quantum domain.

\subsection{Case 1:  when $\s \gg \r$}

To elucidate our motivation, 
let us first treat the case when $\s\gg \r$. 
In Lemma \ref{lem:2}, we found the following characterization: 
\[
 \s\gg\r \;\Longleftrightarrow\; \exists R> 0
 \mbox{ such that $\s\ge R\r R$}.
\]
Note that such an operator $R$ is not unique. 
For example, suppose that $\s\ge R_1\r R_1$ holds for some $R_1>0$.
Then for any $t\in (0,1]$, the operator $R_t:=t R_1$ is strictly positive and satisfies $\s\ge R_t \r R_t$. 
It is then natural to seek, if any, the ``maximal'' operator of the form $R\r R$ that is packed into $\s$.  
Put differently, letting $\t:=\s-R\r R$, we want to find the ``minimal'' positive operator $\t$ that satisfies 
\begin{equation}\label{eqn:decomp0}
 \s=R\r R+\t,
\end{equation}
where $R>0$.
This question naturally leads us to a noncommutative analogue of the Lebesgue decomposition, 
in that 
a positive operator $\t$ satisfying \eqref{eqn:decomp0} is regarded as minimal if $\t \perp \r$. 

In the proof of Lemma \ref{lem:2}, we found the following decomposition: 
\begin{eqnarray*}
\s
&=&E^* \begin{pmatrix} \s_0 & 0\\ 0 & \beta-\alpha^* \sigma_0^{-1} \alpha \end{pmatrix} E \\
&=&E^* \begin{pmatrix} \s_0 & 0\\ 0 & 0  \end{pmatrix} E
+E^* \begin{pmatrix} 0 & 0\\ 0 & \beta-\alpha^* \sigma_0^{-1} \alpha \end{pmatrix} E \\
&=&R\r R+\begin{pmatrix} 0 & 0\\ 0 & \beta-\alpha^* \sigma_0^{-1} \alpha \end{pmatrix}
\end{eqnarray*}
where
\[
\rho=\begin{pmatrix} \r_0 & 0\\ 0 & 0 \end{pmatrix},
\quad
\sigma=\begin{pmatrix} \s_0 & \a\\ \a^* & \beta \end{pmatrix},
\quad
E:=\begin{pmatrix} I & \s_0^{-1} \a\\ 0& I \end{pmatrix},
\quad 
R=E^* \begin{pmatrix} \s_0 \# \r_0^{-1} & 0\\ 0 & I \end{pmatrix} E
\]
with $\r_0>0$ and $\s_0>0$. 
Since
\[
 \begin{pmatrix} \r_0 & 0\\ 0 & 0 \end{pmatrix}
 \perp
 \begin{pmatrix} 0 & 0\\ 0 & \beta-\alpha^* \sigma_0^{-1} \alpha \end{pmatrix},
\]
we have the following decomposition:
\begin{equation}\label{eqn:LebDec1}
 \s=\s^{ac}+\s^\perp,
\end{equation}
where
\begin{equation}\label{eqn:LebDec1AC}
\s^{ac}:=R\r R=\begin{pmatrix} \s_0 & \a \\ \a^* & \alpha^* \sigma_0^{-1} \alpha \end{pmatrix}
\end{equation}
is the (mutually) {\em absolutely continuous part }of $\s$ with respect to  $\r$, 
and 
\begin{equation}\label{eqn:LebDec1singular}
\s^\perp:=\begin{pmatrix} 0 & 0\\ 0 & \beta-\alpha^* \sigma_0^{-1} \alpha \end{pmatrix}
\end{equation}
is the {\em singular part} of $\s$ with respect to  $\r$. 

We may call the decomposition \eqref{eqn:LebDec1} a {\em quantum Lebesgue decomposition} for the following reasons. 
Firstly, 
although \eqref{eqn:LebDec1} was defined by using a simultaneous block matrix representation of $\r$ and $\s$, which has an arbitrariness of unitary transformations of the form $U_1\oplus U_2$, 
the matrices \eqref{eqn:LebDec1AC} and \eqref{eqn:LebDec1singular} are covariant under those unitary transformations, and hence 
the operators $\s^{ac}$ and $\s^\perp$ are well-defined regardless of the arbitrariness of the block matrix representation. 
Secondly, 
the decomposition \eqref{eqn:LebDec1} is unique, as the following lemma asserts. 

\begin{lem}\label{lem:uniqueness1}
Suppose $\s\gg\r$. Then the decomposition
\begin{equation}\label{eqn:qLebesgue1}
 \s=\s^{ac}+\s^\perp\qquad (\s^{ac} \ll \r,\;\s^\perp \perp \r)
\end{equation}
is uniquely given by \eqref{eqn:LebDec1AC} and \eqref{eqn:LebDec1singular}.
\end{lem}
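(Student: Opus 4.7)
The plan is to start from an arbitrary decomposition $\s = \tau^{ac} + \tau^\perp$ satisfying $\tau^{ac} \ll \r$ and $\tau^\perp \perp \r$, and show, in the simultaneous block representation \eqref{eqn:blockRS} adapted to $\r$, that the two summands must coincide with the explicit matrices \eqref{eqn:LebDec1AC} and \eqref{eqn:LebDec1singular}. The argument reduces to solving three coupled block equations in sequence.

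First I would exploit $\tau^\perp \perp \r$: by Lemma \ref{lem:1} this is equivalent to $\supp \tau^\perp \perp \supp \r$, so $\tau^\perp$ must annihilate every vector in $\supp \r$. In the representation where $\r = \begin{pmatrix} \r_0 & 0 \\ 0 & 0 \end{pmatrix}$ with $\r_0 > 0$, this forces
\begin{equation*}
\tau^\perp = \begin{pmatrix} 0 & 0 \\ 0 & \tau_2 \end{pmatrix}, \qquad \tau_2 \ge 0,
\end{equation*}
and consequently $\tau^{ac} = \s - \tau^\perp = \begin{pmatrix} \s_0 & \a \\ \a^* & \b - \tau_2 \end{pmatrix}$. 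It only remains to identify $\tau_2$.

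Next, I would apply Lemma \ref{lem:2}(d) to $\tau^{ac} \ll \r$, obtaining a representation $\tau^{ac} = R \r R$ with $R = \begin{pmatrix} R_0 & R_1 \\ R_1^* & R_2 \end{pmatrix} \ge 0$. Expanding $R \r R$ and equating blocks yields
\begin{equation*}
R_0 \r_0 R_0 = \s_0, \qquad R_0 \r_0 R_1 = \a, \qquad R_1^* \r_0 R_1 = \b - \tau_2.
\end{equation*}
The first equation determines $R_0$, the second then determines $R_1$, and the third finally pins down $\tau_2$.

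The main obstacle is the uniqueness of $R_0$ in the first equation. Since $\s \gg \r$ gives $\s_0 > 0$, the congruence $R_0 \mapsto \r_0^{1/2} R_0 \r_0^{1/2}$ (positivity-preserving because $\r_0 > 0$) converts $R_0 \r_0 R_0 = \s_0$ into $\bigl(\r_0^{1/2} R_0 \r_0^{1/2}\bigr)^2 = \r_0^{1/2} \s_0 \r_0^{1/2}$, whose positive square root is unique; hence $R_0$ is uniquely determined (in fact $R_0 = \r_0^{-1} \# \s_0$), and strict positivity of $\s_0$ forces $R_0 > 0$. Invertibility of $R_0$ and $\r_0$ then gives $R_1 = (R_0 \r_0)^{-1} \a$, whence
\begin{equation*}
R_1^* \r_0 R_1 = \a^* (R_0 \r_0 R_0)^{-1} \a = \a^* \s_0^{-1} \a,
\end{equation*}
and $\tau_2 = \b - \a^* \s_0^{-1} \a$. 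This matches \eqref{eqn:LebDec1singular}, and $\tau^{ac}$ correspondingly matches \eqref{eqn:LebDec1AC}, establishing uniqueness.
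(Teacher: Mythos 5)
Your proof is correct and takes essentially the same route as the paper's: both reduce the problem via Lemma \ref{lem:2}(d) to a block-matrix comparison in a basis adapted to $\supp\r$ (with the singular part forced into the $(2,2)$ corner by Lemma \ref{lem:1}), and both hinge on the uniqueness of the positive solution of $R_0\r_0R_0=\s_0$, i.e.\ the geometric mean $\s_0\#\r_0^{-1}$, which you reprove directly via the square-root trick. The only cosmetic difference is that the paper conjugates by the upper-triangular matrix $E$ so that $\s$ becomes block-diagonal and $R_1=0$ falls out, whereas you work in the original blocks and solve $R_1=(R_0\r_0)^{-1}\a$ explicitly; either way the singular part is pinned down as $\b-\a^*\s_0^{-1}\a$.
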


\begin{proof}
We show that the decomposition
\begin{equation}\label{eqn:uniqueness1}
 \s=R\r R+\t\qquad (R \ge 0,\,\t\ge 0,\; \t \perp \r)
\end{equation}
is unique. 
Let
\[
\rho=\begin{pmatrix} \r_0 & 0\\ 0 & 0 \end{pmatrix},
\quad
\sigma=\begin{pmatrix} \s_0 & \a\\ \a^* & \beta \end{pmatrix}
\]
with $\r_0>0$. 
Due to assumption $\r\ll\s$, we have $\s_0>0$. 
Let
\[
E:=\begin{pmatrix} I & \s_0^{-1} \a\\ 0& I \end{pmatrix}.
\]
Since $E$ is invertible, the operator $R$ appeared in \eqref{eqn:uniqueness1} is represented in the form
\[
R=E^* \begin{pmatrix} R_0 & R_1 \\ R_1^* & R_2 \end{pmatrix} E.
\]
With this representation
\begin{eqnarray*}
R\r R
&=& 
E^* \begin{pmatrix} R_0 & R_1 \\ R_1^* & R_2  \end{pmatrix} E
\begin{pmatrix} \rho_0 & 0\\ 0 & 0 \end{pmatrix}
E^* \begin{pmatrix} R_0 & R_1 \\ R_1^* & R_2 \end{pmatrix} E \\
&=& 
E^* 
\begin{pmatrix} R_0 \r_0 R_0 & R_0\r_0 R_1 \\ R_1^*\r_0 R_0 & R_1^*\r_0 R_1  \end{pmatrix} 
E\\
&\le& 
\s
=E^* \begin{pmatrix} \s_0 & 0\\ 0 & \beta-\alpha^* \sigma_0^{-1} \alpha \end{pmatrix} E.
\end{eqnarray*}
Here, the inequality is due to \eqref{eqn:uniqueness1}. 
Let us denote the singular part $\t$ as
\[
 \t=\begin{pmatrix} 0 & 0 \\ 0 & \t_0 \end{pmatrix}
 =E^* \begin{pmatrix} 0 & 0 \\ 0 & \t_0 \end{pmatrix} E.
\]
Then the decomposition \eqref{eqn:uniqueness1} is equivalent to 
\begin{equation}\label{eqn:uniqueness1-1}
 \begin{pmatrix} \s_0 & 0\\ 0 & \beta-\alpha^* \sigma_0^{-1} \alpha \end{pmatrix} 
 = \begin{pmatrix} R_0 \r_0 R_0 & R_0\r_0 R_1 \\ R_1^*\r_0 R_0 & R_1^*\r_0 R_1  \end{pmatrix} 
 + \begin{pmatrix} 0 & 0 \\ 0 & \t_0 \end{pmatrix}.
\end{equation}
Comparison of the $(1,1)$th blocks of both sides yields 
$R_0=\s_0 \# \r_0^{-1}$. 
Since this $R_0$ is strictly positive, comparison of other blocks of \eqref{eqn:uniqueness1-1} further yields
\[
 R_1=0\quad\mbox{and}\quad \t_0=\beta-\alpha^* \sigma_0^{-1} \alpha.
\]
Consequently, the singular part $\t$ is uniquely determined by \eqref{eqn:LebDec1singular}. 
\end{proof}

An immediate consequence of Lemma \ref{lem:uniqueness1} is the following

\begin{cor}\label{cor:uniqueness1}
When $\s\gg\r$, the absolutely continuous part $\s^{ac}$ of the quantum Lebesgue decomposition \eqref{eqn:qLebesgue1}
is in fact mutually absolutely continuous to $\r$, i.e., $\s^{ac}\sim\r$.
\end{cor}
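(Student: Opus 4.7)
The plan is to complement the inclusion $\s^{ac}\ll\r$, which is part of the statement of Lemma~\ref{lem:uniqueness1}, by proving the reverse direction $\r\ll\s^{ac}$; mutual absolute continuity $\s^{ac}\sim\r$ then follows by the definition recalled in \eqref{eqn:mutuallyAC}.

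To establish $\r\ll\s^{ac}$, I would invoke the characterization in Lemma~\ref{lem:2}(d): it suffices to exhibit a positive operator $R'\ge 0$ satisfying $\r=R'\s^{ac}R'$. For this, I would revisit the explicit construction carried out just before Lemma~\ref{lem:uniqueness1}, in which the absolutely continuous part was written as $\s^{ac}=R\r R$ with
\[
R=E^*\begin{pmatrix} \s_0\#\r_0^{-1} & 0\\ 0 & I \end{pmatrix}E,
\qquad
E=\begin{pmatrix} I & \s_0^{-1}\a\\ 0 & I \end{pmatrix}.
\]
The hypothesis $\s\gg\r$ yields $\s_0>0$, so that $\s_0\#\r_0^{-1}>0$ and the inner block matrix is strictly positive; since $E$ is invertible, the operator $R$ itself is strictly positive, and in particular invertible. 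Setting $R':=R^{-1}>0$ and conjugating $\s^{ac}=R\r R$ by $R^{-1}$ on both sides gives $\r=R'\s^{ac}R'$, which is exactly condition (d) of Lemma~\ref{lem:2} applied to the pair $(\r,\s^{ac})$, yielding $\r\ll\s^{ac}$.

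I do not anticipate a real obstacle. The only observation beyond a direct appeal to Lemma~\ref{lem:2} is that the particular $R$ furnished by the Lebesgue decomposition is invertible---a strictly stronger property than the mere positivity $R\ge 0$ guaranteed by Lemma~\ref{lem:2}(d) in general---and this strict positivity is manifest from the block formula above, so the remainder of the argument is a one-line conjugation.
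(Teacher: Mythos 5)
Your proposal is correct and essentially coincides with the paper's own argument: the paper treats the corollary as immediate from Lemma \ref{lem:uniqueness1} and explicitly notes, in the remark following it, that the operator $R$ in the decomposition can be taken strictly positive, which is precisely your observation that $R=E^*\,\mathrm{diag}(\s_0\#\r_0^{-1},I)\,E>0$ is invertible, so that $\r=R^{-1}\s^{ac}R^{-1}$ yields $\r\ll\s^{ac}$ via Lemma \ref{lem:2} and hence $\s^{ac}\sim\r$.
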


Note that the operator $R_2$ appeared in the proof of Lemma \ref{lem:uniqueness1} is arbitrary as long as it is positive. 
Because of this arbitrariness, we can take the operator $R$ in \eqref{eqn:uniqueness1} to be strictly positive. 
This gives an alternative view of Corollary \ref{cor:uniqueness1}.

\subsection{Case 2:  generic case}

Let us extend the quantum Lebesgue decomposition \eqref{eqn:qLebesgue1} to a generic case when 
$\r$ is not necessarily absolutely continuous with respect to $\s$. 
When $\r$ and $\s$ are mutually singular, we just let $\s^{ac}=0$ and $\s^\perp=\s$. 
We therefore assume in the rest of this section that $\r$ and $\s$ are not mutually singular.

Given positive operators $\r$ and $\s$ that satisfy $\r\not\perp\s$, 
let $\H=\H_1\oplus\H_2\oplus\H_3$ be the orthogonal direct sum decomposition defined by
\[ 
 \H_1:=\ker\left(\sigma\!\!\downharpoonleft_{\supp\rho}\right),\qquad 
 \H_2:=\supp\left(\sigma\!\!\downharpoonleft_{\supp\rho}\right),\qquad 
 \H_3:=\ker\r. 
\]
Then $\r$ and $\s$ are represented in the form of block matrices as follows:
\begin{equation}\label{eqn:blockMatrix}
\rho=\begin{pmatrix} \r_2 & \r_1 & 0\\ \r_1^* & \r_0 & 0 \\ 0 & 0 & 0 \end{pmatrix}, 
\qquad
\sigma=\begin{pmatrix} 0 & 0 & 0 \\ 0 & \s_0 & \a \\ 0 & \a^* & \beta \end{pmatrix},
\end{equation}
where 
\[
\begin{pmatrix} \r_2 & \r_1 \\ \r_1^* & \r_0 \end{pmatrix}>0,\qquad \s_0>0.
\]
Note that when 
$\s \gg \r$ (Case 1), the subspace $\H_1$ becomes zero; in this case, 
the first rows and columns in \eqref{eqn:blockMatrix} should be ignored.
Likewise, when $\r>0$, the subspace $\H_3$ becomes zero; in this case, 
the third rows and columns in \eqref{eqn:blockMatrix} should be ignored.

\begin{figure}[t] 
	\begin{centering}
	\includegraphics[scale=0.3]{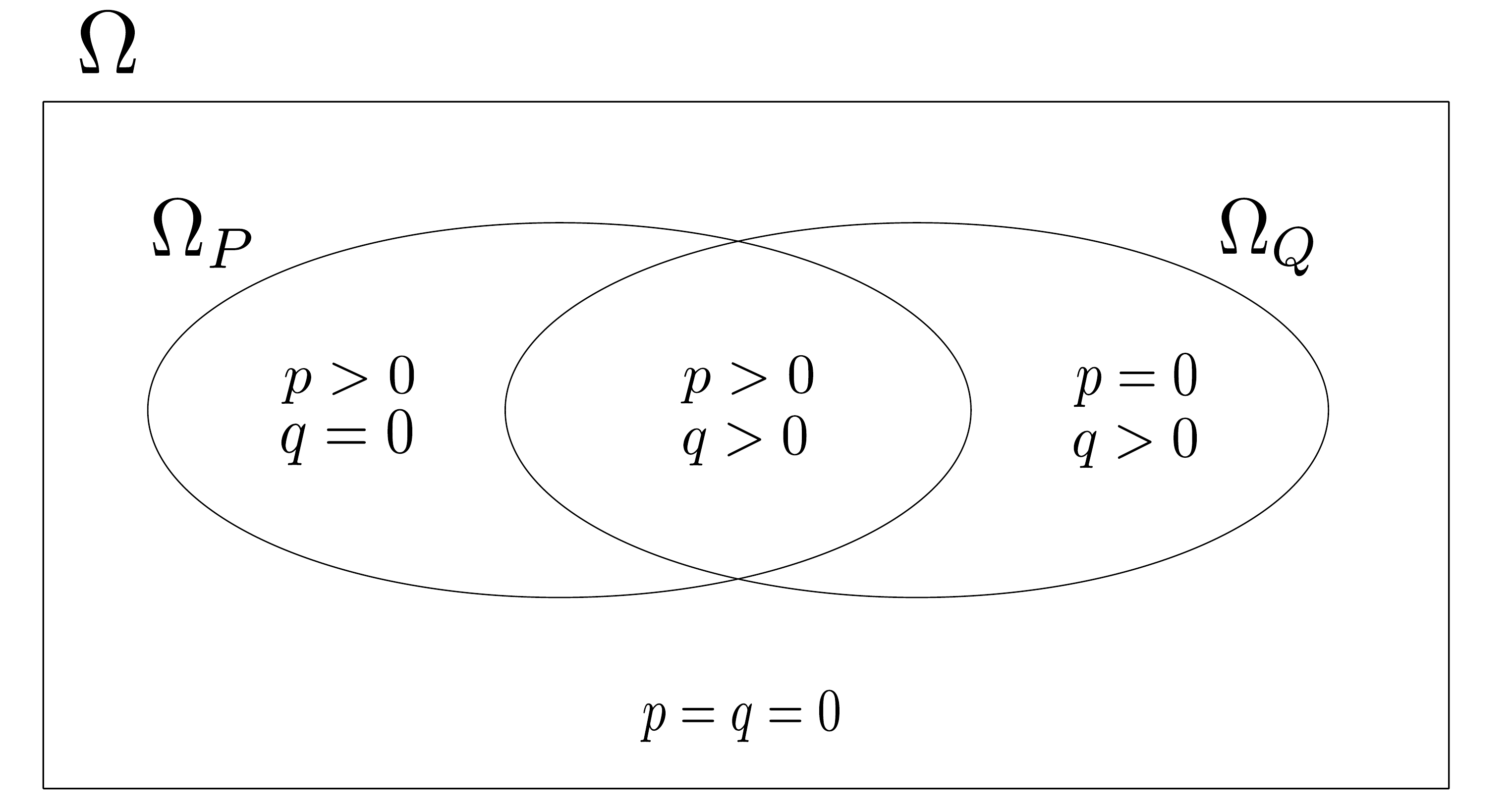}
	\par
	\end{centering}
	\caption{
	Schematic diagram of support sets of measures $P$ and $Q$ on a classical measure space 
	$(\O,\F,\m)$ having densities $p$ and $q$, respectively. 
	Here $\O_P:=\{\o\in\O\,|\, p(\o)>0\}$ and $\O_Q:=\{\o\in\O\,|\, q(\o)>0\}$. 
	The induced measures $Q^{ac}(A):=Q(A\cap \{p>0\})$ and $Q^\perp(A):=Q(A\cap \{p=0\})$ 
	give the Lebesgue decomposition $Q=Q^{ac}+Q^\perp$ with respect to $P$, 
	in which $Q^{ac}\ll P$ and $Q^\perp\perp P$, (cf. \cite[Chapter 6]{Vaart}).
	\label{fig:support}}
\end{figure}

There is an obvious similarity between the block matrix structure in \eqref{eqn:blockMatrix} 
and the diagram illustrated in Fig.~\ref{fig:support} that displays the support sets of two measures $P$ and $Q$ on a classical measure space 
$(\O,\F,\m)$ having densities $p$ and $q$, respectively. 
However,  it should be warned that
\[
 \H_1':=\supp\r\cap \ker\s,\qquad \H_2':=\supp\r\cap\supp\s
\]
are different from $\H_1$ and $\H_2$, respectively. 
This is most easily seen by considering the case when both $\r$ and $\s$ are pure states: 
for pure states $\r=\ket{\psi}\bra{\psi}$ and $\s=\ket{\xi}\bra{\xi}$, we see that $\H_2\neq \{0\}$ if and only if $\braket{\xi}{\psi}\neq 0$, (cf. \cite[Example 2.3]{YFG}), whereas $\H_2'\neq\{0\}$ if and only if $\r=\s$.

Let us rewrite $\s$ in the form
\[
\sigma
=E^*
\begin{pmatrix}
0 & 0 & 0\\
0 & \s_0 & 0 \\
0 & 0 & \beta-\alpha^* \sigma_0^{-1} \alpha
\end{pmatrix}
E,
\]
where
\[
E:=
\begin{pmatrix}
I & 0 & 0\\
0 & I & \s_0^{-1} \a \\
0 & 0 & I
\end{pmatrix}.
\]
Since $E$ is invertible and $\s\ge 0$, we see that
\[
\beta-\alpha^* \sigma_0^{-1} \alpha\ge 0.
\]
Now let 
\[
\s^{ac}
:=E^*
\begin{pmatrix} 0 & 0 & 0\\ 0 & \s_0 & 0 \\ 0 & 0 & 0\end{pmatrix}
E
=\begin{pmatrix} 0 & 0 & 0  \\ 0 & \s_0 & \a \\ 0 & \a^* & \a^* \s_0^{-1} \a \end{pmatrix}
\]
and let
\[
\s^\perp:=
E^*
\begin{pmatrix} 0 & 0 & 0 \\ 0 & 0 & 0\\ 0 & 0 & \b-\a^* \s_0^{-1} \a \end{pmatrix}
E
=\begin{pmatrix} 0 & 0 & 0 \\ 0 & 0 & 0\\ 0 & 0 & \b-\a^* \s_0^{-1} \a \end{pmatrix}.
\]
Then it is shown that $\s^{ac}\ll\r$ and $\s^\perp \perp \r$.
In fact, the latter is obvious from Lemma \ref{lem:1}. 
To prove the former, let
\[
R:=E^* \begin{pmatrix} 0 & 0 & 0 \\ 0 & \s_0 \# \r_0^{-1} & 0 \\ 0 & 0 & 0 \end{pmatrix} E.
\]
Then $R$ is a positive operator satisfying
\begin{eqnarray*}
R\r R
&= &
E^*
\begin{pmatrix} 0 & 0 & 0 \\ 0 & \s_0 \# \r_0^{-1} & 0 \\ 0 & 0 & 0 \end{pmatrix}
\begin{pmatrix} \r_2 & \r_1 & 0\\ \r_1^* & \r_0 & 0 \\ 0 & 0 & 0 \end{pmatrix}
\begin{pmatrix} 0 & 0 & 0 \\ 0 & \s_0 \# \r_0^{-1} & 0 \\ 0 & 0 & 0 \end{pmatrix}
E \\
&= &
E^*
\begin{pmatrix} 0 & 0 & 0\\ 0 & \s_0 & 0 \\ 0 & 0 & 0\end{pmatrix}
E
=
\s^{ac}.
\end{eqnarray*}
It then follows from Lemma \ref{lem:2} that $\s^{ac}\ll\r$.

In summary, given $\r$ and $\s$ that satisfy $\s\not\perp\r$, let 
\begin{equation}\label{eqn:simultaneousBlock}
\rho=\begin{pmatrix} \r_2 & \r_1 & 0\\ \r_1^* & \r_0 & 0 \\ 0 & 0 & 0 \end{pmatrix}, 
\qquad
\sigma=\begin{pmatrix} 0 & 0 & 0 \\ 0 & \s_0 & \a \\ 0 & \a^* & \beta \end{pmatrix}
\end{equation}
be their simultaneous block matrix representations relative to the aforementioned direct sum decomposition $\H=\H_1\oplus\H_2\oplus\H_3$.
Then
\begin{equation}\label{eqn:LebDec2}
\s^{ac}
=\begin{pmatrix} 0 & 0 & 0  \\ 0 & \s_0 & \a \\ 0 & \a^* & \a^* \s_0^{-1} \a \end{pmatrix},
\qquad
\s^\perp
=\begin{pmatrix} 0 & 0 & 0 \\ 0 & 0 & 0\\ 0 & 0 & \b-\a^* \s_0^{-1} \a \end{pmatrix}
\end{equation}
give the following decomposition:
\begin{equation}\label{eqn:LebesgueDecomp}
 \s=\s^{ac}+\s^\perp
 \qquad (\s^{ac}\ll \r,\;\s^\perp \perp \r)
\end{equation}
with respect to $\r$.

As in the previous subsection, we may call \eqref{eqn:LebesgueDecomp} a {\em quantum Lebesgue decomposition} for the following reasons. 
Firstly, 
although the simultaneous block representation \eqref{eqn:simultaneousBlock} has arbitrariness of unitary transformations of the form $U_1\oplus U_2\oplus U_3$,
the operators $\s^{ac}$ and $\s^\perp$ are well-defined 
because the matrices \eqref{eqn:LebDec2} are covariant under those unitary transformations.
Secondly, 
the decomposition \eqref{eqn:LebesgueDecomp} is unique, as the following lemma asserts. 

\begin{lem}\label{lem:uniqueness2}
Given $\r$ and $\s$ with $\s\not\perp\r$, 
the decomposition 
\[ \s=\s^{ac}+\s^\perp\qquad (\s^{ac} \ll \r,\;\s^\perp \perp \r) \]
is uniquely given by \eqref{eqn:LebDec2}.
\end{lem}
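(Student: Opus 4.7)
The plan is to adapt the proof of Lemma \ref{lem:uniqueness1} to the $3 \times 3$ block setting dictated by the orthogonal direct sum $\H = \H_1 \oplus \H_2 \oplus \H_3$. Suppose $\sigma = \tilde\sigma^{ac} + \tilde\sigma^\perp$ is any decomposition satisfying $\tilde\sigma^{ac} \ll \rho$ and $\tilde\sigma^\perp \perp \rho$. By Lemma \ref{lem:1}, $\tilde\sigma^\perp \perp \rho$ is equivalent to $\supp \tilde\sigma^\perp \subseteq (\supp \rho)^\perp = \H_3$, so
\[
\tilde\sigma^\perp = \begin{pmatrix} 0 & 0 & 0 \\ 0 & 0 & 0 \\ 0 & 0 & \tau_0 \end{pmatrix}
\]
for some $\tau_0 \ge 0$. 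Furthermore, by Lemma \ref{lem:2}~(a)$\Leftrightarrow$(d), the assumption $\tilde\sigma^{ac} \ll \rho$ lets us write $\tilde\sigma^{ac} = R \rho R$ for some $R \ge 0$. It therefore suffices to show that $R \rho R$ and $\tau_0$ are uniquely determined by the equation $R \rho R + \tilde\sigma^\perp = \sigma$.

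Grouping $\H = (\H_1 \oplus \H_2) \oplus \H_3$, set $P := \begin{pmatrix} \rho_2 & \rho_1 \\ \rho_1^* & \rho_0 \end{pmatrix} > 0$ and write $R = \begin{pmatrix} R_A & R_B \\ R_B^* & R_{33} \end{pmatrix}$. Since $\rho$ has vanishing third row and column, a direct computation gives
\[
R \rho R = \begin{pmatrix} R_A P R_A & R_A P R_B \\ R_B^* P R_A & R_B^* P R_B \end{pmatrix},
\]
and blockwise comparison with $\sigma - \tilde\sigma^\perp$ yields the three relations
\[
R_A P R_A = \begin{pmatrix} 0 & 0 \\ 0 & \sigma_0 \end{pmatrix}, \qquad R_A P R_B = \begin{pmatrix} 0 \\ \alpha \end{pmatrix}, \qquad R_B^* P R_B = \beta - \tau_0.
\]

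Since $R \ge 0$, its principal submatrix $R_A$ is positive semidefinite, and combined with $P > 0$ this gives $\mathrm{range}(R_A P R_A) = \mathrm{range}(R_A)$. The first relation therefore forces $\mathrm{range}(R_A) = \H_2$, so $R_A = \begin{pmatrix} 0 & 0 \\ 0 & c \end{pmatrix}$ with $c > 0$ on $\H_2$ satisfying $c \rho_0 c = \sigma_0$; hence $c = \rho_0^{-1} \# \sigma_0$ is uniquely determined. Writing $R_B = \begin{pmatrix} r_1 \\ r_2 \end{pmatrix}$, positive semidefiniteness of the $\H_1 \oplus \H_3$ principal submatrix of $R$ --- whose $\H_1$-diagonal block now vanishes --- forces $r_1 = 0$ by the standard fact that a PSD block matrix with a zero diagonal block has vanishing off-diagonal blocks. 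The cross relation then reduces to $c \rho_0 r_2 = \alpha$, yielding the unique solution $r_2 = \rho_0^{-1} c^{-1} \alpha$, and a short calculation using $c^{-1} \rho_0^{-1} c^{-1} = (c \rho_0 c)^{-1} = \sigma_0^{-1}$ gives $R_B^* P R_B = r_2^* \rho_0 r_2 = \alpha^* \sigma_0^{-1} \alpha$. Consequently $\tau_0 = \beta - \alpha^* \sigma_0^{-1} \alpha$, and $R \rho R$ coincides with the $\sigma^{ac}$ in \eqref{eqn:LebDec2}, establishing uniqueness.

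The main obstacle is the block-matrix bookkeeping: positivity of $R$ has to be invoked twice --- first to read off the shape of $R_A$ from that of $R_A P R_A$, and then to eliminate the $\H_1$-component of $R_B$ --- so as to prevent any leakage between the orthogonal summands $\H_1,\H_2,\H_3$. Once these structural constraints are in place, the quantum geometric mean cleanly solves $c \rho_0 c = \sigma_0$, exactly as in the proof of Lemma \ref{lem:uniqueness1}, to which the present argument specializes when $\H_1 = \{0\}$.
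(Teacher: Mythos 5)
Your proof is correct and follows essentially the same route as the paper: both write the absolutely continuous part as $R\r R$ via Lemma \ref{lem:2}, exploit the positivity of $R$ (together with the vanishing of $\s$ and $\t$ on $\H_1$) to force the $\H_1$ row and column of $R$ to vanish, and then pin down the remaining blocks through the Riccati equation $c\r_0 c=\s_0$ solved uniquely by the operator geometric mean. The only cosmetic difference is that you carry out the reduced two-block computation explicitly, whereas the paper at that point simply invokes Lemma \ref{lem:uniqueness1}.
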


\begin{proof}
We show that the decomposition
\begin{equation}\label{eqn:uniqueness2}
 \s=R\r R+\t\qquad (R\ge 0,\,\t\ge 0,\; \t \perp \r)
\end{equation}
is unique.
Because of Lemma \ref{lem:uniqueness1}, it suffices to treat the case when $\s\not\gg\r$, that is, when $\H_1\neq \{0\}$. 

Let $\r$ and $\s$ be represented as \eqref{eqn:simultaneousBlock}. 
It then follows from \eqref{eqn:uniqueness2} that, for any $x\in \H_1$, 
\[
 0=\bracket{x}{\s x}\ge \bracket{x}{R \r Rx}=\bracket{Rx}{\r Rx}.
\]
This implies that $Rx\in\ker\r\,(=\H_3)$: in particular, 
$\bracket{x}{Rx}=0$, so that 
the $(1,1)$th block of $R$ is zero. 
This fact, combined with the positivity of $R$, entails that $R$ must have the form 
\[
R=
\begin{pmatrix} 
0 & 0 & 0 \\ 
0 & R_0 & R_1 \\ 
0 & R_1^* & R_2  
\end{pmatrix}.
\]
Consequently, the problem is reduced to finding the decomposition
\begin{equation}\label{eqn:uniqueness2-1}
 \hat\s=\hat R \hat\r \hat R+\hat\t\qquad (\hat R\ge 0,\,\hat\t\ge 0,\; \hat\t \perp \hat\r),
\end{equation}
where
\[
\hat\r=
\begin{pmatrix} 
\r_0 & 0 \\ 
0 & 0  
\end{pmatrix},
\qquad 
\hat\s=
\begin{pmatrix} 
\s_0 & \a \\ 
\a^* & \b  
\end{pmatrix},
\qquad 
\hat R=
\begin{pmatrix} 
R_0 & R_1 \\ 
R_1^* & R_2 
\end{pmatrix}.
\]
Since $\hat{\r} \ll \hat\s$, the uniqueness of the decomposition \eqref{eqn:uniqueness2-1} immediately follows from Lemma \ref{lem:uniqueness1}. 
This completes the proof. 
\end{proof}

Now that a quantum Lebesgue decomposition is established, we shall call the operator $R$ satisfying  \eqref{eqn:uniqueness2} the {\em square-root likelihood ratio} of $\s$ relative to $\r$, and shall denote it as $\ratio{\s}{\r}$.

\begin{rem}\label{rem:eqrtLikelihood}
The square-root likelihood ratio $R=\ratio{\s}{\r}$ is explicitly written as
\begin{equation}\label{eqn:remark}
R=\sqrt{\s} \left(\sqrt{\sqrt{\s} \r \sqrt{\s}} \right)^+ \sqrt{\s} +\ga,
\end{equation}
where $A^+$ denotes the generalized inverse of an operator $A$, and $\ga$ is an arbitrary positive operator that is singular with respect to $\rho$. 
The proof is given in Appendix \ref{sec:proofs}.
\end{rem}

\section{Contiguity}\label{sec:contiguity}

As we have seen in Introduction, the asymptotic version of absolute continuity called the contiguity played an important role in classical statistics \cite{{LeCam:1966},{LeCam:Book},{Vaart}}. 
In this section, we extend it to the quantum domain. 
There are several equivalent characterizations of the contiguity.
Among others, the following characterization is particularly relevant to our purpose because it makes no use of the notion of measurable sets that are characteristic of classical measure theory.
Let $P^{(n)}$ and $Q^{(n)}$ be sequences of probability measures on measurable spaces $(\Omega^{(n)}, \F^{(n)})$.
Then $Q^{(n)}$ is contiguous with respect to $P^{(n)}$
if and only if the sequence 
${dQ^{(n)}}/{dP^{(n)}}$
of likelihood ratios is uniformly integrable under $P^{(n)}$, and 
$\lim_{n\to\infty}  E_{P^{(n)}}\left[{dQ^{(n)}}/{dP^{(n)}} \right]=1$, (cf. \cite[Lemma V.1.10]{JacodShiryaev}). 

Let $\H^{(n)}$ be a sequence of finite dimensional Hilbert spaces, and let $\rho^{(n)}$ and $\sigma^{(n)}$ be quantum states on $\H^{(n)}$.
Further, let $R^{(n)}$ be (a version of) the square-root likelihood ratio $\ratio{\s^{(n)}}{\r^{(n)}}$.
Motivated by the above consideration, one may envisage that the sequence $\s^{(n)}$ could be designated as ``contiguous'' with respect to $\r^{(n)}$ if
\begin{itemize}
\item[(i)] $\displaystyle \lim_{n\to\infty} \Tr \rho^{(n)} R^{{(n)}^2}=1$, and
\item[(ii)] the sequence $R^{{(n)}^2}$ is uniformly integrable under $\r^{(n)}$; that is, for any $\varepsilon>0$ there exist an $M>0$ such that
\begin{equation*}
 \sup_{n} \Tr \r^{(n)} 
 R^{{(n)}^2} 
 \left( I-\mathbbm{1}_M(R^{(n)}) \right)
  < \varepsilon. 
\end{equation*}
Here, $ \mathbbm{1}_M$ is the truncation function: 
\[
 \mathbbm{1}_M(x)
 =\begin{cases}
	1,&  \text{if $|x|\leq M$}\\
	0, &  \text{otherwise}.
 \end{cases}
\]
In other words,  the operator $\mathbbm{1}_M(R^{(n)})$ is the orthogonal projection onto the subspace of $\H^{(n)}$ spanned by the eigenvectors of $R^{(n)}$ corresponding to the eigenvalues less than or equal to $M$. 
\end{itemize}

However, such a naive definition fails, as the following example demonstrates. 

\begin{example}\label{ex:pseudoLikelihood}
Let 
\[
\rho^{(n)}=\frac{1}{2n^3}\begin{pmatrix} 2n^3-1 & 0 \\ 0 & 1 \end{pmatrix},\qquad
\sigma^{(n)}=\frac{1}{2(n^2+n+1)}\begin{pmatrix}n^2 & n^2+1 \\ n^2+1 & n^2+2n+2\end{pmatrix}
\]
be sequences of faithful states on a fixed Hilbert space $\H^{(n)}=\C^2$.
For all $n\in\N$, they are mutually absolutely continuous. 
Moreover, the limiting states
\[
\rho^{(\infty)}=\begin{pmatrix} 1 & 0 \\ 0 & 0 \end{pmatrix},\qquad
\sigma^{(\infty)}=\frac{1}{2}\begin{pmatrix}  1 & 1 \\ 1 & 1 \end{pmatrix}
\]
are also mutually absolutely continuous since they are non-orthogonal pure states. 
Therefore, one would expect that $\rho^{(n)}$ and $\sigma^{(n)}$ should be contiguous.
However, this does not follow from the above naive definition.
In fact, the square-root likelihood ratio $R^{(n)}=\ratio{\s^{(n)}}{\r^{(n)}}$ is uniquely given by
\[
R^{(n)}=\frac{n}{\sqrt{2(n^2+n+1)}}\begin{pmatrix}1 & 1 \\ 1 & 2n+1 \end{pmatrix}.
\]
Therefore, for any $M> {1}/{\sqrt{2}}$,
\[
 \lim_{n\to\infty}\mathbbm{1}_M(R^{(n)})=\begin{pmatrix} 1 & 0 \\ 0 & 0 \end{pmatrix},
\]
and
\[
 \lim_{n\to\infty} \Tr \r^{(n)} R^{{(n)}^2} \left( I-\mathbbm{1}_M(R^{(n)}) \right)
 =\Tr \sigma^{(\infty)} \begin{pmatrix} 0 & 0 \\ 0 & 1 \end{pmatrix}
 =\frac{1}{2}. 
\]
Namely, $R^{{(n)}^2}$ is not uniformly integrable under $\r^{(n)}$.
	
The above strange phenomenon stems from the fact that the $(2,2)$th entry of the square-root likelihood ratio $R^{(n)}$ diverges as $n\to\infty$, 
although this entry is asymptotically inessential in that it corresponds to the singular part of the limiting reference state $\rho^{(\infty)}$. 
In other words, this divergence might be illusory in discussing the asymptotic behaviour. 
This observation may lead us to a ``modified'' positive operator 
\[
 \overline{R}^{(n)}
 =\frac{n}{\sqrt{2(n^2+n+1)}}\begin{pmatrix}1 & 1 \\ 1 & 1 \end{pmatrix}
\]
which would contain essential information about asymptotic relationship between $\r^{(n)}$ and $\s^{(n)}$.
In fact, 
\[
\overline{R}^{(n)}\rho^{(n)}\overline{R}^{(n)}
=\frac{1}{2(n^2+n+1)}\begin{pmatrix}n^2 & n^2 \\ n^2 & n^2 \end{pmatrix} 
\]
approaches $\sigma^{(\infty)}$ as $n\to \infty$, and the sequence $\overline{R}^{{(n)}^2}$ is uniformly integrable under $\rho^{(n)}$.
\end{example}

In order to formulate the idea presented in Example \ref{ex:pseudoLikelihood}, we introduce a class of modifications that is asymptotically negligible. 
We say a sequence $O^{(n)}$ of observables is {\em infinitesimal in $L^2$} (or simply {\em $L^2$-infinitesimal}) under $\rho^{(n)}$, denoted $O^{(n)}=o_{L^2}(\r^{(n)})$,  if 
\[
 \lim_{n\to\infty}\Tr\rho^{(n)}O^{(n)^{2}}=0.
\]
It is easily verified that in Example \ref{ex:pseudoLikelihood}, the operator 
$O^{(n)}:= \overline{R}^{(n)}-{R}^{(n)}$ is $L^2$-infinitesimal under $\rho^{(n)}$. 

Now we introduce a quantum extension of the contiguity. 

\begin{defn}\label{def:qcontiguity}
Let $\H^{(n)}$ be a sequence of finite dimensional Hilbert spaces, and let $\rho^{(n)}$ and $\sigma^{(n)}$ be quantum states on $\H^{(n)}$.
Further, let $R^{(n)}$ be (a version of) the square-root likelihood ratio $\ratio{\s^{(n)}}{\r^{(n)}}$.
The sequence $\s^{(n)}$ is {\em contiguous} with respect to the sequence $\r^{(n)}$, denoted
$\sigma^{(n)}\vartriangleleft\rho^{(n)}$, if 
\begin{itemize}
\item[(i)] $\lim_{n\to\infty} \Tr \rho^{(n)} R^{{(n)}^2}=1$, and 
\item[(ii)] there is an $L^2$-infinitesimal sequence $O^{(n)}$ of observables, each defined on $\H^{(n)}$, such that $\overline{R}^{(n)}:=R^{(n)}+O^{(n)}$ is positive and $\overline{R}^{{(n)}^2}$ is uniformly integrable under $\r^{(n)}$.
\end{itemize}
We also use the notation $\sigma^{(n)}\vartriangleleft_{O^{(n)}}\rho^{(n)}$ when $O^{(n)}$ needs to be specified.
\end{defn}

Several remarks are in order. 
Firstly, the above definition is independent of the choice of the square-root likelihood ratio $R^{(n)}$, since its arbitrariness (see Remark \ref{rem:eqrtLikelihood}) does not affect condition (i), and is absorbed into the $L^2$-infinitesimal modification $O^{(n)}$ in condition (ii). 
Secondly,
condition (i) and the uniform integrability in (ii) can be merged into a single condition
\[
\lim_{M\to\infty}\liminf_{n\to\infty}\Tr\rho^{(n)} \mathbbm{1}_{M}\left(\overline{R}^{(n)}\right)\overline{R}^{(n)^{2}}=1
\]
or
\[
\lim_{M\to\infty}\liminf_{n\to\infty}
\Tr\s^{(n)^{ac}}
\mathbbm{1}_{M}\left( \overline{R}^{(n)} \right)=1.
\]
Here, $\s^{(n)^{ac}}=R^{(n)} \r^{(n)} R^{(n)}$ is the absolutely continuous part of $\s^{(n)}$ 
with respect to $\r^{(n)}$. 
Thirdly, the definition is unitarily covariant, in that 
\[
\sigma^{(n)} \vartriangleleft_{O^{(n)}} \rho^{(n)}
\quad \mbox{if and only if} \quad
U^{(n)} \sigma^{(n)} U^{(n)*} \vartriangleleft_{U^{(n)} O^{(n)} U^{(n)*}} 
U^{(n)} \rho^{(n)} U^{(n)*},
\]
where $U^{(n)}$ is an arbitrary unitary operator on $\H^{(n)}$.
This fact could be useful in representing a state in a matrix form.
Fourthly, the positivity of $\overline{R}^{(n)}$ can be replaced with an asymptotic positivity; 
that is, the negative part of $\overline{R}^{(n)}$ is $L^2$-infinitesimal under $\rho^{(n)}$. 
However, the positivity of $\overline{R}^{(n)}$, whether asymptotically or not, is indispensable as the following example illustrates. 

\begin{example}\label{exa:positivity}
Let
\[
\rho^{(n)}=\begin{pmatrix}1 & 0 \\ 0 & 0 \end{pmatrix},\qquad
\sigma^{(n)}=\frac{1}{1+n^{2}}\begin{pmatrix}1 & n \\ n & n^{2} \end{pmatrix}
\]
be sequences of pure states on $\H^{(n)}=\C^2$.  
The square-root likelihood ratio $\ratio{\s^{(n)}}{\r^{(n)}}$ is given by
\[
R^{(n)}=\frac{1}{\sqrt{1+n^{2}}}\begin{pmatrix}1 & n \\ n & n^{2}+\gamma \end{pmatrix},
\]
where $\gamma$ is an arbitrary nonnegative number.
Now let
\[
O^{(n)}=\frac{1}{\sqrt{1+n^{2}}}\begin{pmatrix}0 & 0 \\ 0 & -n^{2} \end{pmatrix}
\]
and let $\overline{R}^{(n)}=R^{(n)}+O^{(n)}$. 
Then $\overline{R}^{(n)}$ is uniformly bounded, and conditions (i) and (ii) in Definition \ref{def:qcontiguity}, except the positivity of $\overline{R}^{(n)}$, are fulfilled. However, the limiting states
\[
\rho^{(\infty)}=\begin{pmatrix}1 & 0 \\ 0 & 0 \end{pmatrix},\qquad
\sigma^{(\infty)}=\begin{pmatrix} 0 & 0 \\ 0 & 1 \end{pmatrix}
\]
are mutually singular. 
\end{example}

The validity of Definition \ref{def:qcontiguity} is demonstrated by the following

\begin{thm}\label{thm:contiguity_ac}
Let $\rho^{(n)}$ and $\sigma^{(n)}$ be sequences of quantum states on a fixed finite dimensional Hilbert space $\H$, and suppose that they have the limiting states $\lim_{n\to\infty}\rho^{(n)}=\rho^{(\infty)}$ and $\lim_{n\to\infty}\sigma^{(n)}=\sigma^{(\infty)}$. 
Then $\sigma^{(n)}\vartriangleleft\rho^{(n)}$ if and only if $\sigma^{(\infty)}\ll\rho^{(\infty)}$.
\end{thm}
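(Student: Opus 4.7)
The plan is to prove each implication separately, leveraging the characterization $\sigma \ll \rho \iff \exists R\ge 0$ with $\sigma = R\rho R$ from Lemma \ref{lem:2}(d) together with the fact that $\H$ is fixed and finite-dimensional, so that all reasonable notions of convergence for bounded operators coincide. Throughout, I would work in the simultaneous block decomposition $\H = \supp\rho^{(\infty)} \oplus \ker\rho^{(\infty)}$, writing every operator as a $2\times 2$ block.

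For the direction $\sigma^{(\infty)}\ll\rho^{(\infty)} \Rightarrow \sigma^{(n)}\vartriangleleft\rho^{(n)}$, Lemma \ref{lem:2}(d) furnishes a bounded $R^{(\infty)}\ge 0$, which I may take supported on $\supp\rho^{(\infty)}$, with $\sigma^{(\infty)} = R^{(\infty)}\rho^{(\infty)}R^{(\infty)}$. I would then pick a representative $R^{(n)} = \ratio{\sigma^{(n)}}{\rho^{(n)}}$ using the formula in Remark \ref{rem:eqrtLikelihood}, and choose the $L^2$-infinitesimal correction $O^{(n)}$ so as to kill precisely those blocks of $R^{(n)}$ acting on the asymptotically null subspace $\ker\rho^{(\infty)}$, so that $\overline{R}^{(n)} := R^{(n)} + O^{(n)}$ is uniformly bounded and converges in norm to $R^{(\infty)}$. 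Condition (i) of Definition \ref{def:qcontiguity} then rewrites as $\Tr\sigma^{(n)^{ac}} \to 1$, which follows from $\Tr\sigma^{(n)} \to 1$ together with the fact that $\sigma^{(\infty)}\ll\rho^{(\infty)}$ prevents the emergence of a singular part in the limit. Condition (ii) reduces to the uniform operator-norm boundedness of $\overline{R}^{(n)}$.

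For the converse, assume $\sigma^{(n)}\vartriangleleft_{O^{(n)}}\rho^{(n)}$ and set $\overline{R}^{(n)} = R^{(n)} + O^{(n)}$. Condition (i) yields $\Tr\sigma^{(n)^{\perp}} = 1 - \Tr\rho^{(n)} R^{(n)^{2}} \to 0$, and hence $\sigma^{(n)^{\perp}} \to 0$ in norm (positivity together with trace tending to zero in finite dimension), so $\sigma^{(n)^{ac}} \to \sigma^{(\infty)}$. A Cauchy--Schwarz bound exploiting $\Tr\rho^{(n)} O^{(n)^{2}} \to 0$ and $\sup_n \Tr\rho^{(n)} R^{(n)^{2}} < \infty$ lets me replace $R^{(n)}$ by $\overline{R}^{(n)}$ asymptotically in the quadratic form, yielding $\overline{R}^{(n)}\rho^{(n)}\overline{R}^{(n)} \to \sigma^{(\infty)}$. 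The uniform integrability in (ii) then allows me to extract a subsequence along which $\overline{R}^{(n)}$ converges, on the blocks adapted to $\supp\rho^{(\infty)}$, to some $R^{(\infty)} \ge 0$; the action of $\overline{R}^{(n)}$ on $\ker\rho^{(\infty)}$ is rendered immaterial because $\rho^{(n)}$ vanishes there in the limit. Passing to the limit in the quadratic expression gives $\sigma^{(\infty)} = R^{(\infty)}\rho^{(\infty)}R^{(\infty)}$, so $\sigma^{(\infty)}\ll\rho^{(\infty)}$ by Lemma \ref{lem:2}(d).

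The main obstacle is the converse direction: since $\overline{R}^{(n)}$ need not be operator-norm bounded --- Example \ref{ex:pseudoLikelihood} already sits at this boundary --- the subsequential limit must be extracted from a $\rho^{(n)}$-weighted object, and the quantum uniform integrability is precisely what prevents the pathological escape of mass that would otherwise invalidate the limiting identity. Making the block-by-block extraction rigorous, and showing that cross-terms between the good blocks and any potentially divergent $\ker\rho^{(\infty)}$-block vanish in the relevant quadratic forms, is the technical heart of the argument.
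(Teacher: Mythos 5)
The central flaw is in your ``if'' direction. You assert that Lemma \ref{lem:2}(d) furnishes $R^{(\infty)}\ge 0$ with $\sigma^{(\infty)}=R^{(\infty)}\rho^{(\infty)}R^{(\infty)}$ which ``may be taken supported on $\supp\rho^{(\infty)}$''. This is false, and it is exactly the point where quantum absolute continuity departs from the classical notion: $\sigma^{(\infty)}\ll\rho^{(\infty)}$ does \emph{not} imply $\supp\sigma^{(\infty)}\subset\supp\rho^{(\infty)}$. In the limit of Example \ref{ex:pseudoLikelihood} one has $\rho^{(\infty)}=\begin{pmatrix}1&0\\0&0\end{pmatrix}$ and $\sigma^{(\infty)}=\frac12\begin{pmatrix}1&1\\1&1\end{pmatrix}$, and any $R\ge 0$ with $\sigma^{(\infty)}=R\rho^{(\infty)}R$ must satisfy $Re_1=\frac{1}{\sqrt2}(e_1+e_2)$, so it cannot be supported on $\supp\rho^{(\infty)}$ (the canonical choice in Lemma \ref{lem:2} is supported on $\supp\sigma^{(\infty)}$). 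Consequently your recipe for $O^{(n)}$ --- killing the blocks of $R^{(n)}$ that touch $\ker\rho^{(\infty)}$ --- fails on either reading: killing the off-diagonal block is not $L^2$-infinitesimal (in Example \ref{ex:pseudoLikelihood} it costs $\Tr\rho^{(n)}O^{(n)^2}\to\frac12$, since that block is weighted by the non-vanishing part of $\rho^{(n)}$ on $\supp\rho^{(\infty)}$), while killing only the corner block destroys the positivity of $\overline{R}^{(n)}$ required by Definition \ref{def:qcontiguity} (and shown indispensable by Example \ref{exa:positivity}). The paper's construction is different in kind: it truncates \emph{spectrally} in $Q^{(n)}:=\sqrt{\sqrt{\sigma^{(n)}}\rho^{(n)}\sqrt{\sigma^{(n)}}}$ below a $\lambda$ lying in the spectral gap of $Q^{(\infty)}$ above $0$, sets $O^{(n)}=\sqrt{\sigma^{(n)}}\,\mathbbm{1}_\lambda(Q^{(n)})\,Q^{(n)^{+}}\sqrt{\sigma^{(n)}}$, and the hypothesis $\sigma^{(\infty)}\ll\rho^{(\infty)}$ enters precisely to make this $L^2$-infinitesimal (via $\Tr\sigma^{(\infty)}\mathbbm{1}_0(Q^{(\infty)})=\Tr\sigma^{(\infty)\perp}=0$), while uniform boundedness comes for free from $\overline{R}^{(n)}\le\lambda^{-1}\sigma^{(n)}\le\lambda^{-1}$. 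Your claim that condition (i) holds because absolute continuity ``prevents the emergence of a singular part in the limit'' likewise needs a proof (the Lebesgue decomposition is not jointly continuous in $(\rho,\sigma)$); in the paper it is deduced from $\overline{R}^{(n)}\to R^{(\infty)}$ together with $\Tr\rho^{(\infty)}R^{(\infty)^2}=\Tr\sigma^{(\infty)}=1$.

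Your converse direction is closer in spirit to the paper's (use (i) to kill the singular part, Cauchy--Schwarz to trade $R^{(n)}$ for $\overline{R}^{(n)}$, extract a limit, finish with Lemma \ref{lem:2}(d)), but the step you yourself call the technical heart is attacked with the wrong decomposition. The possibly divergent part of $\overline{R}^{(n)}$ is not localized in the blocks touching $\ker\rho^{(\infty)}$, the spatial $(1,1)$ block is not automatically uniformly bounded, and the limiting operator must act nontrivially into $\ker\rho^{(\infty)}$ (again Example \ref{ex:pseudoLikelihood}), so its action there is not ``immaterial''. The paper instead splits $\overline{R}^{(n)}$ according to its \emph{own} spectrum into a part $A^{(n)}$ with uniformly bounded eigenvalues and a remainder $B^{(n)}$, uses uniform integrability to get $\Tr\rho^{(\infty)}A_{(\infty)}^{2}=1$ along a subsequence and $B^{(n_k)}=o_{L^2}(\rho^{(n_k)})$, passes to the limit in quadratic forms $\bracket{x}{\,\cdot\,x}$ to obtain only the inequality $\sigma^{(\infty)}\ge A_{(\infty)}\rho^{(\infty)}A_{(\infty)}$ from $\sigma^{(n)}\ge R^{(n)}\rho^{(n)}R^{(n)}$, and then upgrades to equality via the trace identity. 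Your sketch would give equality only if the full convergence $\overline{R}^{(n)}\rho^{(n)}\overline{R}^{(n)}\to R^{(\infty)}\rho^{(\infty)}R^{(\infty)}$ were established, which is precisely what is missing.
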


When the reference states $\rho^{(n)}$ are pure, there is a simple criterion for the contiguity. 

\begin{thm}\label{thm:contiguity_pure}
Let $\H^{(n)}$ be a sequence of finite dimensional Hilbert spaces, and let $\rho^{(n)}$ and $\sigma^{(n)}$ be quantum states on $\H^{(n)}$.
Suppose that $\rho^{(n)}$ is pure for all $n\in\N$. 
Then $\sigma^{(n)}\vartriangleleft\rho^{(n)}$ if and only if $\lim_{n\to\infty} \Tr\rho^{(n)}R^{(n)^{2}}=1$ and $\liminf_{n\to\infty}\Tr\rho^{(n)}\sigma^{(n)}>0$, where $R^{(n)}$ is 
(a version of) the square-root likelihood ratio $\ratio{\sigma^{(n)}}{\rho^{(n)}}$.
\end{thm}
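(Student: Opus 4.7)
The plan is to use the rank-one structure of $\r^{(n)}=\ket{\psi^{(n)}}\bra{\psi^{(n)}}$ to reduce both conditions to the single scalar $s_0^{(n)}:=\Tr\r^{(n)}\s^{(n)}$. For each $n$, pick an orthonormal basis of $\H^{(n)}$ with $\ket{\psi^{(n)}}$ as first vector, so that
\[
\r^{(n)}=\begin{pmatrix} 1 & 0\\ 0 & 0\end{pmatrix},\qquad
\s^{(n)}=\begin{pmatrix} s_0^{(n)} & \a^{(n)}\\ \a^{(n)*} & \b^{(n)}\end{pmatrix}.
\]
Throwing away the trivial subsequence on which $s_0^{(n)}=0$ (where $\Tr\r^{(n)}R^{{(n)}^{2}}=0$ and both sides of the claim fail), a direct computation from Remark \ref{rem:eqrtLikelihood} yields
\[
R^{(n)}=\begin{pmatrix} \sqrt{s_0^{(n)}} & s_0^{{(n)}^{-1/2}}\a^{(n)}\\ s_0^{{(n)}^{-1/2}}\a^{(n)*} & s_0^{{(n)}^{-3/2}}\a^{(n)*}\a^{(n)}+\gamma_0^{(n)}\end{pmatrix}
\]
for some free parameter $\gamma_0^{(n)}\ge 0$. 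The key observation is that $\bra{\psi^{(n)}}R^{(n)}\ket{\psi^{(n)}}=\sqrt{s_0^{(n)}}$ independently of $\gamma_0^{(n)}$.

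For the sufficiency direction, assume (A) $\Tr\r^{(n)}R^{{(n)}^{2}}\to 1$ and (B) $\liminf_n s_0^{(n)}>0$. I would take the minimal representative $R^{(n)}=R_0^{(n)}:=s_0^{{(n)}^{-3/2}}\s^{(n)}\ket{\psi^{(n)}}\bra{\psi^{(n)}}\s^{(n)}$, a rank-one positive operator whose unique nonzero eigenvalue equals $s_0^{{(n)}^{-3/2}}\bra{\psi^{(n)}}\s^{{(n)}^{2}}\ket{\psi^{(n)}}\le s_0^{{(n)}^{-1/2}}$, where I use $\s^{{(n)}^{2}}\le \s^{(n)}$. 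By (B) these eigenvalues are uniformly bounded, so choosing $O^{(n)}=0$ produces $\overline{R}^{(n)}:=R_0^{(n)}\ge 0$ of uniformly bounded operator norm; consequently $\overline{R}^{{(n)}^{2}}$ is trivially uniformly integrable under $\r^{(n)}$, condition (i) of Definition \ref{def:qcontiguity} is just (A), and the contiguity $\s^{(n)}\vartriangleleft\r^{(n)}$ follows.

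For the necessity direction, suppose $\s^{(n)}\vartriangleleft_{O^{(n)}}\r^{(n)}$ and set $\overline{R}^{(n)}:=R^{(n)}+O^{(n)}\ge 0$. Cauchy-Schwarz together with $O^{(n)}=o_{L^{2}}(\r^{(n)})$ give $|\bra{\psi^{(n)}}O^{(n)}\ket{\psi^{(n)}}|^{2}\le \Tr\r^{(n)}O^{{(n)}^{2}}\to 0$, so
\[
\bra{\psi^{(n)}}\overline{R}^{(n)}\ket{\psi^{(n)}}=\sqrt{s_0^{(n)}}+o(1).
\]
The heart of the matter is then to bound this number from below using uniform integrability. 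Diagonalise $\overline{R}^{(n)}=\sum_i \lambda_i^{(n)}\ket{e_i^{(n)}}\bra{e_i^{(n)}}$ with $\lambda_i^{(n)}\ge 0$ (here positivity is essential) and expand $\ket{\psi^{(n)}}=\sum_i c_i^{(n)}\ket{e_i^{(n)}}$. Fixing $\varepsilon>0$, uniform integrability supplies $M$ with $\sup_n\sum_{\lambda_i^{(n)}>M}|c_i^{(n)}|^{2}\lambda_i^{{(n)}^{2}}<\varepsilon$, while $\sum_i|c_i^{(n)}|^{2}\lambda_i^{{(n)}^{2}}\to 1$ forces $\sum_{\lambda_i^{(n)}\le M}|c_i^{(n)}|^{2}\lambda_i^{{(n)}^{2}}\ge 1-2\varepsilon$ for large $n$. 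Using $\lambda_i^{(n)}\ge \lambda_i^{{(n)}^{2}}/M$ in that range,
\[
\bra{\psi^{(n)}}\overline{R}^{(n)}\ket{\psi^{(n)}}\ge \sum_{\lambda_i^{(n)}\le M}|c_i^{(n)}|^{2}\lambda_i^{(n)}\ge \frac{1-2\varepsilon}{M},
\]
whence $\liminf_n \sqrt{s_0^{(n)}}\ge (1-2\varepsilon)/M>0$, giving (B). I expect this truncation lower bound to be the main subtlety: the indispensable appeal to $\lambda_i^{(n)}\ge 0$ is precisely what rules out the pathology already exhibited in Example \ref{exa:positivity}, where dropping positivity of $\overline{R}^{(n)}$ produces mutually singular limits.
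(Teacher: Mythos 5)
Your proof is correct and follows essentially the same route as the paper's: for sufficiency you take the canonical rank-one version $R^{(n)}=s_0^{(n)-3/2}\sigma^{(n)}\ket{\psi^{(n)}}\bra{\psi^{(n)}}\sigma^{(n)}$ (identical to the paper's $\sqrt{\sigma^{(n)}}\bigl(\sqrt{\sqrt{\sigma^{(n)}}\rho^{(n)}\sqrt{\sigma^{(n)}}}\bigr)^{+}\sqrt{\sigma^{(n)}}$) and bound its norm by $(\Tr\rho^{(n)}\sigma^{(n)})^{-1/2}$, while for necessity you run the same spectral truncation argument with $\lambda\ge\lambda^{2}/M$ on $[0,M]$ and the $L^2$-negligibility of $O^{(n)}$. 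The only cosmetic deviations are that you use the exact identity $\bra{\psi^{(n)}}R^{(n)}\ket{\psi^{(n)}}=\sqrt{\Tr\rho^{(n)}\sigma^{(n)}}$ where the paper uses the inequality $\sigma^{(n)}\ge R^{(n)}\rho^{(n)}R^{(n)}$, and an eigenvalue computation where the paper bounds $R^{(n)}\le\varepsilon^{-1/2}\sigma^{(n)}$; both are sound.
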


The proofs of Theorems \ref{thm:contiguity_ac} and \ref{thm:contiguity_pure} are lengthy, and are deferred to Appendix \ref{sec:proofs}.

\section{Convergence in distribution}\label{sec:convergenceInLaw}

In this section we introduce a quantum extension of the notion of convergence in distribution in terms of the ``quasi-characteristic'' function \cite{{qclt},{YFG}}.
This mode of convergence turns out to be useful in asymptotic theory of quantum statistics. 
For a brief account of quantum Gaussian states, see Appendix \ref{app:q-Gaussian}.

\begin{defn} \label{def:conv_in_qlaw}
For each $n\in \N$, let $\rho^{(n)}$ be a quantum state and ${X}^{(n)}=\left(X_{1}^{(n)},\dots,X_{d}^{(n)}\right)$ be a list of observables on a finite dimensional Hilbert space $\H^{(n)}$. 
Further, let $\phi$ be a normal state (represented by a linear functional) and ${X}^{(\infty)}=\left(X_{1}^{(\infty)},\dots,X_{d}^{(\infty)}\right)$ be a list of observables on a possibly infinite dimensional Hilbert space $\H^{(\infty)}$ such that $\xi^i X_i^{(\infty)}$ is densely defined for every $\x=(\xi^i)\in\R^d$. 
We say the sequence 
$\left({X}^{(n)},\rho^{(n)}\right)$ {\em converges in distribution} to $\left({X}^{(\infty)},\phi \right)$, in symbols
\[ 
({X}^{(n)},\rho^{(n)}) \rightsquigarrow \left({X}^{(\infty)}, \phi \right),
\]
if
\[
\lim_{n\to\infty}\Tr\rho^{(n)}\left(\prod_{t=1}^r e^{\i\xi_{t}^{i}X_{i}^{(n)}}\right)
=
\phi \left(\prod_{t=1}^r e^{\i\xi_{t}^{i}X_{i}^{(\infty)}}\right)
\]
holds for any $r\in\N$ and subset $\{\xi_{t}\}_{t=1}^{r}$ of $\R^d$.
When the limiting state $\phi$ is a quantum Gaussian state, in that $\left({X}^{(\infty)},\phi \right)\sim N(h,J)$, we also use the abridged notation
\[
 {X}^{(n)} \convd{\rho^{(n)}}  N(h,J),
\]
in accordance with the convention in classical statistics.
\end{defn}

A slight generalization is the following mode of convergence, which plays an essential role in the present paper. 

\begin{defn}\label{def:conv_in_qlaw_sandwich}
In addition to the setting for Definition \ref{def:conv_in_qlaw}, 
let $Y^{(n)}$ and $Y^{(\infty)}$ be observables on $\H^{(n)}$ and $\H^{(\infty)}$, respectively, 
with $Y^{(\infty)}$ being densely defined.
If
\[
\lim_{n\to\infty}\Tr\rho^{(n)}e^{\i\eta_{1}Y^{(n)}}\left\{ \prod_{t=1}^{r}e^{\i\xi_{t}^{i}X_{i}^{(n)}}\right\} e^{\i\eta_{2}Y^{(n)}}
=
\phi \left(e^{\i\eta_{1}Y^{(\infty)}}\left\{ \prod_{t=1}^{r}e^{\i\xi_{t}^{i}X_{i}^{(\infty)}}\right\} e^{\i\eta_{2}Y^{(\infty)}}\right)
\]
holds for any $r\in\N$, subset $\{\xi_{t}\}_{t=1}^{r}$ of $\R^{d}$, and $\eta_1,\eta_2\in\R$, 
then we denote 
\begin{equation*}
\left( \sand{Y^{(n)}}{{X}^{(n)}}, \r^{(n)} \right)
\rightsquigarrow
\left( \sand{Y^{(\infty)}}{{X}^{(\infty)}}, \phi \right)
\end{equation*}
or  
\begin{equation*}
\sand{Y^{(n)}}{{X}^{(n)}}_{\r^{(n)}}
\rightsquigarrow
\sand{Y^{(\infty)}}{{X}^{(\infty)}}_{\phi}.
\end{equation*}
We shall call this type of convergence a {\em sandwiched convergence in distribution} to emphasize that the observables $Y^{(n)}$ and $Y^{(\infty)}$ that appear at both ends of the quasi-characteristic function play special roles.
\end{defn}

The sandwiched convergence in distribution will be used in conjunction with the following form of the quantum L\'evy-Cram\'er continuity theorem.

\begin{lem}\label{lem:qLevyCramerSand}
Let $(X^{(n)}, Y^{(n)}, \rho^{(n)})$ and $(X^{(\infty)}, Y^{(\infty)}, \phi)$ be as in Definition \ref{def:conv_in_qlaw_sandwich}. 
If 
\[
\sand{Y^{(n)}}{{X}^{(n)}}_{\r^{(n)}}
\rightsquigarrow
\sand{Y^{(\infty)}}{{X}^{(\infty)}}_{\phi},
\]
then
\begin{equation}\label{eq:bounded_conv_beside}
\lim_{n\to\infty}\Tr\rho^{(n)}g_{1}(Y^{(n)})\left\{ \prod_{t=1}^{r}f_{t}( \xi^i_t X_{i}^{(n)})\right\} g_{2}(Y^{(n)})
=
\phi\left(g_{1}(Y^{(\infty)})\left\{ \prod_{t=1}^{r}f_{t}(\xi^i_t X_{i}^{(\infty)})\right\} g_{2}(Y^{(\infty)})\right)
\end{equation}
holds for any $r\in\N$, subset $\{\xi_{t}\}_{t=1}^{r}$ of $\R^{d}$, bounded continuous functions $f_{1},\dots,f_{r}$, and bounded Borel functions $g_{1},g_{2}$ on $\R$ 
such that the set $\D(g_{i})$ of discontinuity points of $g_{i}$ has $\mu$-measure zero for $i=1,2$, where $\mu$ is the classical probability measure on $\R$ having the characteristic function $\varphi_\mu(\eta):=\phi ( e^{\i \eta Y^{(\infty)}} )$.
\end{lem}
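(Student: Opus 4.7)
My plan is to bootstrap from the hypothesized pointwise convergence of the quasi-characteristic function, using Fourier inversion to extend the admissible class of functions and then handling bounded continuous and Borel functions by approximation. The basic tool is the Cauchy--Schwarz-type estimate
\begin{equation*}
\left|\Tr\rho\, h(Y)\, A\, k(Y)\right| \;\le\; \|A\|\left(\int|h|^{2}\,d\mu_{Y}^{\rho}\right)^{1/2}\left(\int|k|^{2}\,d\mu_{Y}^{\rho}\right)^{1/2},
\end{equation*}
valid for any state $\rho$, bounded operator $A$, and Borel functions $h,k$, where $\mu_{Y}^{\rho}$ denotes the spectral distribution of $Y$ in the state $\rho$; this follows from Cauchy--Schwarz for the Hilbert--Schmidt inner product applied to $\rho^{1/2}h(Y)$ and $Ak(Y)\rho^{1/2}$. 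Specialization of the hypothesis to $r=0$ and the classical L\'evy--Cram\'er continuity theorem yield the weak convergence $\mu_{Y^{(n)}}^{\rho^{(n)}}\rightsquigarrow\mu$ as probability measures on $\R$, while setting $\eta_{1}=\eta_{2}=0$ gives weak convergence (hence tightness) of the classical distribution of $\xi_{t}^{i}X_{i}^{(n)}$ under $\rho^{(n)}$ for each fixed $\xi_{t}$.

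Since the quasi-characteristic function $\Psi^{(n)}(\eta_{1},\xi_{1},\dots,\xi_{r},\eta_{2})$ is uniformly bounded by $1$ and converges pointwise to $\Psi^{(\infty)}$, dominated convergence lifts the hypothesis to
\begin{equation*}
\Tr\rho^{(n)}\,\hat\nu_{0}(Y^{(n)})\prod_{t=1}^{r}\hat\nu_{t}(X^{(n)})\,\hat\nu_{r+1}(Y^{(n)}) \;\longrightarrow\; \phi\!\left(\hat\nu_{0}(Y^{(\infty)})\prod_{t=1}^{r}\hat\nu_{t}(X^{(\infty)})\,\hat\nu_{r+1}(Y^{(\infty)})\right)
\end{equation*}
for any finite complex measures $\nu_{0},\nu_{1},\dots,\nu_{r+1}$, where $\hat\nu$ denotes the Fourier transform. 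This already covers all $f_{t}$ and $g_{i}$ that are Fourier transforms of finite measures, in particular Schwartz functions. To reach arbitrary bounded continuous $f_{t}$ and $g_{i}$, I would approximate each by a Schwartz function that agrees with it to within $\varepsilon$ on a large interval $[-K,K]$ while remaining uniformly bounded on $\R$, control the tail $|x|>K$ via the tightness of the corresponding spectral distributions, and use a telescoping over the index $t$ together with the displayed Cauchy--Schwarz estimate to bound the resulting sandwich error.

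Finally, I would promote $g_{1},g_{2}$ from bounded continuous to bounded Borel functions continuous $\mu$-a.e. Given such a $g_{i}$, mollification after truncation to a compact set of nearly full $\mu$-mass produces a bounded continuous $g_{i}^{\varepsilon}$ with $\|g_{i}-g_{i}^{\varepsilon}\|_{L^{2}(\mu)}<\varepsilon$. The Cauchy--Schwarz estimate bounds the corresponding substitution error in the sandwich by a constant multiple of $\|g_{i}-g_{i}^{\varepsilon}\|_{L^{2}(\mu_{Y^{(n)}}^{\rho^{(n)}})}$; since $|g_{i}-g_{i}^{\varepsilon}|^{2}$ is bounded Borel and continuous outside $\D(g_{i})$, which is $\mu$-null, the classical Portmanteau theorem gives $\int|g_{i}-g_{i}^{\varepsilon}|^{2}\,d\mu_{Y^{(n)}}^{\rho^{(n)}}\to\int|g_{i}-g_{i}^{\varepsilon}|^{2}\,d\mu<\varepsilon^{2}$, supplying the uniform control needed to close the argument. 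The main obstacle will be the preceding $f_{t}$-step: the non-commutativity between $f_{t}(\xi_{t}^{i}X_{i}^{(n)})$ and the surrounding factors prevents a direct reduction to a single scalar $L^{2}$ integral, so the telescoping estimate for the tail contribution must be orchestrated so that the auxiliary positive functionals $B\mapsto\Tr W^{*}\rho^{(n)}W\,\chi_{B}(\xi_{t}^{i}X_{i}^{(n)})$, with $W$ a bounded product of remaining factors, inherit tightness from the hypothesis.
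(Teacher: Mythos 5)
Your opening and closing steps are sound, but there is a genuine gap at exactly the point you flag, and it is the heart of the lemma. The paper does not prove this statement from scratch: it simply observes that the list $Z^{(n)}=(Y^{(n)},\,\xi_1^iX_i^{(n)},\dots,\xi_r^iX_i^{(n)},\,Y^{(n)})$ (so $s=r+2$, $J=1$) satisfies conditions \eqref{eq:quasi-conv1}--\eqref{eq:quasi-conv3} of the noncommutative L\'evy--Cram\'er theorem (Theorem \ref{thm:qLevyCramer}, taken over from Jak\v{s}i\'c--Pautrat--Pillet), that the measure condition \eqref{eq:levycramer_measure} for the two end slots involves only $\mu$ and is independent of $\alpha$, and then invokes that theorem. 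In particular \eqref{eq:quasi-conv3}, the convergence of $\Tr\rho^{(n)}U_{2}^{+(n)}(\alpha)^*U_{2}^{+(n)}(\beta)$, is again covered by the sandwiched hypothesis because the adjoint of a product of $X$-exponentials is still a product of $X$-exponentials, leaving one $Y$-exponential at each end. What you propose is, in effect, to reprove that cited theorem: your Cauchy--Schwarz estimate, the Fourier/dominated-convergence extension to functions of the form $\hat\nu$, and the final Portmanteau argument for the Borel functions $g_1,g_2$ are all correct, but they are the easy outer layers.

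The unproved core is the replacement of the interior exponentials by general bounded continuous $f_t$, and the route you sketch for it does not work as stated. If $W$ is the product of the \emph{remaining} (not yet replaced) factors, then as soon as these include general bounded continuous $f_u$'s and the Borel function $g_2$, the characteristic functions $s\mapsto\Tr\rho^{(n)}W^*e^{\i s\xi_t^iX_i^{(n)}}W$ are no longer of the sandwiched form, so their convergence --- and hence the tightness of your auxiliary functionals --- is not supplied by the hypothesis; it would require essentially the statement being proved. The workable orchestration is the opposite one: conjugate $\rho^{(n)}$ by the already-replaced side, whose factors are Fourier transforms of finite measures, so that after Fubini the relevant transforms are integrals of sandwiched quasi-characteristic functions and converge to a limit continuous at $0$, whence tightness by the L\'evy argument for finite positive measures. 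Even then the auxiliary measures depend on the approximants themselves (hence on the truncation level $K$), and this circularity has to be removed, e.g.\ by replacing one factor at a time in a further induction. None of this is carried out in your proposal, so the essential content --- precisely what Theorem \ref{thm:qLevyCramer} encapsulates --- remains unestablished; either supply that argument in full or, as the paper does, reduce the lemma to the cited theorem by verifying its hypotheses.
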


\begin{proof}
Let $s:=r+2$, and let $J$ be an arbitrary natural number between $1$ and $s-1$ (say $J=1$).
Then the list of observables 
\[ 
{Z}^{(n)}
=(Z_{1}^{(n)},\dots,Z_{s}^{(n)})
:=(Y^{(n)}, \xi^i_1 X_i^{(n)},\dots, \xi^i_r X_i^{(n)} ,Y^{(n)})
\]
fulfils conditions 
\eqref{eq:quasi-conv1}, \eqref{eq:quasi-conv2}, and  \eqref{eq:quasi-conv3} 
in the quantum L\'evy-Cram\'er continuity Theorem \ref{thm:qLevyCramer} cited in Appendix \ref{app:q-LevyCramer}. 
Furthermore, the functions $g_1$ and $g_2$ satisfy condition \eqref{eq:levycramer_measure} in the theorem. 
Thus the claim is an immediate consequence of Theorem \ref{thm:qLevyCramer}. 
\end{proof}

In classical statistics, if random variables $X^{(n)}$ converge in distribution to a random variable $X$, and random variables $O^{(n)}$ converge in $L^2$ (and hence in probability) to 0, then $X^{(n)}+O^{(n)}$ converge in distribution to $X$ \cite[Lemma 2.8]{Vaart}.
However, its obvious analogue in quantum statistics fails to be true, as the following example illustrates.

\begin{example}
Let
\[
\rho^{(n)}:=\begin{pmatrix}1 & 0\\0 & 0\end{pmatrix}, \qquad
X^{(n)}:=\begin{pmatrix}1 & n\\n & 1+n^{2}\end{pmatrix}, \qquad
O^{(n)}:=\begin{pmatrix}0 & 0\\0 & -n^{2}\end{pmatrix}. 
\]
It is not difficult to verify that
\[ 
\lim_{n\to\infty} \Tr \r^{(n)} e^{\i \xi X^{(n)}} = 1 
\]
for all $\xi\in\R$, and $O^{(n)}=o_{L^{2}}\left(\rho^{(n)}\right)$.
However 
\[ 
 \Tr \r^{(n)} e^{\i \xi (X^{(n)}+O^{(n)})} 
 =e^{ \i \xi} \cos n\xi,
\]
which has no limit as $n\to\infty$. 
\end{example}

The above example shows that an $L^2$-infinitesimal sequence of observables is not always negligible in quasi-characteristic functions.
We therefore introduce another class of infinitesimal objects pertinent to the convergence in distribution. 

\begin{defn} \label{defn:infinitesimal}
Let $\H^{(n)}$ be a sequence of finite dimensional Hilbert spaces, and let $Z^{(n)}$ and $\r^{(n)}$ be an observable and a state on $\H^{(n)}$.
We say a sequence $O^{(n)}$ of observables, each defined on $\H^{(n)}$, is {\em infinitesimal in distribution} (or simply {\em D-infinitesimal}) with respect to $(Z^{(n)}, \r^{(n)})$, denoted $O^{(n)}=o_D\left(Z^{(n)}, \rho^{(n)}\right)$,
if
\begin{equation}\label{eq:infini_p}
\lim_{n\to\infty} \Tr\rho^{(n)}\left\{ \prod_{t=1}^{r}e^{\sqrt{-1}(\xi_{t}Z^{(n)}+\eta_{t}O^{(n)})}\right\}
=\lim_{n\to\infty} \Tr\rho^{(n)}\left\{ \prod_{t=1}^{r}e^{\sqrt{-1}\xi_{t}Z^{(n)}}\right\}
\end{equation}
holds for any $r\in\N$, and subsets $\left\{ \xi_{t}\right\} _{t=1}^{r}$ and $\left\{\eta_t\right\} _{t=1}^{r}$ of $\R$.
\end{defn}

The following lemma asserts that a D-infinitesimal sequence is negligible in the sandwiched convergence.

\begin{lem}\label{thm:inifini_both_side}
If $\sand{Z^{(n)}}{{X}^{(n)}}\convd{\r^{(n)}}\sand{Z^{(\infty)}}{{X}^{(\infty)}}$ and $O^{(n)}=o_D\left(Z^{(n)}, \rho^{(n)}\right)$ then
\begin{equation*}
	\sand{Z^{(n)}+O^{(n)}}{{X}^{(n)}}
	\convd{\r^{(n)}}
	\sand{Z^{(\infty)}}{{X}^{(\infty)}}.
\end{equation*}
\end{lem}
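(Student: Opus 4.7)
The plan is to show that replacing $Z^{(n)}$ by $Z^{(n)}+O^{(n)}$ at the two ends of the sandwiched quasi-characteristic function does not alter its limit. Fix $r\in\N$, $\{\xi_t\}_{t=1}^r\subset\R^d$, and $\eta_1,\eta_2\in\R$, and abbreviate $V_j:=e^{\i\eta_j(Z^{(n)}+O^{(n)})}$, $W_j:=e^{\i\eta_j Z^{(n)}}$ for $j=1,2$, together with $M:=\prod_{t=1}^r e^{\i\xi_t^i X_i^{(n)}}$. All three are unitary on $\H^{(n)}$. Since the sandwiched-convergence hypothesis yields
\[
\lim_{n\to\infty}\Tr\rho^{(n)} W_1 M W_2
=\phi\left(e^{\i\eta_1 Z^{(\infty)}}\left\{\prod_{t=1}^{r}e^{\i\xi_t^i X_i^{(\infty)}}\right\}e^{\i\eta_2 Z^{(\infty)}}\right),
\]
the claim is equivalent to showing $\Delta_n:=\Tr\rho^{(n)} V_1 M V_2-\Tr\rho^{(n)} W_1 M W_2\to 0$.

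The first step is to telescope
\[
\Delta_n=\Tr\rho^{(n)}(V_1-W_1) M V_2+\Tr\rho^{(n)} W_1 M (V_2-W_2),
\]
and apply the Schwarz inequality $|\phi_n(AB)|^2\le\phi_n(AA^*)\phi_n(B^*B)$ for the positive linear functional $\phi_n(X):=\Tr\rho^{(n)} X$. For the first summand, take $A=V_1-W_1$ and $B=M V_2$; unitarity of $M$ and $V_2$ forces $\phi_n(B^*B)=\phi_n(I)=1$, so
\[
|\Tr\rho^{(n)}(V_1-W_1) M V_2|^2\le \phi_n\!\left((V_1-W_1)(V_1-W_1)^*\right)=2-2\Re\Tr\rho^{(n)} V_1 W_1^*.
\]
An analogous computation (with $A=W_1 M$, $B=V_2-W_2$) gives $|\Tr\rho^{(n)} W_1 M (V_2-W_2)|^2\le 2-2\Re\Tr\rho^{(n)} W_2^* V_2$. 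Hence it suffices to prove
\[
\Tr\rho^{(n)} V_1 W_1^*\longrightarrow 1\qquad\text{and}\qquad\Tr\rho^{(n)} W_2^* V_2\longrightarrow 1.
\]

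The second step is to deduce these two scalar limits directly from the D-infinitesimal hypothesis. Applying \eqref{eq:infini_p} with $r=2$ and the two pairs of parameters set to $(\eta_1,\eta_1)$ and $(-\eta_1,0)$, the left-hand side of \eqref{eq:infini_p} becomes $\Tr\rho^{(n)} e^{\i\eta_1(Z^{(n)}+O^{(n)})} e^{-\i\eta_1 Z^{(n)}}=\Tr\rho^{(n)} V_1 W_1^*$, while the right-hand side is identically $\Tr\rho^{(n)} e^{\i\eta_1 Z^{(n)}} e^{-\i\eta_1 Z^{(n)}}=1$; the parameter choice $(-\eta_2,0)$ followed by $(\eta_2,\eta_2)$ yields the analogous statement for $W_2^*V_2$. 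Substituting these back into the Schwarz bounds gives $\Delta_n\to 0$, completing the argument.

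The conceptual obstacle is that the D-infinitesimal hypothesis constrains only expressions built from $Z^{(n)}$ and $O^{(n)}$, whereas the sandwiched quasi-characteristic function also involves arbitrary exponentials of the $X_i^{(n)}$ wedged in the middle. The Schwarz inequality above is what bridges this gap: it ``factors out'' the unitary middle piece $M$, whose contribution collapses to the trivial factor $\phi_n(I)=1$, thereby reducing the problem to a pure two-exponential statement in $Z^{(n)}$ and $O^{(n)}$ that falls directly under the D-infinitesimal condition. No expansion of exponentials (Trotter, Duhamel, or similar) is required.
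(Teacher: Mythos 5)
Your proof is correct and follows essentially the same route as the paper: telescope the difference between the two sandwiched quasi-characteristic functions, apply the Cauchy--Schwarz inequality for the state so that the unitary middle product contributes only the factor $\Tr\rho^{(n)}I=1$, and then invoke the D-infinitesimal condition \eqref{eq:infini_p} with $r=2$ to show the resulting two-exponential cross terms tend to $1$. The only (immaterial) difference is the order in which the two endpoint replacements are carried out.
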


The proof of Lemma \ref{thm:inifini_both_side} is straightforward, and is deferred to Appendix \ref{sec:proofs}.

\section{Le Cam's third Lemma}\label{sec:lecam3}

We are now ready to extend Le Cam's third lemma to the quantum domain. 
Our first result is the following abstract version of Le Cam's third lemma, a noncommutative analogue of \cite[Theorem 6.6]{Vaart}.

\begin{thm}\label{thm:qLeCam3}
Given a sequence $\H^{(n)}$ of finite dimensional Hilbert spaces, let $\rho^{(n)}$ and $\sigma^{(n)}$ be quantum states and let ${X}^{(n)}=\left(X_{1}^{(n)},\dots,X_{d}^{(n)}\right)$ be a list of observables on $\H^{(n)}$. 
Further, let $R^{(n)}$ be (a version of) the square-root likelihood ratio $\ratio{\s^{(n)}}{\r^{(n)}}$. 
Suppose that
\begin{itemize}
\item[{\rm (i)}]  
there exists an $L^2$-infinitesimal sequence $O^{(n)}$ of observables
such that $\s^{(n)}\vartriangleleft_{O^{(n)}} \r^{(n)}$, and
\item[{\rm (ii)}] 
there exist a normal state $\phi$, 
a list of observables ${X}^{(\infty)}=\left(X_{1}^{(\infty)},\dots,X_{d}^{(\infty)}\right)$, 
and a positive observable $R^{(\infty)}$
on a possibly infinite dimensional Hilbert space $\H^{(\infty)}$
such that
\[
\sand{R^{(n)} + O^{(n)}}{{X}^{(n)}}_{\r^{(n)}}
\rightsquigarrow
\sand{R^{(\infty)}}{{X}^{(\infty)}}_{\phi},
\]
\end{itemize}
Then 
\[
\left( {X}^{(n)}, \s^{(n)} \right)
\rightsquigarrow
\left( {X}^{(\infty)}, \psi \right),
\]
where $\psi$ is a normal state on $\H^{(\infty)}$ defined by
\begin{equation}\label{eqn:psi}
\psi(A):=\phi\left(R^{(\infty)} A R^{(\infty)} \right)
\end{equation}
for bounded operators $A\in\B(\H^{(\infty)})$.
\end{thm}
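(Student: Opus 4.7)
The plan is to verify convergence of the quasi-characteristic functions
\[
\lim_{n\to\infty}\Tr\sigma^{(n)}U^{(n)} \;=\; \psi(U^{(\infty)}),\qquad U^{(n)}:=\prod_{t=1}^{r}e^{\i\xi_{t}^{i}X_{i}^{(n)}},\quad U^{(\infty)}:=\prod_{t=1}^{r}e^{\i\xi_{t}^{i}X_{i}^{(\infty)}},
\]
via three successive reductions. First, invoke the quantum Lebesgue decomposition of Section \ref{sec:Lebesgue} to write $\sigma^{(n)}=R^{(n)}\rho^{(n)}R^{(n)}+\sigma^{(n)^{\perp}}$. Condition (i) of Definition \ref{def:qcontiguity} forces $\Tr\sigma^{(n)^{\perp}}=1-\Tr\rho^{(n)}R^{(n)^{2}}\to 0$; since $\|U^{(n)}\|=1$, the singular part drops out, and by cyclicity of the trace,
\[
\Tr\sigma^{(n)}U^{(n)} \;=\; \Tr\rho^{(n)}R^{(n)}U^{(n)}R^{(n)}+o(1).
\]

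Second, I would replace $R^{(n)}$ by $\overline{R}^{(n)}:=R^{(n)}+O^{(n)}$. The error $\overline{R}^{(n)}U^{(n)}\overline{R}^{(n)}-R^{(n)}U^{(n)}R^{(n)}$ expands into three cross terms involving $O^{(n)}$, each of which is controlled by a Cauchy--Schwarz estimate in the Hilbert--Schmidt inner product, e.g.\
\[
\bigl|\Tr\rho^{(n)}O^{(n)}U^{(n)}R^{(n)}\bigr| \;\le\; \sqrt{\Tr\rho^{(n)}O^{(n)^{2}}}\,\sqrt{\Tr\rho^{(n)}R^{(n)^{2}}}.
\]
Because $O^{(n)}=o_{L^{2}}(\rho^{(n)})$ and $\Tr\rho^{(n)}R^{(n)^{2}}\to 1$, all cross terms vanish, and one obtains
\[
\Tr\sigma^{(n)}U^{(n)} \;=\; \Tr\rho^{(n)}\overline{R}^{(n)}U^{(n)}\overline{R}^{(n)}+o(1).
\]

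Third, fix a level $M>0$ that avoids the (at most countably many) atoms of the classical distribution $\mu$ of $R^{(\infty)}$ under $\phi$, and truncate via $\tau_{M}(y):=y\,\mathbbm{1}_{M}(y)$. Since $\tau_{M}$ is a bounded Borel function whose discontinuity set $\D(\tau_{M})\subset\{-M,M\}$ is $\mu$-null, Lemma \ref{lem:qLevyCramerSand} applied to hypothesis (ii) with $g_{1}=g_{2}=\tau_{M}$ yields
\[
\lim_{n\to\infty}\Tr\rho^{(n)}\tau_{M}(\overline{R}^{(n)})U^{(n)}\tau_{M}(\overline{R}^{(n)}) \;=\; \phi\bigl(\tau_{M}(R^{(\infty)})U^{(\infty)}\tau_{M}(R^{(\infty)})\bigr).
\]
The uniform integrability of $\overline{R}^{(n)^{2}}$ under $\rho^{(n)}$ in Definition \ref{def:qcontiguity} controls the truncation error on the $n$-side: again via Cauchy--Schwarz, $|\Tr\rho^{(n)}(\overline{R}^{(n)}U^{(n)}\overline{R}^{(n)}-\tau_{M}(\overline{R}^{(n)})U^{(n)}\tau_{M}(\overline{R}^{(n)}))|$ is bounded by a quantity tending to $0$ uniformly in $n$ as $M\to\infty$, using $\Tr\rho^{(n)}(I-\mathbbm{1}_{M}(\overline{R}^{(n)}))\overline{R}^{(n)^{2}}\to 0$ uniformly in $n$.

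The hard part will be passing $M\to\infty$ on the limit side, which also requires establishing that $\psi(A):=\phi(R^{(\infty)}AR^{(\infty)})$ is a well-defined normal state. I would extract the key bound $\phi(R^{(\infty)^{2}})\le 1$ from Lemma \ref{lem:qLevyCramerSand} taken at $r=0$, $g_{1}=g_{2}=\tau_{M}$: the identity $\phi(\tau_{M}(R^{(\infty)})^{2})=\lim_{n}\Tr\rho^{(n)}\tau_{M}(\overline{R}^{(n)})^{2}\le \sup_{n}\Tr\rho^{(n)}\overline{R}^{(n)^{2}}$ is uniformly bounded (indeed tending to $1$ by the estimates from the second reduction), whence monotone convergence in $M$ gives $\phi(R^{(\infty)^{2}})\le 1$. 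Normality of $\phi$ combined with the spectral calculus then implies $\phi(\tau_{M}(R^{(\infty)})U^{(\infty)}\tau_{M}(R^{(\infty)}))\to\phi(R^{(\infty)}U^{(\infty)}R^{(\infty)})=\psi(U^{(\infty)})$ as $M\to\infty$. A standard two-parameter limit --- first $n\to\infty$ at fixed $M$, then $M\to\infty$ --- combines the four estimates to deliver the desired convergence, and simultaneously shows $\psi(I)=1$ by applying the same argument to the trivial case $U^{(n)}=I$.
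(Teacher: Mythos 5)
Your proposal is correct and follows essentially the same route as the paper's proof: the same three reductions (dropping the singular part via $\Tr\s^{(n)\perp}=1-\Tr\r^{(n)}R^{(n)^2}\to 0$, swapping $R^{(n)}$ for $\overline{R}^{(n)}$ by Cauchy--Schwarz with $O^{(n)}=o_{L^2}(\r^{(n)})$, and truncating at a non-atomic level $M$ so that Lemma \ref{lem:qLevyCramerSand} applies), with uniform integrability controlling the truncation error uniformly in $n$ and the spectral/Hilbert--Schmidt argument giving $\phi(R^{(\infty)^2})\le 1$, well-definedness of $\psi$, and the passage $M\to\infty$ on the limit side. The only difference is presentational: you run the reductions from $\s^{(n)}$ toward the limit, while the paper runs them in the opposite order.
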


In order to get a better understanding of Theorem \ref{thm:qLeCam3}, we give an informal interpretation. 
Let 
\[ (\sigma^{(n)})^{ac}=R^{(n)}\rho^{(n)} R^{(n)} \]
be the absolutely continuous part of $\sigma^{(n)}$ with respect to $\rho^{(n)}$. 
Then, thanks to the contiguity (i) and the sandwiched convergence in distribution (ii), the absolutely continuous part $(\sigma^{(n)})^{ac}$ converges (in a certain sense) to a density operator
\[ \sigma^{(\infty)}:=R^{(\infty)}\rho^{(\infty)} R^{(\infty)} \]
on $\H^{(\infty)}$, where $\rho^{(\infty)}$ is the density operator of $\phi$, so that
\[ \Tr \sigma^{(\infty)} A=\Tr \left(R^{(\infty)}\rho^{(\infty)} R^{(\infty)}\right) A = \Tr \rho^{(\infty)} \left(R^{(\infty)} A R^{(\infty)}\right). \] 
Letting $\psi(A):=\Tr \sigma^{(\infty)} A$, we have \eqref{eqn:psi}.
The proof of Theorem \ref{thm:qLeCam3} is slightly complicated, and is deferred to Appendix \ref{sec:proofs}.

A crucial application of Theorem \ref{thm:qLeCam3} is the following theorem, which is a natural quantum counterpart of the standard Le Cam third lemma \cite[Example 6.7]{Vaart}

\begin{thm}[Quantum Le Cam third lemma]\label{thm:LeCam3clt}
Given a sequence $\H^{(n)}$ of finite dimensional Hilbert spaces, let $\rho^{(n)}$ and $\sigma^{(n)}$ be quantum states,
and let ${X}^{(n)}=\left(X_{1}^{(n)},\dots,X_{d}^{(n)}\right)$ be a list of observables on $\H^{(n)}$. 
Further, let $R^{(n)}$ be (a version of) the square-root likelihood ratio $\ratio{\r^{(n)}}{\s^{(n)}}$. 
Suppose that there exist a sequence $O^{(n)}=o_{L^2}(\r^{(n)})$ satisfying $R^{(n)}+O^{(n)}>0$, 
and a sequence ${\tilde O}^{(n)}=o_D(\log (R^{(n)}+O^{(n)}),\r^{(n)})$ satisfying
\begin{equation}\label{eq:joint_g}
\begin{pmatrix} X^{(n)} \\ 2 \log (R^{(n)}+O^{(n)})-{\tilde O}^{(n)}\end{pmatrix}
\convd{\;\; \r^{(n)}}
	N\left(\begin{pmatrix}\mu \\ -\frac{1}{2} s^2 \end{pmatrix},
	\begin{pmatrix}\Sigma & \kappa \\ \kappa* & s^2 \end{pmatrix}\right).
\end{equation}
Here, $\mu\in\R^d$, $s\in\R$, $\kappa\in\C^d$, and $\Sigma$ is a $d\times d$ complex Hermitian positive semidefinite matrix.
Then
\begin{equation}\label{eqn:LeCam3clt1}
\s^{(n)} \triangleleft \r^{(n)}
\end{equation}
and
\begin{equation}\label{eqn:LeCam3clt2}
X^{(n)} \convd{\;\;\s^{(n)}}  \;N(\mu+{\rm Re} (\kappa),\Sigma).
\end{equation}
\end{thm}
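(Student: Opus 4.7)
The plan is to reduce the claim to the abstract quantum Le Cam third lemma (Theorem \ref{thm:qLeCam3}) in four stages; throughout I write $\bar R^{(n)}:=R^{(n)}+O^{(n)}$.

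First I would absorb $\tilde O^{(n)}$ from \eqref{eq:joint_g}. Since $\tilde O^{(n)}=o_D(\log\bar R^{(n)},\r^{(n)})$ perturbs only the last entry of the list, a joint-list extension of Lemma \ref{thm:inifini_both_side} (proved by the same BCH-type absorption argument, since the infinitesimal $\tilde O^{(n)}$ slides through each quasi-characteristic factor without disturbing the $X^{(n)}$-factors) yields
\[
\begin{pmatrix}X^{(n)}\\ 2\log\bar R^{(n)}\end{pmatrix}\convd{\r^{(n)}}N\left(\begin{pmatrix}\mu\\-\tfrac12 s^2\end{pmatrix},\begin{pmatrix}\Sigma & \kappa\\ \kappa^* & s^2\end{pmatrix}\right),
\]
whose marginal is $2\log\bar R^{(n)}\convd{\r^{(n)}}N(-\tfrac12 s^2,s^2)$. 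By the quantum L\'evy--Cram\'er Theorem \ref{thm:qLevyCramer}, the spectral distribution $\mu^{(n)}$ of $2\log\bar R^{(n)}$ under $\r^{(n)}$ then converges weakly to $\mu:=N(-\tfrac12 s^2,s^2)$. Testing against the bounded continuous truncation $y\mapsto\min(e^{y},M^{2})$ and letting $M\to\infty$ produces
\[
\lim_{M\to\infty}\liminf_{n\to\infty}\Tr\r^{(n)}\min\bigl((\bar R^{(n)})^{2},M^{2}\bigr)=\int e^{y}\,d\mu(y)=1,
\]
which combined with the upper bound $\Tr\r^{(n)}(\bar R^{(n)})^{2}=\Tr\s^{(n)ac}+o(1)\leq 1+o(1)$ (expand $\bar R^{(n)}=R^{(n)}+O^{(n)}$ and apply Cauchy--Schwarz with $O^{(n)}=o_{L^{2}}(\r^{(n)})$) yields both moment convergence and uniform integrability. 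By Definition \ref{def:qcontiguity}, this gives $\s^{(n)}\vartriangleleft_{O^{(n)}}\r^{(n)}$, proving \eqref{eqn:LeCam3clt1}.

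Next I would upgrade the joint convergence to the sandwiched form required by Theorem \ref{thm:qLeCam3}. The sandwich ordering $e^{\i\eta_{1}\cdot 2\log\bar R^{(n)}}\prod e^{\i\xi_{t}^{i}X_{i}^{(n)}}e^{\i\eta_{2}\cdot 2\log\bar R^{(n)}}$ is already a special case of the quasi-characteristic products in Definition \ref{def:conv_in_qlaw}, so the joint convergence directly supplies
\[
\sand{2\log\bar R^{(n)}}{X^{(n)}}_{\r^{(n)}}\rightsquigarrow\sand{2\L^{(\infty)}}{X^{(\infty)}}_{\phi},
\]
where $(X^{(\infty)},\L^{(\infty)})$ is the quantum Gaussian realization of the limit on some $\H^{(\infty)}$. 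Applying Lemma \ref{lem:qLevyCramerSand} with the bounded Borel truncations $g_{M}(y):=\min(e^{y/2},M^{1/2})$ in the sandwich slots (whose discontinuity sets are $\mu$-null) and $f_{t}(x):=e^{\i x}$ in between, then letting $M\to\infty$ using the uniform integrability from the previous stage, upgrades this to
\[
\sand{\bar R^{(n)}}{X^{(n)}}_{\r^{(n)}}\rightsquigarrow\sand{R^{(\infty)}}{X^{(\infty)}}_{\phi},\qquad R^{(\infty)}:=e^{\L^{(\infty)}/2}.
\]
Theorem \ref{thm:qLeCam3} then delivers $(X^{(n)},\s^{(n)})\rightsquigarrow(X^{(\infty)},\psi)$ with $\psi(A)=\phi(R^{(\infty)}AR^{(\infty)})$.

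Finally I would identify $\psi$ with the quantum Gaussian $N(\mu+\Re\kappa,\Sigma)$. Because the joint limit is a quantum Gaussian, every commutator among $\{\L^{(\infty)},X_{1}^{(\infty)},\ldots,X_{d}^{(\infty)}\}$ is a central scalar, and a direct Baker--Campbell--Hausdorff computation gives the clean cancellation
\[
e^{\L^{(\infty)}/2}e^{\i\xi^{i}X_{i}^{(\infty)}}e^{\L^{(\infty)}/2}=e^{\L^{(\infty)}+\i\xi^{i}X_{i}^{(\infty)}}.
\]
Applying the quantum Gaussian moment formula (cf.\ Appendix \ref{app:q-Gaussian}) to the complex linear combination $\L^{(\infty)}+\i\xi^{i}X_{i}^{(\infty)}$ with mean $-\tfrac12 s^{2}+\i\xi^{i}\mu_{i}$ and symmetric covariance entries $s^{2}$, $\Re\kappa$, $\Sigma$ yields
\[
\phi\bigl(e^{\L^{(\infty)}+\i\xi^{i}X_{i}^{(\infty)}}\bigr)=\exp\bigl(\i\xi^{i}(\mu_{i}+\Re\kappa_{i})-\tfrac12\xi^{i}\xi^{j}\Sigma_{ij}\bigr);
\]
the $-s^{2}/2$ mean shift cancels against the $+\tfrac12 s^{2}$ variance contribution, while $\Im\kappa$ is absorbed into BCH phase corrections inside the moment formula. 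This is the quasi-characteristic function of $N(\mu+\Re\kappa,\Sigma)$; the analogous argument for multi-exponential products completes \eqref{eqn:LeCam3clt2}. The principal obstacle is precisely this Gaussian bookkeeping---demonstrating that only $\Re\kappa$, not the full $\kappa$, survives as the mean shift of the limiting law---followed in difficulty by the double-limit interchange in the truncation argument of stage two, which is made legitimate by the uniform integrability established in stage one.
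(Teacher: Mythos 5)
Your proposal is correct and follows essentially the same route as the paper: drop $\tilde O^{(n)}$ via Lemma \ref{thm:inifini_both_side}, use the sandwiched L\'evy--Cram\'er lemma (Lemma \ref{lem:qLevyCramerSand}) to pass from $2\log\overline{R}^{(n)}$ to $\overline{R}^{(n)}$ and to obtain contiguity from $E[e^{Z}]=1$ with $Z\sim N(-\tfrac12 s^{2},s^{2})$, then invoke Theorem \ref{thm:qLeCam3} and identify $\psi$ with $N(\mu+{\rm Re}\,\kappa,\Sigma)$. The only deviations are tactical and harmless: the paper avoids your truncation/double-limit step by sandwiching with the bounded continuous functions $f_{\eta}(x)=\exp[\i\,\eta e^{x/2}]$, carries out the Gaussian identification by analytic continuation of the quasi-characteristic function \eqref{eqn:q-GaussianCharFnc} (with $\tilde\xi_{0}=\tilde\xi_{r+1}=(0,-\i/2)$) rather than BCH merging, and your appeal to an unproven ``joint-list extension'' of Lemma \ref{thm:inifini_both_side} is unnecessary, since only its sandwiched special case (the lemma as stated) is ever used.
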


\begin{proof}
Let $(X_{1},\,\dots,\,X_{d},\,L)$ be the defining canonical observables of the algebra $\CCR{\Im\begin{pmatrix}\Sigma & \kappa\\ \kappa^{*} & s^2 \end{pmatrix}}$, 
and let $\phi \sim N\left(\begin{pmatrix}\mu\\ -\frac{1}{2}s^2 \end{pmatrix},\begin{pmatrix}\Sigma & \kappa\\ \kappa^{*} & s^2 \end{pmatrix}\right)$. 
Further, let $\overline{R}^{(n)}:=R^{(n)}+O^{(n)}$, and let $L^{(n)}:=2 \log (\overline{R}^{(n)})$. 
It then follows from \eqref{eq:joint_g} that 
\[
\sand{L^{(n)}-{\tilde O}^{(n)}}{X^{(n)}}_{\r^{(n)}} \rightsquigarrow \sand{L}{X}_{\phi}. 
\]
With Lemma \ref{thm:inifini_both_side}, this implies that
\begin{equation}\label{eqn:thm6.3sandwich}
\sand{L^{(n)}}{X^{(n)}}_{\r^{(n)}} \rightsquigarrow \sand{L}{X}_{\phi}. 
\end{equation}

Let us introduce a complex-valued bounded continuous function
\[
 f_{\eta}(x):=\exp\left[ \i \,\eta \left\{ \exp\left(\frac{x}{2} \right) \right\}\right]
\]
on $\R$ having a real parameter $\eta \in\R$.
It then follows from \eqref{eqn:thm6.3sandwich} and the sandwiched version of the quantum L\'evy-Cram\'er continuity theorem (Lemma \ref{lem:qLevyCramerSand}) that 
\begin{eqnarray*}
\lim_{n\to\infty}\Tr\rho^{(n)}
f_{\eta_1}(L^{(n)}) \left\{ \prod_{t=1}^{r}e^{\i\xi_{t}^{i}X_{i}^{(n)}}\right\} f_{\eta_2}(L^{(n)})
=\phi \left(f_{\eta_1}(L)\left\{ \prod_{t=1}^{r}e^{\i\xi_{t}^{i}X_{i}}\right\} f_{\eta_2}(L)\right),
\end{eqnarray*}
where $\eta_1,\eta_2\in\R$. 
This equality is rewritten as
\begin{eqnarray*}
\sand{e^{\frac{1}{2}L^{(n)}}}{X^{(n)}}_{\r^{(n)}} \rightsquigarrow \sand{e^{\frac{1}{2}L}}{X}_{\phi}, 
\end{eqnarray*}
or equivalently, 
\begin{eqnarray*}
\sand{\overline{R}^{(n)}}{X^{(n)}}_{\r^{(n)}} \rightsquigarrow \sand{e^{\frac{1}{2}L}}{X}_{\phi}. 
\end{eqnarray*}
Specifically, $\overline{R}^{(n)} \convd{\r^{(n)}} e^{\frac{1}{2}L}$, 
 and Lemma \ref{lem:qLevyCramerSand} leads to
\begin{eqnarray*}
\lim_{n\to\infty} \Tr \r^{(n)} \mathbbm{1}_{M}(\overline{R}^{(n)}) \overline{R}^{(n)^2}
=
\phi\left(\mathbbm{1}_{M}(e^{\frac{1}{2} L}) e^{L} \right)
=
E\left[\mathbbm{1}_{M}(e^{\frac{1}{2} Z}) e^Z  \right],
\end{eqnarray*}
where $Z$ is a classical random variable that obeys the normal distribution $N(-\frac{1}{2} s^2, s^2)$,  
and the right-hand side converges to $E[e^Z]=1$ as $M\to \infty$.  
This implies that $\s^{(n)} \triangleleft \r^{(n)}$, proving \eqref{eqn:LeCam3clt1}.

To prove \eqref{eqn:LeCam3clt2}, we need only evaluate the quasi-characteristic function of the state $\psi$ defined by \eqref{eqn:psi}, that is, 
\begin{eqnarray*}
\psi\left(\prod_{t=1}^{r} e^{\i\xi_{t}^{i}X_{i}}\right)
=\phi\left(e^{\frac{1}{2}L}\left\{\prod_{t=1}^{r} e^{\i\xi_{t}^{i}X_{i}}\right\}e^{\frac{1}{2}L} \right).
\end{eqnarray*}
In calculating this function, it is convenient to introduce the following enlarged vectors and matrices. 
\[
\tilde{\mu}:=\begin{pmatrix}\mu \\ -\frac{1}{2}s^2 \end{pmatrix}, \quad
\tilde{\Sigma}:=\begin{pmatrix}\Sigma & \kappa \\ \kappa^{*} & s^2 \end{pmatrix},\quad
\tilde{\xi}_{0}=\tilde{\xi}_{r+1}:=\begin{pmatrix} 0 \\ -\frac{\i}{2} \end{pmatrix},\quad
\tilde{\xi}_{t}:=\begin{pmatrix} \xi_t \\ 0 \end{pmatrix},\;(1\le t\le r). 
\]
Then by using the quasi-characteristic function \eqref{eqn:q-GaussianCharFnc} of the quantum Gaussian state $\phi$, we have
\begin{eqnarray*}
&&\psi\left(\prod_{t=1}^{r} e^{\i\xi_{t}^{i}X_{i}}\right) \\
&&\quad=
\phi\left(e^{\i \left(-\frac{\i}{2} \right)L}\left\{\prod_{t=1}^{r} e^{\i\xi_{t}^{i}X_{i}}\right\}e^{\i \left(-\frac{\i}{2} \right)L} \right) \\
&&\quad=
\exp\left[ \sum_{t=0}^{r+1}\left( \i \,\tilde\xi^i_t \tilde\mu_i
 -\frac{1}{2}\sum_{t=0}^{r+1}\tilde{\xi}_{t}^{i}\tilde{\xi}_{t}^{j}\tilde{\Sigma}_{ji}\right)
 -\sum_{t=0}^{r+1}\sum_{u=t+1}^{r+1}\tilde{\xi}_{t}^{i}\tilde{\xi}_{u}^{j}\tilde{\Sigma}_{ji}\right]\\
&&\quad= 
\exp\left[ \sum_{t=1}^{r}
\left(\i\, \xi_{t}^{i} \left( \mu_{i}+\Re(\kappa_i)\right)-\frac{1}{2}\xi_{t}^{i} \xi_{t}^{j}\Sigma_{ji}\right)
-\sum_{t=1}^{r}\sum_{s=t+1}^{r}\xi_{t}^{i}\xi_{s}^{j}\Sigma_{ji}\right].
\end{eqnarray*}
This is identical to the quasi-characteristic function of the quantum Gaussian state $N(\mu+{\rm Re} (\kappa),\Sigma)$. 
The assertion \eqref{eqn:LeCam3clt2} now follows immediately from Theorem \ref{thm:qLeCam3}.
\end{proof}

\section{Applications}\label{sec:example}

In this section we present three examples to demonstrate the validity, flexibility, and applicability of our theory.

\subsection{Contiguity without absolute continuity}

For each $n\in\N$, let us consider quantum states
\[
\rho^{(n)}
=\begin{pmatrix}\rho_{2}^{(n)} & \rho_{1}^{(n)} & 0\\
\rho_{1}^{(n)^{*}} & \rho_{0}^{(n)} & 0\\
0 & 0 & 0 \end{pmatrix},
\qquad
\sigma^{(n)}
=\begin{pmatrix}0 & 0 & 0\\
0 & \sigma_{0}^{(n)} & \sigma_{1}^{(n)}\\
0 & \sigma_{1}^{(n)^{*}} & \sigma_{2}^{(n)}
\end{pmatrix}
\]
on $\H^{(n)}\simeq \C^{2n+2}$, where
\[
\rho_{0}^{(n)}=\frac{1}{4n^{3}}
\begin{pmatrix}2n^{3}-1 & 0\\
0 & 1
\end{pmatrix},
\qquad
\sigma_{0}^{(n)}
=\frac{1-1/(2n)}{2(n^{2}+n+1)}
\begin{pmatrix}
n^2 & n^2+1\\
n^2+1 & n^2+2n+2
\end{pmatrix},
\]
\[
\r_1^{(n)^*}= 
\frac{1}{(n+1)^3}
\begin{pmatrix}
1 & \cdots & 1\\
1 & \cdots & 1
\end{pmatrix},
\qquad
\s_1^{(n)}= 
\frac{1}{(n+1)^3}
\begin{pmatrix}
1 & \cdots & 1\\
1 & \cdots & 1
\end{pmatrix},
\]
and
\[
\r_2^{(n)}= 
\frac{1}{2 n}I_{n},
\qquad
\s_2^{(n)}= 
\frac{1}{2 n^2}I_{n},
\]
with $I_n$ the $n\times n$ identity matrix. 
Note that, for all $n\in\N$, $\sigma^{(n)}$ is not absolutely continuous to $\rho^{(n)}$ because the singular part 
\[
\s^{(n)^{\perp}}
=\begin{pmatrix}
0 & 0 & 0\\
0 & 0 & 0\\
0 & 0 & \s_{2}^{(n)}-\s_{1}^{(n)^*}\s_{0}^{(n)^{-1}}\s_{1}^{(n)} 
\end{pmatrix}
\]
is nonzero.
However, $\s^{(n)}$ is ``asymptotically'' absolutely continuous to $\rho^{(n)}$ in that $\lim_{n\to\infty}\s^{(n)^{\perp}}=0$.
Furthermore, the $(2,2)$th blocks $\rho_{0}^{(n)}$ and $\sigma_{0}^{(n)}$ are identical, up to scaling, to the states studied in Example \ref{ex:pseudoLikelihood}. 
Therefore, it is expected that $\sigma^{(n)}$ would be contiguous to $\rho^{(n)}$. 
This expectation is justified by the following more general assertion.

\begin{thm}\label{thm:intersect}
For each $n\in\N$, let
\[
\rho^{(n)}
=\begin{pmatrix}\rho_{2}^{(n)} & \rho_{1}^{(n)} & 0\\
\rho_{1}^{(n)^{*}} & \rho_{0}^{(n)} & 0\\
0 & 0 & 0 \end{pmatrix},
\qquad
\sigma^{(n)}
=\begin{pmatrix}0 & 0 & 0\\
0 & \sigma_{0}^{(n)} & \sigma_{1}^{(n)}\\
0 & \sigma_{1}^{(n)^{*}} & \sigma_{2}^{(n)}
\end{pmatrix}
\]
be quantum states on a Hilbert space $\H^{(n)}$ represented by block matrices, where
\[
\begin{pmatrix}\rho_{2}^{(n)} & \rho_{1}^{(n)}\\
\rho_{1}^{(n)^{*}} & \rho_{0}^{(n)}\\
\end{pmatrix}
>0,
\qquad
\begin{pmatrix}
\sigma_{0}^{(n)} & \sigma_{1}^{(n)}\\
\sigma_{1}^{(n)^{*}} & \sigma_{2}^{(n)}
\end{pmatrix}>0.
\]
Suppose that
\[
 \liminf_{n\to\infty}\Tr\rho_{0}^{(n)}>0, \qquad
 \lim_{n\to\infty}\Tr\sigma_{0}^{(n)}=1, 
\]
and
\[
 \frac{\rho_{0}^{(n)}}{\Tr\rho_{0}^{(n)}} \vartriangleright \frac{\sigma_{0}^{(n)}}{\Tr\sigma_{0}^{(n)}}.
\]
Then we have $\rho^{(n)}\vartriangleright \sigma^{(n)}$. 
\end{thm}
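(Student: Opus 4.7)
The plan is to verify Definition \ref{def:qcontiguity} directly by lifting the hypothesized sub-state contiguity up to the full states. Using the orthogonal decomposition $\H^{(n)}=\H_1^{(n)}\oplus\H_2^{(n)}\oplus\H_3^{(n)}$ that matches the given block structure (exactly the setup of Section \ref{sec:Lebesgue}, Case~2), the square-root likelihood ratio of $\sigma^{(n)}$ relative to $\rho^{(n)}$ can be written explicitly as
\[
R^{(n)}=\begin{pmatrix}
0 & 0 & 0\\
0 & S^{(n)} & S^{(n)}\sigma_0^{(n)-1}\sigma_1^{(n)}\\
0 & \sigma_1^{(n)*}\sigma_0^{(n)-1}S^{(n)} & \sigma_1^{(n)*}\sigma_0^{(n)-1}S^{(n)}\sigma_0^{(n)-1}\sigma_1^{(n)}
\end{pmatrix},\qquad S^{(n)}:=\sigma_0^{(n)}\#\rho_0^{(n)-1},
\]
and one checks $R^{(n)}\rho^{(n)} R^{(n)}=(\sigma^{(n)})^{ac}$. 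Condition (i) of Definition \ref{def:qcontiguity} then reads $\Tr(\sigma^{(n)})^{ac}=\Tr\sigma_0^{(n)}+\Tr\sigma_1^{(n)*}\sigma_0^{(n)-1}\sigma_1^{(n)}\to 1$; since $\Tr\sigma_0^{(n)}\to 1$ forces $\Tr\sigma_2^{(n)}\to 0$, the Schur-complement bound $\sigma_2^{(n)}\ge \sigma_1^{(n)*}\sigma_0^{(n)-1}\sigma_1^{(n)}$ yields $\Tr\sigma_1^{(n)*}\sigma_0^{(n)-1}\sigma_1^{(n)}\to 0$, so (i) holds.

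To handle condition (ii) I would import the hypothesized sub-state contiguity. Writing $r_n:=\Tr\rho_0^{(n)}$, $s_n:=\Tr\sigma_0^{(n)}$, $\hat\rho_0^{(n)}:=\rho_0^{(n)}/r_n$, $\hat\sigma_0^{(n)}:=\sigma_0^{(n)}/s_n$, Definition \ref{def:qcontiguity} applied to $\hat\sigma_0^{(n)}\vartriangleleft\hat\rho_0^{(n)}$ gives an $L^2$-infinitesimal sequence $\hat O^{(n)}$ under $\hat\rho_0^{(n)}$ such that $\hat S^{(n)}+\hat O^{(n)}\ge 0$ has uniformly integrable square, where $\hat S^{(n)}:=\hat\sigma_0^{(n)}\#\hat\rho_0^{(n)-1}$. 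The scaling identity $(tA)\#(uB)=\sqrt{tu}(A\#B)$ yields $\hat S^{(n)}=\sqrt{r_n/s_n}\,S^{(n)}$, and by hypothesis $\lambda_n:=\sqrt{s_n/r_n}$ is bounded above and away from $0$. I then set
\[
\overline{R}^{(n)}:=\begin{pmatrix} 0 & 0 & 0\\ 0 & \overline{S}^{(n)} & 0\\ 0 & 0 & 0\end{pmatrix},\qquad
\overline{S}^{(n)}:=\lambda_n(\hat S^{(n)}+\hat O^{(n)})=S^{(n)}+\lambda_n\hat O^{(n)},
\]
which is positive, and $O^{(n)}:=\overline{R}^{(n)}-R^{(n)}$.

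It remains to verify that $O^{(n)}=o_{L^2}(\rho^{(n)})$ and that $\overline{R}^{(n)2}$ is uniformly integrable under $\rho^{(n)}$. A direct block computation (only the $(2,2)$-block of $\rho^{(n)} O^{(n)2}$ contributes to the trace because of the zeros in rows/columns $1$ and $3$ of $\rho^{(n)}$ and $O^{(n)}$) gives
\[
\Tr\rho^{(n)} O^{(n)2}=\Tr\rho_0^{(n)}(\overline{S}^{(n)}-S^{(n)})^2+\Tr\rho_0^{(n)} S^{(n)}\sigma_0^{(n)-1}\sigma_1^{(n)}\sigma_1^{(n)*}\sigma_0^{(n)-1}S^{(n)};
\]
the first summand equals $s_n\Tr\hat\rho_0^{(n)}\hat O^{(n)2}\to 0$ by $L^2$-infinitesimality of $\hat O^{(n)}$, while the second, using $S^{(n)}\rho_0^{(n)}S^{(n)}=\sigma_0^{(n)}$, collapses to $\Tr\sigma_1^{(n)*}\sigma_0^{(n)-1}\sigma_1^{(n)}\to 0$. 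For uniform integrability, the block-diagonal form of $\overline{R}^{(n)}$ reduces the problem to $\overline{S}^{(n)2}$ under $\rho_0^{(n)}$, and a change of variables gives
\[
\Tr\rho_0^{(n)}\overline{S}^{(n)2}\bigl(I-\mathbbm{1}_M(\overline{S}^{(n)})\bigr)=s_n\Tr\hat\rho_0^{(n)}(\hat S^{(n)}+\hat O^{(n)})^2\bigl(I-\mathbbm{1}_{M/\lambda_n}(\hat S^{(n)}+\hat O^{(n)})\bigr),
\]
which is controlled uniformly in $n$ by the hypothesized uniform integrability combined with the two-sided boundedness of $\lambda_n$ and boundedness of $s_n$.

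The main obstacle I anticipate is the transfer step itself: justifying that the \emph{singular-like} entries of $R^{(n)}$ in the $(2,3)$, $(3,2)$ and $(3,3)$ blocks may be discarded without destroying $L^2$-approximation works only because of the decay $\Tr\sigma_1^{(n)*}\sigma_0^{(n)-1}\sigma_1^{(n)}\le \Tr\sigma_2^{(n)}\to 0$, and the transfer of uniform integrability between $\rho_0^{(n)}$ and $\hat\rho_0^{(n)}$ (and between $S^{(n)}$ and $\hat S^{(n)}$) hinges on $\liminf_n r_n>0$. Both play off the two quantitative hypotheses of the theorem in a tight way, so the arithmetic bookkeeping is where care is needed. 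No appeal to Theorem~\ref{thm:contiguity_ac} or Theorem~\ref{thm:contiguity_pure} is required; the verification is made entirely from Definition~\ref{def:qcontiguity}.
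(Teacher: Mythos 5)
Your proof is correct and follows essentially the same route as the paper's: both verify Definition \ref{def:qcontiguity} directly by rescaling the central block by $\kappa^{(n)}=\sqrt{\Tr\sigma_{0}^{(n)}/\Tr\rho_{0}^{(n)}}$, importing the modification supplied by the hypothesized contiguity of the normalized $(2,2)$ blocks, and discarding the remaining blocks of $R^{(n)}$ via the bound $\Tr\sigma_{1}^{(n)*}\sigma_{0}^{(n)-1}\sigma_{1}^{(n)}\le\Tr\sigma_{2}^{(n)}\to 0$. The only cosmetic difference is that you use the explicit geometric-mean form of the square-root likelihood ratio from Section \ref{sec:Lebesgue}, whereas the paper argues with a generic version and extracts the same estimates from the defining inequality $R^{(n)}\rho^{(n)}R^{(n)}\le\sigma^{(n)}$.
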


The proof of Theorem \ref{thm:intersect} is deferred to Appendix \ref{sec:proofs}.

\subsection{Contiguity for tensor product states}

Let us consider tensor product states
\[
 \rho^{(n)}:=\bigotimes_{i=1}^n \rho_i,\qquad
 \sigma^{(n)}:=\bigotimes_{i=1}^n \sigma_i,
\]
where $\rho_i$ and $\sigma_i$ are quantum states on a finite dimensional Hilbert space $\H_i$. 
Suppose that $\sigma_i \ll \rho_i$ for all $i$. 
Then $\sigma^{(n)}\ll \rho^{(n)}$ for all $n\in\N$.
It is thus natural to enquire whether or not $\sigma^{(n)}$ is contiguous with respect to $\rho^{(n)}$.
The answer is given by the following

\begin{thm}\label{thm:kakutani}
Let $\rho_i$ and $\sigma_i$ be quantum states on a finite dimensional Hilbert space $\H_i$ that satisfy $\sigma_i \ll \rho_i$, and let 
\[
 \rho^{(n)}:=\bigotimes_{i=1}^n \rho_i,\qquad
 \sigma^{(n)}:=\bigotimes_{i=1}^n \sigma_i.
\]
Then $\sigma^{(n)} \vartriangleleft \rho^{(n)}$ if and only if
\begin{equation}\label{eqn:kakutaniCriterion}
  \prod_{i=1}^\infty \Tr\rho_i R_i>0, 
\end{equation}
or equivalently
\begin{equation}\label{eqn:kakutaniCriterion2}
 \sum_{i=1}^\infty (1-\Tr\rho_i R_i)<\infty,
\end{equation}
where $R_i$ is (a version of) the square-root likelihood ratio $\ratio{\s_i}{\r_i}$.
\end{thm}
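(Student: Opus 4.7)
The plan is to reduce the assertion to the classical Kakutani dichotomy by constructing a classical non-negative martingale $(W_n)$ whose law under $\rho^{(n)}$ mirrors the spectral statistics of $R^{(n)^{2}}$. Since $\sigma_i\ll\rho_i$, Lemma \ref{lem:2} yields a version $R_i\ge 0$ of $\ratio{\sigma_i}{\rho_i}$ with $\sigma_i = R_i\rho_i R_i$; then $R^{(n)} := R_1\otimes\cdots\otimes R_n$ is a version of $\ratio{\sigma^{(n)}}{\rho^{(n)}}$, and $\Tr\rho^{(n)} R^{(n)^{2}}=\prod_{i=1}^n\Tr\sigma_i=1$, so condition (i) of Definition \ref{def:qcontiguity} is automatic. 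Let $R_i = \sum_j r_{i,j}\ket{e_{i,j}}\bra{e_{i,j}}$ be a spectral decomposition, and define independent non-negative classical random variables $(Z_i)$ by $P(Z_i = r_{i,j}^{2}) = \langle e_{i,j}|\rho_i|e_{i,j}\rangle$. Evaluating on the product eigenbasis $\ket{e_{1,j_1}}\otimes\cdots\otimes\ket{e_{n,j_n}}$ yields the key identity
\[
\Tr\rho^{(n)} f\bigl(R^{(n)^{2}}\bigr) = E\bigl[f(W_n)\bigr], \qquad W_n := Z_1 Z_2\cdots Z_n,
\]
for every bounded Borel $f$. In particular $E[Z_i]=1$ and $E[\sqrt{Z_i}] = \sum_j r_{i,j}\langle e_{i,j}|\rho_i|e_{i,j}\rangle = \Tr\rho_i R_i =: a_i \in (0,1]$ (the upper bound by Cauchy-Schwarz, the lower by $\Tr\sigma_i=1$). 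Hence $(W_n)$ is a non-negative $L^{1}$-martingale of mean one, and uniform integrability of $R^{(n)^{2}}$ under $\rho^{(n)}$ is equivalent, via the identity above, to uniform integrability of $(W_n)$ in the classical sense.

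\emph{Sufficiency.} Assume $\prod_i a_i > 0$. By the classical Kakutani theorem the martingale $(W_n)$ is uniformly integrable; hence so is $R^{(n)^{2}}$ under $\rho^{(n)}$. Condition (ii) of Definition \ref{def:qcontiguity} therefore holds with $O^{(n)} = 0$, establishing $\sigma^{(n)}\vartriangleleft\rho^{(n)}$.

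\emph{Necessity.} Assume $\sigma^{(n)}\vartriangleleft\rho^{(n)}$, and let $O^{(n)}=o_{L^{2}}(\rho^{(n)})$ realize the contiguity, so $\bar R^{(n)} := R^{(n)}+O^{(n)}\ge 0$ and $\bar R^{(n)^{2}}$ is uniformly integrable under $\rho^{(n)}$. Cauchy-Schwarz gives $|\Tr\rho^{(n)}R^{(n)} O^{(n)}|\le\sqrt{\Tr\rho^{(n)}R^{(n)^{2}}}\sqrt{\Tr\rho^{(n)} O^{(n)^{2}}}=o(1)$ and $|\Tr\rho^{(n)} O^{(n)}|\le\sqrt{\Tr\rho^{(n)} O^{(n)^{2}}}=o(1)$, whence $\Tr\rho^{(n)}\bar R^{(n)^{2}} = 1+o(1)$ and $\Tr\rho^{(n)}\bar R^{(n)} = \Tr\rho^{(n)} R^{(n)} + o(1) = \prod_{i=1}^n a_i + o(1)$. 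By uniform integrability, choose $M$ such that $\sup_n \Tr\rho^{(n)}\bar R^{(n)^{2}}\bigl(I-\mathbbm{1}_M(\bar R^{(n)})\bigr)<\tfrac{1}{2}$; then $\Tr\rho^{(n)}\bar R^{(n)^{2}}\mathbbm{1}_M(\bar R^{(n)})\ge\tfrac{1}{4}$ for large $n$. Since $\bar R^{(n)^{2}}\mathbbm{1}_M(\bar R^{(n)}) \le M\bar R^{(n)}\mathbbm{1}_M(\bar R^{(n)})$ as operators (both are functions of $\bar R^{(n)}$, whose spectrum on the range of $\mathbbm{1}_M(\bar R^{(n)})$ lies in $[0,M]$), one has
\[
\Tr\rho^{(n)}\bar R^{(n)}\ge\Tr\rho^{(n)}\bar R^{(n)}\mathbbm{1}_M(\bar R^{(n)})\ge\frac{1}{4M}.
\]
Combining, $\prod_{i=1}^n a_i\ge\tfrac{1}{8M}$ for large $n$, and the partial products are non-increasing, so $\prod_{i=1}^\infty a_i\ge\tfrac{1}{8M}>0$. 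The equivalence of \eqref{eqn:kakutaniCriterion} and \eqref{eqn:kakutaniCriterion2} is the standard fact that, for $x_i\in(0,1]$, $\prod x_i>0\iff\sum(1-x_i)<\infty$.

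The principal obstacle is the $L^{2}$-infinitesimal freedom $O^{(n)}$ built into Definition \ref{def:qcontiguity}: a priori it could distort the spectral picture beyond what the tensor-product structure directly controls. The resolution is that the Cauchy-Schwarz estimates above show the modification shifts both first and second moments only by $o(1)$, so the quantitative lower bound on $\Tr\rho^{(n)}\bar R^{(n)}$ forced by uniform integrability transfers to $\Tr\rho^{(n)} R^{(n)} = \prod_{i=1}^n a_i$ without any fine analysis of $O^{(n)}$ itself.
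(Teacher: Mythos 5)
Your proof is correct and takes essentially the same route as the paper's: necessity is the same truncation estimate (using $\overline{R}^{(n)^{2}}\mathbbm{1}_{M}(\overline{R}^{(n)})\le M\overline{R}^{(n)}$ together with Cauchy--Schwarz control of the $O^{(n)}$ contributions), and sufficiency reduces the quantum uniform integrability of $R^{(n)^{2}}$ to the classical Kakutani theorem via the product eigenbasis of $\bigotimes_i R_i$, exactly as in the paper. The only cosmetic difference is that you invoke Kakutani in its martingale form ($W_n=Z_1\cdots Z_n$ with $E[\sqrt{Z_i}]=\Tr\rho_i R_i$), whereas the paper states the same reduction in terms of absolute continuity of the infinite product measures $\prod_i Q_i$, $\prod_i P_i$ and uniform integrability of the classical likelihood ratio process.
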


The proof of Theorem \ref {thm:kakutani} is deferred to Appendix \ref{sec:proofs}.

\begin{rem}
Theorem \ref{thm:kakutani} bears obvious similarities to Kakutani's theorem for infinite product measures \cite{{Kakutani},{Williams}} and its noncommutative extension due to Bures \cite{Bures}. 
In fact, by using Remark \ref{rem:eqrtLikelihood}, conditions \eqref{eqn:kakutaniCriterion} and \eqref{eqn:kakutaniCriterion2} are rewritten as
\[
 \prod_{i=1}^\infty \Tr \sqrt{\sqrt{\sigma_i}\,\rho_i\,\sqrt{\sigma_i}}>0
 \qquad\mbox{and}\qquad
 \sum_{i=1}^\infty \left(1-\Tr \sqrt{\sqrt{\sigma_i}\,\rho_i\,\sqrt{\sigma_i}} \right)<\infty.
\]
The summand in the latter condition is identical, up to a factor of $2$, to the square of the Bures distance between $\r_i$ and $\s_i$. 
The main difference is that we are dealing with sequences of finite tensor product states rather than infinite tensor product states.
\end{rem}

Let us give a simple example that demonstrates the criterion established in Theorem \ref{thm:kakutani}.
Let
\[
\rho=\frac{1}{2}\begin{pmatrix} 1 & 0 \\ 0 & 1 \end{pmatrix},\qquad
\sigma_t=\frac{1}{4t^2+2}\begin{pmatrix} 2t^2+2t+1 & 2t \\ 2t & 2t^2-2t+1 \end{pmatrix}, 
\]
where $t$ is a parameter with $t\ge 1$, and let us consider three sequences of tensor product states: 
\[
 \rho^{(n)}:=\bigotimes_{i=1}^n \rho,
 \qquad
 \sigma^{(n)}:=\bigotimes_{i=1}^n \sigma_i,
 \qquad
 \tilde{\sigma}^{(n)}:=\bigotimes_{i=1}^n \sigma_{\sqrt{i}}. 
\]
Since $\sigma_t\to\rho$ as $t\to\infty$, it is meaningful to enquire whether or not $\sigma^{(n)}$ and $\tilde{\sigma}^{(n)}$ are contiguous to $\rho^{(n)}$. 
As a matter of fact, $\sigma^{(n)}$ is contiguous to $\rho^{(n)}$, whereas $\tilde{\sigma}^{(n)}$ is not; this is proved as follows.
The square-root likelihood ratio $R_t=\ratio{\s_t}{\r}$ is 
\[
 R_t=\frac{1}{\sqrt{4t^2+2}}\begin{pmatrix} 2t+1 & 1 \\ 1 & 2t-1 \end{pmatrix},
\]
and thus
\[
 \Tr\rho R_t=\sqrt{\frac{2t^2}{2t^2+1}}.
\]
In view of the criterion \eqref{eqn:kakutaniCriterion2}, it suffices to verify that 
\[
  \sum_{n=1}^\infty \left(1-\sqrt{\frac{2n^2}{2n^2+1}} \right)<\infty
  \qquad\mbox{and}\qquad
  \sum_{n=1}^\infty \left(1-\sqrt{\frac{2n}{2n+1}} \right)=\infty,
\]
and this is elementary. 
These results could be paraphrased by saying that the sequence $\sigma_{n}$ converges to $\r$ quickly enough for $\sigma^{(n)}$ to be contiguous with respect to $\rho^{(n)}$,
whereas the sequence $\sigma_{\sqrt{n}}$ does not.

\subsection{Local asymptotic normality}\label{sec:qLAN}

In \cite{YFG}, we formulated a direct analogue of the weak LAN in the quantum domain.
However, that formulation was not fully satisfactory because it was applicable only to quantum statistical models that comprise mutually absolutely continuous density operators.
Here we enlarge the scope of weak q-LAN to a much wider class of models by taking advantage of the quantum Lebesgue decomposition and quantum contiguity.

\begin{defn}\label{def:QLAN}
For each $n\in\N$, let $\S^{(n)}=\left\{ \rho_{\theta}^{(n)} \left|\,\theta\in\Theta\subset\R^{d}\right.\right\}$ be a $d$-dimensional quantum statistical model on a finite dimensional Hilbert space $\H^{(n)}$,
where $\Theta$ is an open set.
We say $\S^{(n)}$ is {\em locally asymptotically normal} at $\theta_{0}\in\Theta$ if 
\begin{itemize}
\item [{\rm (i)}] there exist a list $\Delta^{(n)}=\left(\Delta_{1}^{(n)},\dots,\Delta_{d}^{(n)}\right)$ of observables on each $\H^{(n)}$ that satisfies
\[
\Delta^{(n)}\convd{\;\; \rho_{\theta_{0}}^{(n)}} \;N(0,J),
\]
where $J$ is a $d\times d$ Hermitian positive semidefinite matrix with ${\rm Re}\,J>0$, and
\item [{\rm (ii)}] the square-root likelihood ratio 
$R_{h}^{(n)}=\ratio{\rho_{\theta_{0}+h/\sqrt{n}}^{(n)}}{\rho_{\theta_{0}}^{(n)}}$ is expanded in $h\in\R^{d}$ as
\[
R_{h}^{(n)}
= \exp\left\{\frac{1}{2}\left( h^{i}\Delta_{i}^{(n)}-\frac{1}{2}\left(J_{ij}h^{i}h^{j}\right)I^{(n)} 
+ o_D \left(h^i \Delta^{(n)}_i,\rho_{\theta_{0}}^{(n)}\right) \right)\right\}
- o_{L^2}\left(\rho_{\theta_{0}}^{(n)}\right),
\]
where $I^{(n)}$ is the identity operator on $\H^{(n)}$.
\end{itemize}
\end{defn}

Note that,
in contrast to the previous paper \cite{YFG},
we here define the local asymptotic normality in terms of the square-root likelihood ratio rather than the log-likelihood ratio; 
in particular, we do not assume that $\rho_\theta^{(n)}$ is mutually absolutely continuous with respect to $\rho_{\theta_0}^{(n)}$. 
Moreover, the present definition is pertinent to the setting for the quantum Le Cam third lemma (Theorem \ref{thm:LeCam3clt}).
In fact, we have the following 

\begin{cor}[Quantum Le Cam third lemma under q-LAN]
\label{cor:LeCam3qlan}
Let $\S^{(n)}$  be as in Definition \ref{def:QLAN}, and let $X^{(n)}=\left(X_{1}^{(n)},\dots,X_{d'}^{(n)}\right)$ be a list of observables on $\H^{(n)}$.
Suppose that $\S^{(n)}$ is locally asymptotically normal at $\theta_{0}\in\Theta$ and
\begin{equation}\label{eq:joint}
\begin{pmatrix}X^{(n)}\\
\Delta^{(n)}
\end{pmatrix}
\convd{\;\; \rho_{\theta_{0}}^{(n)}}
N\left(\begin{pmatrix}0\\
0
\end{pmatrix},\begin{pmatrix}\Sigma & \tau\\
\tau* & J
\end{pmatrix}\right).
\end{equation}
Here, $\Sigma$ and $J$ are Hermitian positive semidefinite matrices of size $d' \times d'$ and $d\times d$, respectively, with ${\rm Re}\,J>0$, and $\tau$ is a complex matrix of size $d'\times d$. 
Then 
\begin{equation}\label{eqn:cor7.2}
\rho_{\theta_{0}+h/\sqrt{n}}^{(n)}\triangleleft\rho_{\theta_{0}}^{(n)}
\qquad\mbox{and}\qquad
X^{(n)}\convd{\;\;\rho_{\theta_0+h/\sqrt{n}}^{(n)}} \;N(({\rm Re}\,\tau) h,\Sigma) 
\end{equation}
for all $h\in\R^{d}$. 
\end{cor}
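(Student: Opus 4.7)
The plan is to deduce the corollary from the abstract quantum Le Cam third lemma (Theorem~\ref{thm:LeCam3clt}) by verifying its hypotheses from the structure provided by q-LAN. Setting $\r^{(n)} := \rho_{\theta_0}^{(n)}$ and $\s^{(n)} := \rho_{\theta_0 + h/\sqrt n}^{(n)}$, the square-root likelihood ratio $R^{(n)} := \ratio{\s^{(n)}}{\r^{(n)}}$ equals $R_h^{(n)}$, and Definition~\ref{def:QLAN} supplies an $L^2$-infinitesimal sequence $O^{(n)}$ under $\r^{(n)}$ together with a D-infinitesimal sequence $\tilde O^{(n)} = o_D(h^i\Delta_i^{(n)}, \r^{(n)})$ such that
\[
\bar{R}^{(n)} := R^{(n)} + O^{(n)} = \exp\!\left\{\tfrac{1}{2}\left(h^i\Delta_i^{(n)} - \tfrac{1}{2}J_{ij}h^ih^j\,I^{(n)} + \tilde O^{(n)}\right)\right\},
\]
which is strictly positive by construction. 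Taking the logarithm gives $2\log\bar{R}^{(n)} - \tilde O^{(n)} = h^i\Delta_i^{(n)} - \tfrac{1}{2}J_{ij}h^ih^j\,I^{(n)}$, a mere affine rescaling of $h^i\Delta_i^{(n)}$.

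Combining the assumed joint convergence \eqref{eq:joint} with this affine rescaling in the second coordinate yields
\[
\begin{pmatrix} X^{(n)} \\ 2\log\bar{R}^{(n)} - \tilde O^{(n)} \end{pmatrix}
\convd{\r^{(n)}}
N\!\left(\begin{pmatrix} 0 \\ -\tfrac{1}{2}J_{ij}h^ih^j \end{pmatrix},
\begin{pmatrix} \Sigma & \tau h \\ (\tau h)^{*} & J_{ij}h^ih^j \end{pmatrix}\right),
\]
where $s^2 := J_{ij}h^ih^j$ is real and strictly positive because $h \in \R^d$ and ${\rm Re}\,J > 0$. The resulting block covariance is Hermitian positive semidefinite, being the image of that of \eqref{eq:joint} under the real linear map $(x, y) \mapsto (x, h^\top y)$. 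Thus the hypothesis \eqref{eq:joint_g} of Theorem~\ref{thm:LeCam3clt} holds with $\mu = 0$ and $\kappa = \tau h$.

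The only subtle point is that Theorem~\ref{thm:LeCam3clt} requires $\tilde O^{(n)}$ to be D-infinitesimal with respect to $\log\bar{R}^{(n)}$, whereas Definition~\ref{def:QLAN} only provides it with respect to $h^i\Delta_i^{(n)}$. Since $\log\bar{R}^{(n)} = \tfrac{1}{2}(h^i\Delta_i^{(n)} + \tilde O^{(n)}) - \tfrac{1}{4}J_{ij}h^ih^j\,I^{(n)}$ is affine in $h^i\Delta_i^{(n)}$ and $\tilde O^{(n)}$ itself, with a central scalar term that commutes through all factors, the defining relation of $o_D$ in Definition~\ref{defn:infinitesimal}, applied with the substitution $\xi'_t = \xi_t/2$, $\eta'_t = \xi_t/2 + \eta_t$, translates the D-infinitesimality from one reference observable to the other. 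This transfer is the main technical obstacle; the rest is routine. With all hypotheses verified, Theorem~\ref{thm:LeCam3clt} delivers $\s^{(n)} \triangleleft \r^{(n)}$ and $X^{(n)} \convd{\s^{(n)}} N(\mu + {\rm Re}(\kappa), \Sigma) = N(({\rm Re}\,\tau)h, \Sigma)$, which are precisely the two assertions of \eqref{eqn:cor7.2}.
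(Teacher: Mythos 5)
Your proof is correct and follows essentially the same route as the paper: set $L^{(n)}=2\log(R_h^{(n)}+O^{(n)})-\tilde O^{(n)}=h^i\Delta_i^{(n)}-\tfrac12 J_{ij}h^ih^j I^{(n)}$, deduce the joint convergence \eqref{eq:joint_g} with $\mu=0$, $\kappa=\tau h$, $s^2={}^t h J h$ from \eqref{eq:joint}, and invoke Theorem \ref{thm:LeCam3clt}. Your explicit check that D-infinitesimality with respect to $h^i\Delta_i^{(n)}$ transfers to $\log(R_h^{(n)}+O^{(n)})$ (via the substitution $\xi_t'=\xi_t/2$, $\eta_t'=\xi_t/2+\eta_t$ and factoring out the scalar phase) is a point the paper leaves implicit, and your argument for it is sound.
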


\begin{proof}
From the definition of q-LAN, the square-root likelihood ratio is written as
\[
R_{h}^{(n)}=\exp\left\{ \frac{1}{2}\left(h^{i}\Delta_{i}^{(n)}-\frac{1}{2}J_{ij}h^{i}h^{j}I^{(n)}+{\tilde O}^{(n)}\right)\right\} -O^{(n)}
\]
where ${\tilde O}^{(n)}=o_D\left(h^{i}\Delta_{i}^{(n)},\rho_{\theta_{0}}^{(n)}\right)$
and $O^{(n)}=o_{L^{2}}\left(\rho_{\theta_{0}}^{(n)}\right)$. 
Let
\[
L^{(n)}:=2\log(R_{h}^{(n)}+O^{(n)})-{\tilde O}^{(n)}=h^{i}\Delta_{i}^{(n)}-\frac{1}{2}J_{ij}h^{i}h^{j}I^{(n)}. 
\]
Then \eqref{eq:joint} implies that
\[
\begin{pmatrix}X^{(n)}\\
L^{(n)}
\end{pmatrix}\convd{\rho_{\theta_{0}}^{(n)}}N\left(\begin{pmatrix}0\\
-\frac{1}{2}\,^{t}hJh
\end{pmatrix},\begin{pmatrix}\Sigma & \tau h\\
(\tau h)^{*} & \,^{t}hJh
\end{pmatrix}\right).
\]
Thus, \eqref{eqn:cor7.2} immediately follows from Theorem \ref{thm:LeCam3clt}.
\end{proof}

A prototype of Corollary \ref{cor:LeCam3qlan} first appeared in \cite[Theorem 2.9]{YFG} under the assumptions that each model $\S^{(n)}$ comprised mutually absolutely continuous density operators and the pairs $(\S^{(n)}, X^{(n)})$ were jointly q-LAN. 
In contrast, Corollary \ref{cor:LeCam3qlan} makes no use of such restrictive assumptions, and is a straightforward consequence of a much general result (Theorem \ref{thm:LeCam3clt}).
This is a notable achievement, demonstrating the advantages and usefulness of the present formulation based on the quantum Lebesgue decomposition and contiguity.

Now we restrict ourselves to the i.i.d case. 
In classical statistics, it is known that the i.i.d.~extension of a model $\{P_{\theta}  \,|\, \theta \in \Theta \subset \R^d \}$ on a measure space $(\O, \F,\m)$ having densities $p_\theta$ with respect to $\mu$ is LAN at $\theta_0$ if the model is {\em differentiable in quadratic mean} at $\theta_0$ \cite[p.~93]{Vaart}, that is, if there are random variables $\ell_1,\dots,\ell_d$ that satisfy
\[
\int_\O 
\left[
\sqrt{p_{\theta_0+h}} - \sqrt{p_{\theta_0}} - \frac{1}{2} h^i \ell_i \sqrt{p_{\theta_0}}
\right]^2 d\mu = o(\|h\|^2)
\]
as $ h\rightarrow 0$.
This condition is rewritten as
\begin{equation}\label{eq:quadratic2}
\int_\O
\left[
\sqrt{ \frac{p_{\theta_0+h}^{ac}}{p_{\theta_0}} }-1- \frac{1}{2} h^i \ell_i 
\right]^2 p_{\theta_0} d\mu
+\int_\O p_{\theta_0+h}^\perp d\mu
= o(\|h\|^2),
\end{equation}
where
\[
 p_{\theta_0+h}^{ac}(\o):=\left\{\array{ll} p_{\theta_0+h}(\o),& \o\in\O_0 \\ 0, & \o\notin \O_0\endarray\right.
\]
and
\[
 p_{\theta_0+h}^\perp(\o):=\left\{\array{ll} 0,& \o\in\O_0 \\ p_{\theta_0+h}(\o), & \o\notin \O_0\endarray\right.
\]
with $\O_0:=\{\o\in\O\,|\, p_{\th_0}(\o)>0\}$. 
The first term in the left-hand side of \eqref{eq:quadratic2} pertains to the differentiability of the (square-root) likelihood ratio at $h=0$, while the second term to the negligibility of the singular part. 

The quantum counterpart of this characterization is given by the following 

\begin{thm}[q-LAN for i.i.d.~models]\label{thm:iid}
Let $\left\{ \rho_{\theta}\left|\,\theta\in\Theta\subset\R^{d}\right.\right\} $
be a quantum statistical model on a finite dimensional Hilbert space $\H$,  
and suppose that, for some $\theta_0\in\Theta$, 
a version $R_{h}$ of the square-root likelihood ratio
$\ratio{\rho_{\theta_{0}+h}}{\rho_{\theta_{0}}}$
is differentiable at $h=0$, 
and the absolutely continuous part of $\rho_{\theta_{0}+h}$ with respect to $\rho_{\theta_0}$ satisfies
\begin{equation}\label{eq:oh2}
\Tr{\rho_{\theta_{0}} R_h^2}
=
1-o(\|h\|^{2}).
\end{equation}
Then
$\left\{ \rho_{\theta}^{\otimes n}\left|\,\theta\in\Theta\subset\R^{d}\right.\right\} $
is locally asymptotically normal at $\theta_{0}$, 
in that
\[
\Delta_{i}^{(n)}:=\frac{1}{\sqrt{n}}\sum_{k=1}^{n}I^{\otimes(k-1)}\otimes L_{i}\otimes I^{\otimes(n-k)},
\]
satisfies (i) and (ii) in Definition \ref{def:QLAN}. 
Here $L_{i}$ is (a version of) the $i$th symmetric logarithmic derivative at $\theta_{0}$, 
and $J=(J_{ij})$ is given by
\[
 J_{ij}:=\Tr{\rho_{\theta_{0}}L_{j}L_{i}}. 
\]

Further, given observables $\{B_i\}_{1\le i\le d'}$ on $\H$ satisfying $\Tr \r_{\theta_0} B_i=0$ for $i=1,\dots,d'$,
let $X^{(n)}=\{X_i^{(n)}\}_{1\le i\le d'}$ be observables on $\H^{\otimes n}$ defined by
\[
X_i^{(n)}:=\frac{1}{\sqrt{n}}\sum_{k=1}^n I^{\otimes(k-1)}\otimes B_i\otimes I^{\otimes (n-k)}.
\]
Then we have
\begin{equation}\label{eq:iidLeCam3}
\r^{\otimes n}_{\th_0+h/\sqrt{n}} \vartriangleleft \r^{\otimes n}_{\th_0}
\qquad \mbox{and}\qquad 
X^{(n)}
\convd{\rho_{\theta_{0}+h/\sqrt{n}}^{\otimes n}}
N(({\rm Re}\,\tau)h,\,\Sigma)
\end{equation}
for $h\in\R^{d}$, 
where $\Sigma$ is the $d' \times d'$ positive semidefinite matrix defined by $\Sigma_{ij}=\Tr\r_{\theta_0} B_jB_i$ 
and $\tau$ is the $d' \times d$ matrix defined by $\tau_{ij}=\Tr\r_{\theta_0}L_j B_i$.
\end{thm}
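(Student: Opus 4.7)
The plan is to verify the two conditions of Definition~\ref{def:QLAN} for the i.i.d.\ sequence $\{\rho_\theta^{\otimes n}\}$ at $\theta_0$, and then deduce \eqref{eq:iidLeCam3} from the joint quantum central limit theorem together with Corollary~\ref{cor:LeCam3qlan}.

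First I would fix a convenient version of the square-root likelihood ratio on the product space. Because $\rho_{\theta_0+h}^{ac}=R_h\rho_{\theta_0}R_h$ and $\rho_{\theta_0+h}^{\perp}\perp\rho_{\theta_0}$, tensorising shows that $R_{h/\sqrt n}^{\otimes n}$ is a valid version of $\ratio{\rho_{\theta_0+h/\sqrt n}^{\otimes n}}{\rho_{\theta_0}^{\otimes n}}$, with absolutely continuous part $(\rho_{\theta_0+h/\sqrt n}^{ac})^{\otimes n}$. The hypothesis $\Tr\rho_{\theta_0}R_h^{2}=1-o(\|h\|^{2})$ then gives $\Tr\rho_{\theta_0}^{\otimes n}(R_{h/\sqrt n}^{\otimes n})^{2}=(\Tr\rho_{\theta_0}R_{h/\sqrt n}^{2})^{n}=(1-o(1/n))^{n}\to 1$, so the residual singular mass of $\rho_{\theta_0+h/\sqrt n}^{\otimes n}$ vanishes in trace and is absorbed by the $-o_{L^{2}}(\rho_{\theta_0}^{(n)})$ correction in the q-LAN expansion.

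The key analytic input is a trace identity for $L_h:=2\log R_h$. Differentiating $R_h\rho_{\theta_0}R_h=\rho_{\theta_0+h}^{ac}$ at $h=0$ and comparing with the SLD equation $\partial_i\rho_\theta|_{\theta_0}=\tfrac12(L_i\rho_{\theta_0}+\rho_{\theta_0}L_i)$ gives $\partial_iR_h|_{0}=\tfrac12 L_i$ after the support ambiguity is resolved by taking $R_0=I$ via Remark~\ref{rem:eqrtLikelihood}; consequently $L_h=h^iL_i+o(\|h\|)$. Expanding $R_h^{2}=e^{L_h}=I+L_h+\tfrac12 L_h^{2}+O(L_h^{3})$ and taking trace against $\rho_{\theta_0}$, the centredness $\Tr\rho_{\theta_0}L_i=0$, the identity $h^ih^j\Tr\rho_{\theta_0}L_iL_j=h^ih^jJ_{ij}$, and the hypothesis on $\Tr\rho_{\theta_0}R_h^{2}$ together yield
\[
\Tr\rho_{\theta_0}L_h=-\tfrac12 J_{ij}h^ih^j+o(\|h\|^{2}).
\]
Decomposing the tensorised log-likelihood into fluctuation and mean,
\[
2\log R_{h/\sqrt n}^{\otimes n}=\sum_{k=1}^{n}I^{\otimes(k-1)}\otimes L_{h/\sqrt n}\otimes I^{\otimes(n-k)}=h^i\Delta_i^{(n)}+S_n-\tfrac12 J_{ij}h^ih^jI^{(n)}+o(1)\,I^{(n)},
\]
where $S_n$ is the sum over sites of the centred observable $r_{h/\sqrt n}:=L_{h/\sqrt n}-(h^iL_i)/\sqrt n-\Tr(\rho_{\theta_0}L_{h/\sqrt n})\,I$. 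The variance bound $\Tr\rho_{\theta_0}^{\otimes n}S_n^{2}\le n\,\Tr\rho_{\theta_0}r_{h/\sqrt n}^{2}=n\cdot o(1/n)=o(1)$ combined with the quantum CLT for $\Delta^{(n)}$ gives $S_n=o_D(h^i\Delta_i^{(n)},\rho_{\theta_0}^{(n)})$, whence
\[
2\log R_{h/\sqrt n}^{\otimes n}=h^i\Delta_i^{(n)}-\tfrac12 J_{ij}h^ih^jI^{(n)}+o_D,
\]
which is exactly condition (ii). Condition (i), $\Delta^{(n)}\convd{\rho_{\theta_0}^{(n)}}N(0,J)$, is the standard i.i.d.\ quantum CLT applied to the traceless Hermitian observables $L_i$.

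For the second assertion, the joint i.i.d.\ quantum CLT applied to the centred observables $(B_1,\dots,B_{d'},L_1,\dots,L_d)$ yields \eqref{eq:joint} with $\Sigma_{ij}=\Tr\rho_{\theta_0}B_jB_i$ and $\tau_{ij}=\Tr\rho_{\theta_0}L_jB_i$, and then \eqref{eq:iidLeCam3} follows from Corollary~\ref{cor:LeCam3qlan}. The main obstacle is the passage from the $L^2$-variance bound on $S_n$ to the stronger $o_D$ statement: this requires an i.i.d.\ CLT-type joint analysis of $(L_i,r_{h/\sqrt n})$-sums combined with a uniform control on the operator norm (or spectral tail) of $r_{h/\sqrt n}$, so that inserting $\exp(\i\eta S_n)$ between factors of $\exp(\i\xi^jh^i\Delta_i^{(n)})$ in the quasi-characteristic functional does not alter the limit. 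Once this is in place, the theorem reduces to the quantum CLT plus Corollary~\ref{cor:LeCam3qlan}.
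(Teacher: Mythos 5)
Your strategy coincides with the paper's at the outer level (tensorised square-root likelihood ratio, quantum CLT for condition (i), joint CLT plus Corollary \ref{cor:LeCam3qlan} for \eqref{eq:iidLeCam3}), but the central step of condition (ii) has a genuine gap. You reduce (ii) to the claim $S_n=o_D\left(h^i\Delta_i^{(n)},\rho_{\theta_0}^{\otimes n}\right)$ and justify it only by the variance bound $\Tr\rho_{\theta_0}^{\otimes n}S_n^2=o(1)$. But $L^2$-infinitesimality does not imply D-infinitesimality: this is precisely the failure displayed in the example preceding Definition \ref{defn:infinitesimal}, and it is the reason Definition \ref{def:QLAN}(ii) carries two separate error terms. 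You acknowledge the obstacle, but the repair you sketch (uniform operator-norm or spectral-tail control on $r_{h/\sqrt n}$ so that inserting $e^{\sqrt{-1}\eta S_n}$ into the quasi-characteristic function does not change the limit) is not adequate as stated: $\|r_{h/\sqrt n}\|=o(n^{-1/2})$ only yields $\|S_n\|=o(\sqrt{n})$, so no norm bound on $S_n$ is available, and the quantum CLT used in the paper concerns i.i.d.\ sums of a fixed one-site observable, not the triangular array $r_{h/\sqrt n}$ whose single-site term varies with $n$. As written, the decisive assertion of your proof is unproven. (A smaller omission: ``tensorising shows'' that $R_h^{\otimes n}$ is a version of $\ratio{\rho_{\theta_0+h}^{\otimes n}}{\rho_{\theta_0}^{\otimes n}}$ also requires showing that $\rho_{\theta_0+h}^{\otimes n}-\left(R_h\rho_{\theta_0}R_h\right)^{\otimes n}$ is singular with respect to $\rho_{\theta_0}^{\otimes n}$; the paper gets this from the identity $\left(\Tr\rho_{\theta_0}\rho_{\theta_0+h}\right)^n=\left(\Tr\rho_{\theta_0}R_h\rho_{\theta_0}R_h\right)^n$ together with Lemma \ref{lem:1}.)

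The gap is fillable, and it is worth seeing how your route relates to the paper's. Since $h^i\Delta_i^{(n)}$ and $S_n$ are sums of commuting one-site terms, every quasi-characteristic function in Definition \ref{defn:infinitesimal} factorizes as the $n$-th power of a single-site trace, $\left\{\Tr\rho_{\theta_0}\prod_t e^{\sqrt{-1}\left(\xi_t h^iL_i/\sqrt{n}+\eta_t r_{h/\sqrt{n}}\right)}\right\}^n$, and a one-site expansion to order $1/n$ (using $\Tr\rho_{\theta_0}L_i=\Tr\rho_{\theta_0}r_{h/\sqrt{n}}=0$, $\|r_{h/\sqrt{n}}\|=o(n^{-1/2})$, and Cauchy--Schwarz for the cross terms) shows that the $\eta$-dependence vanishes in the limit; that would legitimately give $S_n=o_D$ and complete your version of (ii), with the $o_{L^2}$ term of Definition \ref{def:QLAN}(ii) taken to be zero. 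The paper instead avoids logarithms altogether: it sets the $o_D$ term to zero and proves directly that $O_h^{(n)}:=\exp\left\{\frac{1}{2}\left(h^i\Delta_i^{(n)}-\frac{1}{2}J_{ij}h^ih^jI^{(n)}\right)\right\}-R_{h/\sqrt{n}}^{\otimes n}$ satisfies $\Tr\rho_{\theta_0}^{\otimes n}O_h^{(n)^2}\to 0$, by expanding three factorized traces with the help of \eqref{eq:oh2} and the evaluation $\Tr\rho_{\theta_0}\left(R_h-I-\frac{1}{2}A_ih^i\right)=-\frac{1}{8}J_{ij}h^ih^j+o(\|h\|^2)$. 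The two routes thus swap which error term of Definition \ref{def:QLAN}(ii) is exploited, and once your $o_D$ claim is actually proved, it rests on essentially the same single-site trace computation that the paper performs without taking logarithms.
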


The proof of Theorem \ref{thm:iid} is deferred to Appendix \ref{sec:proofs}. 

Let us demonstrate the power of Theorem \ref{thm:iid}. 
First we recall the following two-dimensional spin-1/2 pure state model treated in Example 3.3 of \cite{YFG}:
\begin{eqnarray*}
\tilde{\r}_\th
&:=& e^{\frac{1}{2}(\th^1\s_1+\th^2\s_2-\psi(\th))}
\begin{pmatrix} 1 & 0\\ 0 & 0\end{pmatrix}
e^{\frac{1}{2}(\th^1\s_1+\th^2\s_2-\psi(\th))}\\
&=&\frac{1}{2} \left\{ I+\frac{\tanh{\|\th\|}}{\|\th\|} (\th^1 \s_1 + \th^2 \s_2) + \frac{1}{\cosh{\| \th \|}} \s_3 \right\},
\end{eqnarray*}
where 
\[
 \s_1=\begin{pmatrix} 0 & 1\\ 1 & 0\end{pmatrix},\qquad
 \s_2=\begin{pmatrix} 0 & -\i\\ \i & 0\end{pmatrix},\qquad
 \s_3=\begin{pmatrix} 1 & 0\\ 0 & -1\end{pmatrix}
\]
are the Pauli matrices, $\th=(\th^1,\th^2)\in\R^2$ are parameters to be estimated, and $\psi(\th):=\log\cosh\|\th\|$. 
A version of the square-root likelihood ratio $\ratio{\tilde{\r}_\th}{ \tilde{\r}_0}$ is given by
$\tilde{R}_\th=e^{\frac{1}{2}(\th^1\s_1+\th^2\s_2-\psi(\th))}$,
and is expanded in $\theta$ as
\[
\tilde{R}_\theta= I+\frac{1}{2}L_i \theta^i+o(\| \theta \|),
\]
where $L_i:=\s_i$ is a version of the $i$th SLD of the model $\tilde{\r}_\th$ at $\th=0$.
Let $X^{(n)}=(X_1^{(n)},X_2^{(n)})$ be defined by
\begin{equation}\label{eq:pureLeCamX} 
 X_i^{(n)}:=\Delta_i^{(n)}
 :=\frac{1}{\sqrt{n}}\sum_{k=1}^n I^{\otimes(k-1)}\otimes L_i\otimes I^{\otimes (n-k)}.
\end{equation}
Then it is shown that $\{\tilde{\r}_\th^{\otimes n}\}$ is locally asymptotically normal at $\th=0$, and 
\begin{equation}\label{eq:pureLeCam3} 
X^{(n)}\convd{\;\;\tilde{\r}_{h/\sqrt{n}}^{\otimes n}} \;
N(h,J),
\end{equation}
where
\[
J=[\Tr \tilde{\r}_0 L_j L_i]_{ij}=
\begin{pmatrix}
1 & -\i\\
\i & 1
\end{pmatrix}.
\]

Incidentally, let us investigate what happens when the scaling factor $1/\sqrt{n}$ of the parameter $\theta=h/\sqrt{n}$ is replaced with $1/g(n)$, where $g(n)>0$ and $\lim_{n\to\infty}g(n)=\infty$.
By direct computation, we have
\begin{eqnarray*}
\liminf_{n\to\infty}
\Tr \tilde{\r}^{\otimes n}_{0} \tilde{\r}^{\otimes n}_{h/g(n)} 
&=&
\liminf_{n\to\infty}
\left\{ \Tr \tilde{\r}_{0} \tilde{\r}_{h/g(n)} \right\}^n\\
&=&
\liminf_{n\to\infty}
\left\{ \frac{1}{2} \left(1+\frac{1}{\cosh({\| h \|}/{g(n)})} \right)  \right\}^n\\
&=&
\liminf_{n\to\infty}
\left\{1- \frac{\|h\|^2}{4 g(n)^2} + o\left(\frac{1}{g(n)^2} \right) \right\}^n \\
&=&
\liminf_{n\to\infty}
\left\{1- \frac{\|h\|^2}{4 g(n)^2} + o\left(\frac{1}{g(n)^2} \right) \right\}^{g(n)^2 \frac{n}{g(n)^2}} \\
&=&
\liminf_{n\to\infty}
e^{-\frac{\|h\|^2}{4} \frac{n}{g(n)^2}}.
\end{eqnarray*}
It then follows from Theorem \ref{thm:contiguity_pure} that 
$\tilde{\r}^{\otimes n}_{h/g(n)} \vartriangleleft \tilde{\r}^{\otimes n}_{0}$
if and only if ${n}/{g(n)^2}$ is bounded. 

Now we consider a perturbed model
\[
 \r_\th:=e^{-f(\th)} \tilde{\r}_\th+(1-e^{-f(\th)})\begin{pmatrix} 0 & 0\\ 0 & 1\end{pmatrix}, \qquad (\th\in\R^2),
\]
where $f(\th)$ is a smooth function that is positive for all $\th\neq 0$ and $f(0)=0$. 
Geometrically, this model is tangential to the Bloch sphere at the north pole $\r_0\,(=\tilde{\r}_0)$, 
and has a singularity at $\th=0$ in that the rank of the model drops there. 
Such a model was beyond the scope of our previous paper \cite{YFG}.

Since $\r_\th\ge e^{-f(\th)}\tilde{\r}_\th$, 
we see from Lemma \ref{lem:2} that $\r_\th\gg \r_0$ for all $\th$. 
It is also easily seen that the quantum Lebesgue decomposition 
$
 \r_\th=\r_\th^{ac}+\r_\th^\perp
$
with respect to $\r_0$ is given by
\[
 \r_\th^{ac}:=e^{-f(\th)}\tilde{\r}_\th,\qquad 
  \r_\th^\perp:=(1-e^{-f(\th)}) \begin{pmatrix} 0 & 0\\ 0 & 1\end{pmatrix}.
\]
Similarly, the quantum Lebesgue decomposition 
$
\r_\th^{\otimes n}=(\r_\th^{\otimes n})^{ac}+(\r_\th^{\otimes n})^\perp
$
with respect to $\r_0^{\otimes n}$ is given by
\[
(\r_\th^{\otimes n})^{ac}=(\r_\th^{ac})^{\otimes n},
\qquad 
(\r_\th^{\otimes n})^\perp=\r_\th^{\otimes n}-(\r_\th^{\otimes n})^{ac}. 
\]

For a positive sequence $g(n)$ satisfying $\lim_{n\to\infty}g(n)=\infty$, 
we have 
\[
\Tr (\r_{h/{g(n)}}^{\otimes n})^{ac}=e^{-n f(h/{g(n)})}
\]
and
\begin{eqnarray*}
\liminf_{n\to\infty}
 \Tr \r_0^{\otimes n} (\r_{h/{g(n)}}^{\otimes n})^{ac}
&=&
\liminf_{n\to\infty}
e^{-n f(h/{g(n)})}\left\{ \frac{1}{2}\left(1+\frac{1}{\cosh ({\|h\|}/{g(n)})} \right) \right\}^n\\
&=&
\liminf_{n\to\infty} e^{-n f(h/{g(n)}) -\frac{\|h\|^2}{4} \frac{n}{g(n)^2} }.
\end{eqnarray*}
It then follows from Theorem \ref{thm:contiguity_pure} that $\r_{h/{g(n)}}^{\otimes n} \triangleleft \r_{0}^{\otimes n}$
if and only if $n f(h/{g(n)})$ converges to zero and ${n}/{g(n)^2}$ is bounded.

For the standard scaling $g(n)=\sqrt{n}$, the above observation shows that
$\r_{h/\sqrt{n}}^{\otimes n} \triangleleft \r_{0}^{\otimes n}$
if and only if $f(\theta)=o(\|\theta\|^2)$.
Then the operator $R_\th:=e^{-\frac{1}{2}f(\th)}\tilde{R}_\th$, a version of the square-root likelihood ratio $\ratio{\r_\th}{\r_0}$, is expanded in $\theta$ as
\[
 R_\theta=I+\frac{1}{2}L_i \theta^i+o(\|\theta\|),
\]
where $L_i:=\s_i$ is a version of the $i$th SLD of the model $\r_\th$ at $\th=0$.
On the other hand, the singular part $\r_\th^\perp$ exhibits 
$\Tr \r_\theta^\perp = o(\| \theta \|^2)$; 
this ensures the condition \eqref{eq:oh2}.
It then follows from Theorem \ref{thm:iid} that
$\{\r^{\otimes n}_{\th} \}_{\th}$ is locally asymptotically normal at $\th=0$, and  
the sequence $X^{(n)}$ of observables defined by \eqref{eq:pureLeCamX} exhibits
\begin{equation}\label{eq:pureLeCam3_2}
X^{(n)}\convd{\;\;\r_{h/\sqrt{n}}^{\otimes n}} \;
N(h,J).
\end{equation}

In summary, as far as the observables $X^{(n)}=(X^{(n)}_1,X^{(n)}_2)$ defined by \eqref{eq:pureLeCamX} are concerned,
the i.i.d.~extension 
$\left\{\left.\r^{\otimes n}_{h/\sqrt{n}} \right|\, h\in\R^2 \right\}$
of the perturbed model $\r_\th$ around the singular point $\th=0$ 
is asymptotically similar to the quantum Gaussian shift model $\{ N(h,J) \left|\, h\in\R^2 \right.\}$ 
as shown in \eqref{eq:pureLeCam3_2}, 
and is also asymptotically similar to the i.i.d.~extension 
$\left\{\left.\tilde{\r}^{\otimes n}_{h/\sqrt{n}} \right|\, h\in\R^2 \right\}$
of the unperturbed pure state model $\tilde{\r}_\th$ around $\th=0$ as shown in \eqref{eq:pureLeCam3}.

We conclude this subsection with a short remark that, 
for any quantum statistical model that fulfils assumptions of Theorem \ref{thm:iid}, 
the Holevo bound \cite{Holevo:1982} is asymptotically achievable at $\th_0$. 
In fact, let $\{B_i\}_{1\le i\le d'}$ be a basis of the minimal 
$\D$-invariant extension of the SLD tangent space 
at $\th_0$, where $\D$ is the commutation operator \cite{Holevo:1982}. 
Then the Holevo bound for the original model $\{\r_\th\}_\th$ at $\th=\th_0$ 
coincides with that for the quantum Gaussian shift model 
$N(({\rm Re}\tau) h,\Sigma)$ at $h=0$, and hence at any $h$.  
Thus the asymptotic property
\[ 
X^{(n)}
\convd{\rho_{\theta_0+h/\sqrt{n}}^{\otimes n}}  
N(({\rm Re}\tau) h,\Sigma)
\]
enables us to construct a sequence of observables that asymptotically achieves the Holevo bound.
For a concrete construction of estimators, see the proof of \cite[Theorem 3.1]{YFG}.

\section{Concluding remarks}\label{sec:conclusion}

In the present paper, we first formulated a novel quantum Lebesgue decomposition (Lemma \ref{lem:uniqueness2}), 
and then developed a theory of quantum contiguity (Definition \ref{def:qcontiguity}).
We further studied the notion of convergence in distribution in the quantum domain, 
and proved a noncommutative extension of the L\'evy-Cram\'er continuity theorem 
under the sandwiched convergence in distribution (Lemma \ref{lem:qLevyCramerSand}). 
Combining these key results, we arrived at our main result, the quantum Le Cam third lemma (Theorems \ref{thm:qLeCam3} and \ref{thm:LeCam3clt}).
The power and usefulness of our theory were demonstrated by several examples, including a quantum contiguity version of the Kakutani dichotomy (Theorem \ref{thm:kakutani}), and enlargement of the scope of q-LAN (Corollary \ref{cor:LeCam3qlan}). 

We believe that the paper presented some notable progresses in asymptotic quantum statistics. 
Nevertheless, there are many open problems left to study in the future. 
Among others, it is not clear whether every sequence of positive operator-valued measures on a weak q-LAN model can be realized on the limiting quantum Gaussian shift model. 
In classical statistics, this question has been solved affirmatively by the representation theorem \cite{Vaart}, 
which asserts that, given a weakly convergent sequence $T^{(n)}$ of statistics on a LAN model
$\left\{p^{(n)}_{\th_0+h/\sqrt{n}} \left|\, h\in\R^d \right.\right\}$,
there exist a limiting statistics $T$ on the Gaussian shift model $\left\{ N(h,J^{-1}) \left|\, h\in\R^d \right.\right\}$
such that $T^{(n)} \convd{h} T$. 
Representation theorem is useful in proving, for example, the non-existence of an estimator that can asymptotically do better than what can be achieved in the limiting Gaussian shift model. 
Moreover, the so-called convolution theorem and local asymptotic minimax theorem, which are the standard tools in discussing asymptotic lower bounds for estimation in LAN models, immediately follows \cite{Vaart}. 
Extending the representation theorem, convolution theorem, and local asymptotic minimax theorem
to the quantum domain is one of the most important open problems.

It also remains to be investigated whether our asymptotically optimal statistical procedures for the local model indexed by the parameter $\theta_0+h/\sqrt{n}$ can be translated into useful statistical procedures for the real world case in which $\theta_0$ is unknown. 
Some authors \cite{{GillMassar}, {YangCH}} advocated two-step estimation procedures, in which one first measures a small portion of the quantum system, in number $n_1$ say, using some standard measurement scheme and constructs an initial estimate, say $\tilde\theta_1$, of the parameter.
One next applies the theory of q-LAN to compute the asymptotically optimal measurement scheme which corresponds to the situation $\theta_0=\tilde \theta_1$, 
and then proceeds to implement this measurement on the remaining $n_2\, (:=n-n_1)$ quantum systems collectively, estimating $h$ in the model $\theta=\tilde \theta_1+h/\sqrt{n_2}$. 
However such procedures are inherently limited to within the scope of weak consistency. 
Studying the strong consistency and asymptotic efficiency \cite{Fujiwara:2006} in the framework of collective quantum estimation scheme is also an important open problem.

\section*{Acknowledgments}

The present study was supported by JSPS KAKENHI Grant Numbers JP22340019, and JP17H02861.

\appendix

\section{Quantum Gaussian state}\label{app:q-Gaussian}
Given a $d\times d$ real skew-symmetric matrix $S=[S_{ij}]$, let $\CCR{S}$ denote the algebra generated by the observables $X=(X_1,\dots,X_d)$ that satisfy the following canonical commutation relations (CCR): 
\[ \frac{\i}{2}[X_i,X_j] = S_{ij} \qquad(1\leq i,j\leq d), \]
or more precisely
\[
 e^{\i X_i}e^{\i X_j}=e^{-\i S_{ij}} e^{\i (X_i+X_j)} \qquad(1\leq i,j\leq d). 
\]
A state $\phi$ on CCR($S$) is called a {\em quantum Gaussian state}, denoted $\phi\sim N(h,J)$, if the characteristic function 
${\cal F}_{\xi}\{\phi\}:=\phi(e^{\i\xi^{i}X_{i}})$ takes the form
\[ {\cal F}_{\xi}\{\phi\}=e^{\i\xi^{i}h_{i}-\frac{1}{2}\xi^{i}\xi^{j}V_{ij}} \]
where $\x=(\x^i)_{i=1}^d\in\R^d$, $h=(h_i)_{i=1}^d\in\R^d$, and $V=[V_{ij}]$ is a real symmetric matrix such that the Hermitian matrix $J:=V+\i S$ is positive semidefinite. 
When the canonical observables $X$ need to be specified, we also use the notation $(X,\phi)\sim N(h,J)$. 

When we discuss relationships between a quantum Gaussian state $\phi$ on a CCR and a state on another algebra, we need to use the {\em quasi-characteristic function} \cite{qclt}
\begin{equation}\label{eqn:q-GaussianCharFnc}
\phi\left(\prod_{t=1}^{r} e^{\i\xi_{t}^{i}X_{i}}\right)
=
\exp\left(\sum_{t=1}^{r}\left(\i\xi_{t}^{i}h_{i}-\frac{1}{2}\xi_{t}^{i}\xi_{t}^{j}J_{ji}\right)-\sum_{t=1}^{r}\sum_{u=t+1}^{r}\xi_{t}^{i}\xi_{u}^{j}J_{ji}\right)
\end{equation}
of a quantum Gaussian state, where $(X,\phi)\sim N(h,J)$ and $\{\x_t\}_{t=1}^r \subset \R^d$.  
Note that \eqref{eqn:q-GaussianCharFnc} is analytically continued to $\{\x_t\}_{t=1}^r \subset \C^d$.

\section{Quantum L\'evy-Cram\'er continuity theorem}\label{app:q-LevyCramer}

In \cite{qLevyCramer}, they derived a noncommutative version of the L\'evy-Cram\'er continuity theorem. 
Let us first cite their main result in a form consistent with the present paper.

For each $n\in \N$, let $\rho^{(n)}$ be a state (density operator) and ${Z}^{(n)}=\left(Z_{1}^{(n)},\dots,Z_{s}^{(n)}\right)$ be observables on a finite dimensional Hilbert space $\H^{(n)}$.
Further, let $\phi$ be a normal state (linear functional) 
and ${Z}^{(\infty)}=\left(Z_{1}^{(\infty)},\dots,Z_{s}^{(\infty)}\right)$ be densely defined observables on a possibly infinite dimensional Hilbert space $\H^{(\infty)}$. 
Assume that for all $m\in\N$, $\alpha=(\alpha_1,\dots,\alpha_m)\in\R^m$, and $j_1,\dots, j_m\in\{1,\dots,s\}$, one has
\begin{equation}\label{eqn:JaksicAssumtion}
\lim_{n\to\infty}\Tr\rho^{(n)}\prod_{t=1}^{m}e^{\sqrt{-1}\alpha_t Z_{j_t}^{(n)}}
=\phi\left(\prod_{t=1}^{m}e^{\sqrt{-1}\alpha_t Z_{j_t}^{(\infty)}} \right).
\end{equation}
Then it holds that
\begin{equation}\label{eq:bounded_conv}
\lim_{n\to\infty}\Tr\rho^{(n)}\prod_{i=1}^{s}f_{i}(Z_{i}^{(n)})
=\phi\left(\prod_{i=1}^{s}f_{i}(Z_{i}^{(\infty)})\right)
\end{equation}
for any bounded continuous functions $f_{1},\dots,f_{s}$ on $\R$. 
Furthermore, \eqref{eq:bounded_conv} remains true for bounded Borel functions $f_{1},\dots,f_{s}$ on $\R$ that enjoy certain measure conditions for the sets of discontinuity points (which will be stated below). 

Now observe that assumption \eqref{eqn:JaksicAssumtion} requires every finite repetition and permutation of the given observables $\{Z_i^{(\, \cdot\, )}\}_{1\le i\le s}$. 
Nevertheless, what Jak\v{s}i\'c {\em et al.} elucidated was something stronger in that their proof did not make full use of assumption \eqref{eqn:JaksicAssumtion} and is effective under certain weaker assumptions. 
In particular, the following variant, in which assumption \eqref{eqn:JaksicAssumtion} is replaced with \eqref{eq:quasi-conv1}--\eqref{eq:quasi-conv3}, plays a key role in the present paper. 

\begin{thm}\label{thm:qLevyCramer}
For $n\in\N\cup\{\infty\}$, $i\in\{1,\dots,s\}$, and $\alpha=(\alpha_1,\dots,\alpha_s)\in\R^{s}$, let 
$U_{i}^{-(n)}(\alpha)$ and $U_{i}^{+(n)}(\alpha)$ be unitary operators defined by
\[
U_{i}^{-(n)}(\alpha):=\prod_{t=1}^{i}e^{\sqrt{-1}\alpha_t Z_{t}^{(n)}}
\quad\mbox{and}\quad
U_{i}^{+(n)}(\alpha):=\prod_{t=i}^{s}e^{\sqrt{-1}\alpha_{t}Z_{t}^{(n)}},
\]
and let $U_{0}^{-(n)}(\alpha)$ and $U_{s+1}^{+(n)}(\alpha)$ be identity operators.
Assume that there is a $J\in\{0,1,\dots,s\}$ such that, for all $\alpha, \beta\in\R^s$, the following three conditions are satisfied: 
\begin{equation}\label{eq:quasi-conv1}
\lim_{n\to\infty}\Tr\rho^{(n)}U_{s}^{-(n)}(\alpha)
=\phi\left(U_{s}^{-(\infty)}(\alpha)\right),
\end{equation}
\begin{equation}\label{eq:quasi-conv2}
\lim_{n\to\infty}\Tr\rho^{(n)}U_{J}^{-(n)}(\alpha)\,U_{J}^{-(n)}(\beta)^{*}
=\phi\left(U_{J}^{-(\infty)}(\alpha)\,U_{J}^{-(\infty)}(\beta)^{*} \right),
\end{equation}
\begin{equation}\label{eq:quasi-conv3}
\lim_{n\to\infty}\Tr\rho^{(n)}U_{J+1}^{+(n)}(\alpha)^{*}\,U_{J+1}^{+(n)}(\beta)
=\phi\left(U_{J+1}^{+(\infty)}(\alpha)^{*}\,U_{J+1}^{+(\infty)}(\beta)\right).
\end{equation}
Then \eqref{eq:bounded_conv} holds for any bounded continuous functions $f_{1},\dots,f_{s}$ on $\R$. 
	
Furthermore, let $f_{1},\dots,f_{s}$ be bounded Borel functions on $\R$, and let $\D(f_{i})$ be the set of discontinuity points of $f_{i}$. 
Assume, in addition to \eqref{eq:quasi-conv1}--\eqref{eq:quasi-conv3}, that one has
\begin{equation} \label{eq:levycramer_measure}
\mu_{i}^{\alpha}(\D(f_{i}))=0
\end{equation}
for all $i\in\{1,\dots, s\}$ and $\alpha\in\R^{s}$, where $\mu_{i}^{\alpha}$ is the classical probability measure having the characteristic function 
\begin{equation} \label{eq:levycramer_characteristic}
\varphi_{\mu_{i}^{\alpha}}(\gamma)
:=\begin{cases}
\phi\left(U_{i-1}^{-(\infty)}(\a) \left(e^{\sqrt{-1}\gamma Z_{i}^{(\infty)}}\right) U_{i-1}^{-(\infty)}(\a)^*\right), 
& \text{if }i\le J\\ \\
\phi\left(U_{i+1}^{+(\infty)}(\a)^* \left(e^{\sqrt{-1}\gamma Z_{i}^{(\infty)}} \right) U_{i+1}^{+(\infty)}(\a)\right),
& \text{if }i\ge J+1.
\end{cases}
\end{equation}
Then \eqref{eq:bounded_conv} remains true.
\end{thm}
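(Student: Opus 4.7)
The plan is to trace through the argument of Jak\v{s}i\'c \emph{et al.}~\cite{qLevyCramer}, verifying that their proof scheme actually uses the hypotheses only in the form of \eqref{eq:quasi-conv1}--\eqref{eq:quasi-conv3}, suitably packaged around the cutoff index $J$. The preliminary observation, used throughout, is that for each $n\in\N\cup\{\infty\}$, each $i$, and each $\alpha\in\R^s$, the sandwiched function $\gamma\mapsto\Tr\rho^{(n)} U_{i-1}^{-(n)}(\alpha)\, e^{\sqrt{-1}\gamma Z_i^{(n)}}\, U_{i-1}^{-(n)}(\alpha)^{*}$ (for $i\le J$, with an analogous expression for $i\ge J+1$) is positive-definite and continuous in $\gamma$, hence is the characteristic function of a classical probability measure $\mu_i^{\alpha,(n)}$ on $\R$, reducing to $\mu_i^\alpha$ at $n=\infty$. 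The first step is to prove weak convergence $\mu_i^{\alpha,(n)}\Rightarrow\mu_i^\alpha$ for each fixed $i$ and $\alpha$; by the classical L\'evy continuity theorem it suffices to establish pointwise convergence of the characteristic functions, and this follows from the \emph{palindromic identity}
\[
U_{i-1}^{-(n)}(\alpha)\, e^{\sqrt{-1}\gamma Z_i^{(n)}}\, U_{i-1}^{-(n)}(\alpha)^{*}
= U_J^{-(n)}(\alpha')\, U_J^{-(n)}(\beta)^{*},
\]
with $\alpha' := (\alpha_1,\dots,\alpha_{i-1},\gamma,\alpha_{i+1},\dots,\alpha_J)$ and $\beta := (\alpha_1,\dots,\alpha_{i-1},0,\alpha_{i+1},\dots,\alpha_J)$. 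This identity is a purely algebraic consequence of the cancellation $\prod_{t=i+1}^{J} e^{\sqrt{-1}\alpha_t Z_t^{(n)}} \cdot \prod_{t=J}^{i+1} e^{-\sqrt{-1}\alpha_t Z_t^{(n)}} = I$ (applied recursively from the inside out, requiring no commutation relations), and it brings the expression into precisely the form covered by \eqref{eq:quasi-conv2}; a symmetric argument based on \eqref{eq:quasi-conv3} treats $i\ge J+1$.

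Next, for bounded continuous $f_1,\dots,f_s$, I would approximate each $f_i$ on a large compact set $K_i\subset\R$---chosen big enough that $\mu_i^{\alpha,(n)}(\R\setminus K_i)$ is uniformly small in $n$, using the tightness just established---by a function $\tilde f_i$ whose Fourier transform lies in $L^1(\R)$. Fourier inversion expresses $\Tr\rho^{(n)}\prod_i \tilde f_i(Z_i^{(n)})$ as an absolutely convergent integral over $\R^s$ of $\Tr\rho^{(n)}\, U_s^{-(n)}(\alpha)$ weighted by $\prod_i \widehat{\tilde f_i}(\alpha_i)$; by \eqref{eq:quasi-conv1} and dominated convergence this integral converges to its $\phi$-counterpart. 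The approximation errors $f_i-\tilde f_i$ are controlled, via Cauchy--Schwarz in the state $\rho^{(n)}$, by quantities of the form $\int |f_i - \tilde f_i|^2 \,d\mu_i^{\alpha,(n)}$ for a sandwich parameter chosen consistently with the split at $J$; these are uniformly small thanks to the construction of $\tilde f_i$ and the weak convergence from the previous step.

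Finally, for bounded Borel $f_i$ satisfying \eqref{eq:levycramer_measure}, one sandwiches $f_i$ between continuous functions $f_i^{-}\le f_i\le f_i^{+}$ with $\int(f_i^{+}-f_i^{-})\,d\mu_i^\alpha$ arbitrarily small---possible precisely because $\mu_i^\alpha(\mathcal{D}(f_i))=0$, via Lusin's theorem---and squeezes using the continuous case. The main obstacle is the bookkeeping in the second step: each Cauchy--Schwarz error bound must generate a sandwich structure that matches exactly one of \eqref{eq:quasi-conv2} or \eqref{eq:quasi-conv3}, and it is here that the palindromic identity and the division at $J$ are indispensable. Once this bookkeeping is in place, the remaining estimates reduce to standard classical probability arguments.
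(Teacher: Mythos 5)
Your proposal is correct and takes essentially the same route as the paper: the paper's own ``proof'' is just the observation that the argument of Jak\v{s}i\'c, Pautrat and Pillet carries over verbatim because it only uses the hypotheses in the sandwiched forms \eqref{eq:quasi-conv1}--\eqref{eq:quasi-conv3}, which is exactly what your reconstruction (the palindromic reduction to those conditions, the Fourier-inversion step for nice functions, the Cauchy--Schwarz error control via the sandwiched spectral measures, and the final squeeze for Borel functions with $\mu_i^{\alpha}$-null discontinuity sets) spells out.
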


The proof of Theorem \ref{thm:qLevyCramer} is exactly the same as \cite{qLevyCramer}.
Note that when $J\in\{1,\dots, s-1\}$, the characteristic functions \eqref{eq:levycramer_characteristic} for $i=1$ and $s$ are reduced to
\[
\varphi_{\mu_{1}^{\alpha}}(\gamma)=\phi\left(e^{\sqrt{-1}\gamma Z_{1}^{(\infty)}}\right)
\quad\mbox{and}\quad
\varphi_{\mu_{s}^{\alpha}}(\gamma)=\phi\left(e^{\sqrt{-1}\gamma Z_{s}^{(\infty)}}\right).
\] 
In particular, they are independent of $\a$.
This fact is exploited in our sandwiched-type continuity theorem (Lemma \ref{lem:qLevyCramerSand}).

\if 0

\setcounter{section}{19}
\setcounter{equation}{0}

\begin{supplement}[id=suppA]
  \sname{Supplement A}
  \stitle{Supplementary material to ``Noncommutative Lebesgue decomposition and contiguity with applications in quantum statistics''}
  \sdescription{
This supplementary material is devoted to proofs of Remark \ref{rem:eqrtLikelihood}, Theorem \ref{thm:contiguity_ac}, Theorem \ref{thm:contiguity_pure}, Lemma \ref{thm:inifini_both_side}, Theorem \ref{thm:qLeCam3}, Theorem \ref{thm:intersect}, Theorem \ref{thm:kakutani}, and Theorem \ref{thm:iid}. 
 }
\end{supplement}

\fi


\section{Proofs}\label{sec:proofs}

\begin{proof}[Proof of Remark \ref{rem:eqrtLikelihood}]
Recall that $\s$ is decomposed as $\s=E^*\tilde\s E$, where
\[
E=
\begin{pmatrix}
I & 0 & 0\\
0 & I & \s_0^{-1} \a \\
0 & 0 & I
\end{pmatrix},
\qquad
\tilde\s=
\begin{pmatrix}
0 & 0 & 0\\
0 & \s_0 & 0 \\
0 & 0 & \beta-\alpha^* \sigma_0^{-1} \alpha
\end{pmatrix}.
\]
Then there is a unitary operator $U$ that satisfies
\[
 \sqrt{\tilde\s}\,E=U\sqrt{\s}, 
\]
and the operator $R$, modulo the singular part $R_2$, is given by
\begin{eqnarray*}
E^* \begin{pmatrix} 0 & 0 & 0 \\ 0 & \s_0 \# \r_0^{-1} & 0 \\ 0 & 0 & 0 \end{pmatrix} E
&=&
E^* \begin{pmatrix} 0 & 0 & 0 \\ 
0 & \sqrt{\s_0}\left(\sqrt{\sqrt{\s_0}\r_0\sqrt{\s_0}\,}\right)^{-1}\sqrt{\s_0} & 0 \\ 
0 & 0 & 0 \end{pmatrix} E \\
&=&
E^* \sqrt{\tilde\s}\left(\sqrt{\sqrt{\tilde\s}\r \sqrt{\tilde\s}\,}\right)^{+}\sqrt{\tilde\s}\, E\\
&=&
E^* \sqrt{\tilde\s}\left(\sqrt{\sqrt{\tilde\s} E\r E^*\sqrt{\tilde\s}\,}\right)^{+}\sqrt{\tilde\s}\, E\\
&=&
\sqrt{\s}\, U^*\left(\sqrt{ U\sqrt{\s} \r \sqrt{\s} U^*\,}\right)^{+} U \sqrt{\s} \\
&=&
\sqrt{\s}\, U^*\left(U\sqrt{\sqrt{\s} \r \sqrt{\s}\,}U^*\right)^{+} U \sqrt{\s} \\
&=&
\sqrt{\s} \left(\sqrt{ \sqrt{\s} \r \sqrt{\s}\,}\right)^{+} \sqrt{\s}.
\end{eqnarray*}
This proves the claim \eqref{eqn:remark}.
\end{proof}


\begin{proof}[Proof of Theorem \ref{thm:contiguity_ac}]
We first prove the `if' part. 
Due to Remark \ref{rem:eqrtLikelihood}, for each $n\in\N\cup \{\infty\}$, the operator 
\[ R^{(n)}:=\sqrt{\sigma^{(n)}}\,Q^{(n)^{+}}\sqrt{\sigma^{(n)}} \]
is a version of the square-root likelihood ratio $\ratio{\sigma^{(n)}}{\rho^{(n)}}$, where
\[ Q^{(n)}:= \sqrt{\sqrt{\sigma^{(n)}}\rho^{(n)}\sqrt{\sigma^{(n)}}\,}. \]
Let the spectral (Schatten) decomposition of $Q^{(n)}$ be
\[
 Q^{(n)}=\sum_{i=1}^{\dim \H} q_i^{(n)} E_i^{(n)},\qquad
 (\rank E_i^{(n)}=1)
\]
where the eigenvalues are arranged in the increasing order. 
Take an arbitrary positive number $\lambda$ that is smaller than the minimum positive eigenvalue of $Q^{(\infty)}$.
Then there is an $N\in\N$ and an index $d$, ($1\le d\le \dim\H$), such that for all $n\ge N$,
\[
 q_1^{(n)} \le q_2^{(n)} \le \cdots \le q_{d-1}^{(n)} 
 < \lambda
 < q_{d}^{(n)} \le \cdots \le q_{\dim\H}^{(n)}
\]
and, if $d\ge 2$, then $q_{d-1}^{(n)}\to 0$ as $n\to\infty$. 
Consequently, for $n\ge N$,
\[
 \mathbbm{1}_\lambda(Q^{(n)})=\sum_{i=1}^{d-1} E_i^{(n)} 
 \;\;\mathop{\longrightarrow}_{n\to\infty}\;\;\;
 \sum_{i=1}^{d-1} E_i^{(\infty)}=\mathbbm{1}_\lambda(Q^{(\infty)})
 =\mathbbm{1}_0(Q^{(\infty)}). 
\]

Let us introduce
\[
 O^{(n)}:=\sqrt{\sigma^{(n)}}\, \mathbbm{1}_\lambda(Q^{(n)}) Q^{(n)^{+}} \sqrt{\sigma^{(n)}}.
\]
Then it is shown that $O^{(n)}=o_{L^2}(\rho^{(n)})$. 
In fact, 
\begin{eqnarray*}
\Tr \rho^{(n)} {O}^{(n)^{2}}
&=&\Tr \sigma^{(n)} \mathbbm{1}_\lambda(Q^{(n)}) {Q^{(n)^{+}}} {Q^{(n)^2}} {Q^{(n)^{+}}} \\
&\le& \Tr \sigma^{(n)} \mathbbm{1}_\lambda(Q^{(n)}) \\
&\to& \Tr \sigma^{(\infty)} \mathbbm{1}_0 (Q^{(\infty)}) \\
&=& \Tr \sigma^{(\infty)\perp} \\
&=&0.
\end{eqnarray*}
Here, the inequality follows from
\[
 {Q^{(n)^{+}}} {Q^{(n)^2}} {Q^{(n)^{+}}}
 =\sum_{i: q_i^{(n)} > 0} E_i^{(n)}
 =I-\mathbbm{1}_0(Q^{(n)}),
\]
the second last equality from
\begin{eqnarray*}
 \sigma^{(\infty) ^{ac}}
&=&R^{(\infty)} \rho^{(\infty)} R^{(\infty)} \\
&=& \sqrt{\sigma^{(\infty)}}\, {Q^{(\infty)^{+}}} {Q^{(\infty)^2}} {Q^{(\infty)^{+}}} \sqrt{\sigma^{(\infty)}} \\
&=&\sqrt{\sigma^{(\infty)}} (I-\mathbbm{1}_0(Q^{(\infty)})) \sqrt{\sigma^{(\infty)}},
\end{eqnarray*}
and the last equality from $\sigma^{(\infty)}\ll\rho^{(\infty)}$. 

We next introduce
\[
\overline{R}^{(n)}:={R}^{(n)}-O^{(n)}
=\sqrt{\sigma^{(n)}}\, \left(I-\mathbbm{1}_\lambda(Q^{(n)}) \right) Q^{(n)^{+}}\sqrt{\sigma^{(n)}}.
\]
Then $\overline{R}^{(n)}$ is positive. 
Moreover, it is shown that $\Tr\rho^{(n)}\overline{R}^{(n)^{2}}\to 1$ as $n\to\infty$. 
In fact, 
\begin{equation}\label{eqn:Thm4.4UB}
 \left(I-\mathbbm{1}_\lambda(Q^{(n)}) \right)Q^{(n)^{+}}
 =\left(\sum_{i: q_i^{(n)}>\lambda} E_i^{(n)}\right) 
   \left(\sum_{i: q_i^{(n)}>0} \frac{1}{q_i^{(n)}} E_i^{(n)}\right) 
 = \sum_{i: q_i^{(n)}>\lambda} \frac{1}{q_i^{(n)}} E_i^{(n)},
\end{equation}
which converges to 
\[
 \left(I-\mathbbm{1}_\lambda(Q^{(\infty)}) \right)Q^{(\infty)^{+}}
 =\sum_{i: q_i^{(\infty)}>\lambda} \frac{1}{q_i^{(\infty)}} E_i^{(\infty)}.
\]
In addition, since
\[
 \mathbbm{1}_\lambda(Q^{(\infty)}) Q^{(\infty)^{+}}
 =\left(\sum_{i: q_i^{(\infty)}=0} E_i^{(\infty)}\right) 
   \left(\sum_{i: q_i^{(\infty)}>0} \frac{1}{q_i^{(\infty)}} E_i^{(\infty)}\right) 
 =0,
\]
we have
\begin{equation}\label{eqn:Thm4.4IF}
\left(I-\mathbbm{1}_\lambda(Q^{(n)}) \right)Q^{(n)^{+}} 
\longrightarrow
Q^{(\infty)^{+}}.
\end{equation}
Thus
\[
 \overline{R}^{(n)} \longrightarrow 
 \sqrt{\sigma^{(\infty)}}\,Q^{(\infty)^{+}} \sqrt{\sigma^{(\infty)}}=R^{(\infty)},
\]
so that
\[
\lim_{n\to\infty}\Tr\rho^{(n)}\overline{R}^{(n)^{2}}
=\Tr\rho^{(\infty)}{R}^{(\infty)^{2}}
=\Tr\sigma^{(\infty)}
=1.
\]
Here, the second equality follows from $\sigma^{(\infty)}\ll\rho^{(\infty)}$. 
This identity is combined with $O^{(n)}=o_{L^2}(\rho^{(n)})$ to conclude that $\lim_{n\to\infty}\Tr\rho^{(n)} {R}^{(n)^{2}}=1$. 
Furthermore, due to \eqref{eqn:Thm4.4UB}, the family $\overline{R}^{(n)}$ is uniformly bounded, in that
\[
\overline{R}^{(n)}\leq\frac{1}{\lambda}\sigma^{(n)}\leq\frac{1}{\lambda}.
\]
Thus, the sequence $\overline{R}^{(n)^{2}}$ is uniformly integrable under $\rho^{(n)}$.
This proves $\sigma^{(n)}\vartriangleleft\rho^{(n)}$. 
	
We next prove the `only if' part. 
Let $R^{(n)}$ be a version of the square-root likelihood ratio $ \Ratio\left(\sigma^{(n)}\mid\rho^{(n)}\right)$.
Due to assumption, there is an $L^2$-infinitesimal sequence $O^{(n)}$ of observables such that $\sigma^{(n)}\vartriangleleft_{O^{(n)}}\rho^{(n)}$. 
Let 
\[
 \overline{R}^{(n)}=\sum_{i=1}^{\dim \H} r_i^{(n)} E_i^{(n)},\qquad
 (\rank E_i^{(n)}=1)
\]
be the spectral (Schatten) decomposition of $\overline{R}^{(n)}=R^{(n)}+O^{(n)}$, where the eigenvalues are arranged in the increasing order, so that
\[
 r_1^{(n)} \le r_2^{(n)} \le \cdots \le r_{\dim\H}^{(n)}.
\]
Let us choose the index $d$, ($1\le d\le \dim\H$), that satisfies
\[
 \sup \left\{ \left.r_d^{(n)} \right| n\in\N \right\} <\infty 
 \qquad \mbox{and} \qquad
 \sup \left\{ \left.r_{d+1}^{(n)} \right| n\in\N \right\}=\infty,
\]
and let us define
\[
 A^{(n)}:=\sum_{i=1}^{d} r_i^{(n)} E_i^{(n)} \qquad \mbox{and} \qquad 
 B^{(n)}:=\sum_{i=d+1}^{\dim\H} r_i^{(n)} E_i^{(n)}.
\]
Then $A^{(n)}$ is the uniformly bounded part of $\overline{R}^{(n)}$, and $\overline{R}^{(n)}=A^{(n)}+B^{(n)}$. 

Take a convergent subsequence $A^{(n_{k})}$ of $A^{(n)}$, so that
\[
A_{(\infty)}:=\lim_{k\to\infty}A^{(n_{k})}.
\]
Then for any $M$ that is greater than $M_0:=\sup \left\{ \left.r_d^{(n)} \right| n\in\N \right\}$, 
\[
\lim_{k\to\infty}\overline{R}^{(n_{k})} \mathbbm{1}_{M}(\overline{R}^{(n_{k})})=A_{(\infty)}. 
\]
It then follows from the assumption $\sigma^{(n)}\vartriangleleft_{O^{(n)}}\rho^{(n)}$ that
\begin{equation}\label{eq:trA}
 \Tr\rho^{(\infty)}A_{(\infty)}^{2}
 =\lim_{M\to\infty} \lim_{k\to\infty}\Tr\rho^{(n_{k})}\overline{R}^{(n_{k})^{2}}\mathbbm{1}_{M}(\overline{R}^{(n_{k})})=1.
\end{equation}
Furthermore, since
\[
 \Tr\rho^{(n)}\overline{R}^{(n)^{2}}
 =\Tr\rho^{(n)}(A^{(n)}+B^{(n)})^{2}
 =\Tr\rho^{(n)}A^{(n)^{2}}+\Tr\rho^{(n)}B^{(n)^{2}},
\]
we see that $B^{(n_k)}=o_{L^{2}}(\rho^{(n_{k})})$, 
and so is $C^{(n_{k})}:=R^{(n_k)}-A^{(n_k)}=B^{(n_k)}-O^{(n_k)}$. 
As a consequence, for any unit vector $x\in\H$, 
\begin{eqnarray*}
&&\bracket{x}{R^{(n_k)} \rho^{(n_k)} R^{(n_k)} x} \\
&&\qquad=
\bracket{x}{A^{(n_k)} \rho^{(n_k)} A^{(n_k)}x} 
+2\,{\rm Re}\,\bracket{x}{A^{(n_k)} \rho^{(n_k)} C^{(n_k)}x} 
+\bracket{x}{C^{(n_k)}\rho^{(n_k)} C^{(n_k)}x} \\
&&\qquad \longrightarrow 
\bracket{x}{A_{(\infty)} \rho^{(\infty)} A_{(\infty)}x}
\end{eqnarray*}
as $k\to\infty$. 
In fact
\[
\left| \bracket{x}{C^{(n_k)}\rho^{(n_k)} C^{(n_k)}x}\right| 
\le \Tr C^{(n_k)}\rho^{(n_k)} C^{(n_k)} \longrightarrow 0
\]
and, due to the Schwartz inequality, 
\[
\left|\bracket{x}{A^{(n_k)} \rho^{(n_k)} C^{(n_k)}x} \right|^2
\le 
\bracket{x}{A^{(n_k)} \rho^{(n_k)} A^{(n_k)}x} \bracket{x}{C^{(n_k)} \rho^{(n_k)} C^{(n_k)}x}
\longrightarrow 0.
\]
It then follows from the inequality
\[
\s^{(n_k)} \geq R^{(n_k)}\r^{(n_k)}R^{(n_k)}
\]
that
\[
0\le\bracket{x}{\left(\s^{(n_k)}-R^{(n_k)}\r^{(n_k)}R^{(n_k)}\right)x} 
\mathop{\longrightarrow}_{k\to\infty} \bracket{x}{\left(\s^{(\infty)}-A_{(\infty)} \rho^{(\infty)} A_{(\infty)}\right)x}.
\]
Since $x\in\H$ is arbitrary, we have
\[
 \sigma^{(\infty)} \ge A_{(\infty)}\rho^{(\infty)}A_{(\infty)}.
\]
Combining this inequality with \eqref{eq:trA}, we conclude that
\[
\sigma^{(\infty)}=A_{(\infty)}\rho^{(\infty)}A_{(\infty)}.
\]
This implies that $\sigma^{(\infty)}\ll\rho^{(\infty)}$. 
\end{proof}


\begin{proof}[Proof of Theorem \ref{thm:contiguity_pure}]
We first prove the `if' part. Let
\[
\overline{R}^{(n)}=R^{(n)}
=\sqrt{\sigma^{(n)}}\sqrt{\sqrt{\sigma^{(n)}}\rho^{(n)}\sqrt{\sigma^{(n)}}}^{+}\sqrt{\sigma^{(n)}}.
\]
Due to assumption, there is an $\varepsilon >0$ and $N\in\N$ such that $n\ge N$ implies $\Tr \rho^{(n)}\sigma^{(n)}>\varepsilon$. 
Since $\rho^{(n)}$ is pure, the operator $\sqrt{\sigma^{(n)}}\rho^{(n)}\sqrt{\sigma^{(n)}}$ is rank-one, and its positive eigenvalue is greater than $\varepsilon$. Thus
\[
\overline{R}^{(n)} \leq \frac{1}{\sqrt{\varepsilon}}\sigma^{(n)}\leq\frac{1}{\sqrt{\varepsilon}}
\]
for all $n\ge N$. This implies that $\overline{R}^{(n)}$ is uniformly bounded, so that $\overline{R}^{(n)^{2}}$ is uniformly integrable. 

We next prove the `only if' part. 
Due to assumption, there is an $L^2$-infinitesimal sequence $O^{(n)}$ of observables such that $\sigma^{(n)}\vartriangleleft_{O^{(n)}}\rho^{(n)}$. Let 
\[
 \overline{R}^{(n)}=\sum_i r_i^{(n)} E_i^{(n)}
\]
be the spectral decomposition of $\overline{R}^{(n)}=R^{(n)}+O^{(n)}$, and let $\rho^{(n)}=\ket{\psi^{(n)}}\bra{\psi^{(n)}}$ for some unit vector $\psi^{(n)}\in\H^{(n)}$. 
Since $\lim_{n\to\infty} \Tr \rho^{(n)}R^{(n)^{2}}=1$ is equivalent to $\lim_{n\to\infty} \Tr \rho^{(n)}\overline{R}^{(n)^{2}}=1$, we have
\[
\lim_{n\to\infty}\sum_{i}r_{i}^{(n)^{2}} p_{i}^{(n)}=1, 
\]
where $p_i^{(n)}:=\bracket{\psi^{(n)}}{E_i^{(n)} \psi^{(n)}}$. 
Further, since $\overline{R}^{(n)^2}$ is uniformly integrable, for any $\varepsilon>0$, there exists an $M>0$ such that
\[
\limsup_{n\to\infty}\sum_{i:\,r_{i}^{(n)}>M}r_{i}^{(n)^{2}} p_{i}^{(n)}<\varepsilon.
\]
It then follows that
\begin{eqnarray*}
\liminf_{n\to\infty}\sqrt{\Tr\rho^{(n)}\sigma^{(n)}} 
&\geq& \liminf_{n\to\infty}\sqrt{\Tr\rho^{(n)}R^{(n)}\rho^{(n)}R^{(n)}}\\
&=& \liminf_{n\to\infty}\bra{\psi^{(n)}}R^{(n)}\ket{\psi^{(n)}}\\
&=& \liminf_{n\to\infty}\bra{\psi^{(n)}}\overline{R}^{(n)}\ket{\psi^{(n)}} \\
&=& \liminf_{n\to\infty}\sum_{i}r_{i}^{(n)} p_{i}^{(n)} \\
&\ge& \liminf_{n\to\infty}\sum_{i:\, r_{i}^{(n)}\leq M}r_{i}^{(n)} p_{i}^{(n)}\\
&\ge& \liminf_{n\to\infty}\sum_{i:\,r_{i}^{(n)}\leq M}\frac{r_{i}^{(n)^{2}}}{M} p_{i}^{(n)}\\
& =& \frac{1}{M}\left(1-\limsup_{n\to\infty}\sum_{i:\,r_{i}^{(n)}>M}r_{i}^{(n)^{2}}\ p_{i}^{(n)}\right)\\
& >& \frac{1}{M}\left(1-\varepsilon\right).
\end{eqnarray*}
This completes the proof.
\end{proof}


\begin{proof}[Proof of Lemma \ref{thm:inifini_both_side}]
We shall prove the following series of equalities for any $\{\xi_{t}\}_{t=1}^{r}\subset \R^{d}$ and $\eta_1,\eta_2\in\R$:
\begin{eqnarray*}
&& \lim_{n\to\infty}\Tr\rho^{(n)}e^{\i\eta_{1}\left(Z^{(n)}+O^{(n)}\right)}
\left\{ \prod_{t=1}^{r}e^{\i\xi_{t}^{i}X_{i}^{(n)}}\right\} 
e^{\i\eta_{2}\left(Z^{(n)}+O^{(n)}\right)}\\
&& \qquad=\lim_{n\to\infty}\Tr\rho^{(n)}e^{\i\eta_{1}\left(Z^{(n)}
+O^{(n)}\right)}\left\{ \prod_{t=1}^{r}e^{\i\xi_{t}^{i}X_{i}^{(n)}}\right\} e^{\i\eta_{2}Z^{(n)}}\\
&& \qquad=\lim_{n\to\infty}\Tr\rho^{(n)}e^{\i\eta_{1}Z^{(n)}}\left\{ \prod_{t=1}^{r}e^{\i\xi_{t}^{i}X_{i}^{(n)}}\right\} e^{\i\eta_{2}Z^{(n)}}.
\end{eqnarray*}
The first equality follows from the Schwartz inequality and \eqref{eq:infini_p}: 
\begin{eqnarray*}
&& \left|\Tr\rho^{(n)}e^{\i\eta_{1}\left(Z^{(n)}+O^{(n)}\right)}\left\{ \prod_{t=1}^{r}e^{\i\xi_{t}^{i}X_{i}^{(n)}}\right\} \left\{ e^{\i\eta_{2}\left(Z^{(n)}+O^{(n)}\right)}-e^{\i\eta_{2}Z^{(n)}}\right\} \right|^2 \\
&& \qquad\leq \Tr\rho^{(n)} \left\{ e^{\i\eta_{2}\left(Z^{(n)}+O^{(n)}\right)}-e^{\i\eta_{2}Z^{(n)}}\right\}^* \left\{ e^{\i\eta_{2}\left(Z^{(n)}+O^{(n)}\right)}-e^{\i\eta_{2}Z^{(n)}}\right\} \\
&& \qquad=2-2\,{\rm Re}\,\Tr\rho^{(n)}e^{-\i\eta_{2}\left(Z^{(n)}+O^{(n)}\right)}e^{\i\eta_{2}Z^{(n)}} \\
&& \qquad \longrightarrow 
2-2\,{\rm Re}\,\Tr\rho^{(n)}e^{-\i\eta_{2} Z^{(n)}}e^{\i\eta_{2}Z^{(n)}}=0.
\end{eqnarray*}
The proof of the second equality is similar. 
\end{proof}


\begin{proof}[Proof of Theorem \ref{thm:qLeCam3}]
We first prove that $\psi$ is a well-defined normal state. 
Let $\overline{R}^{(n)}:={R}^{(n)}+{O}^{(n)}$.
It then follows from assumption (ii) and the sandwiched version of the quantum L\'evy-Cram\'er theorem (Lemma \ref{lem:qLevyCramerSand}) that
\begin{eqnarray}
&&\lim_{n\to\infty}\Tr\rho^{(n)}\mathbbm{1}_{M}\left(\overline{R}^{(n)}\right)\overline{R}^{(n)}
\left\{ \prod_{t=1}^{r}e^{\sqrt{-1}\xi_{t}^{i}X_{i}^{(n)}}\right\} \overline{R}^{(n)}
\mathbbm{1}_{M}\left(\overline{R}^{(n)}\right) \label{eq:leCam_third}\\
&&\qquad\qquad =
\phi\left(\mathbbm{1}_{M}\left(R^{(\infty)}\right)R^{(\infty)}\left\{ \prod_{t=1}^{r}e^{\sqrt{-1}\xi_{t}^{i}X_{i}^{(\infty)}}\right\} R^{(\infty)}\mathbbm{1}_{M}\left(R^{(\infty)}\right) \right), \nonumber
\end{eqnarray}
where $M$ is taken to be a non-atomic point of the probability measure $\mu$ having the characteristic function $\varphi_\mu(\eta):=\phi ( e^{\i \eta R^{(\infty)}} )$.
Setting $\xi_t=0$ for all $t$, taking the limit $M\to\infty$, and recalling the uniform integrability of $\overline{R}^{(n)^2}$ as well as the identity $\lim_{n\to\infty} \Tr \r^{(n)} \overline{R}^{(n)^2}=1$, we have 
\begin{equation}\label{eqn:leCam_third_normal}
\lim_{M\to\infty} \phi\left( \mathbbm{1}_{M}(R^{(\infty)}) R^{(\infty)^2} \right)=1.
\end{equation}
Let $\rho$ be the density operator that represents the state $\phi$. 
For notational simplicity, we set $R:=R^{(\infty)}$ and $R_M:=\mathbbm{1}_M(R) R$.
Then, for any $A\in\B(\H^{(\infty)})$,
\[
 \phi(R_MAR_M)
 =\Tr \rho  R_MAR_M
 =\left(R_M\sqrt{\rho}, A R_M \sqrt{\rho} \right)_{\rm HS}, 
\]
where $(B,C)_{\rm HS}:=\Tr B^*C$ is the Hilbert-Schmidt inner product. 
To verify the well-definedness of $\psi$, it suffices to prove that 
$\phi\left(RAR\right)$ exists and 
\[
\phi\left(RAR\right)=
 \lim_{M\to\infty} \phi\left(R_M A R_M\right)
\]
for any $A\in\B(\H^{(\infty)})$. 
To put it differently, it suffices to prove that
$\left\| R\sqrt{\rho} \right\|_{\rm HS}=1$, and that $\left\| R_M\sqrt{\rho}- R\sqrt{\rho} \right\|_{\rm HS} \to 0$ as $M\to\infty$, where $\|\cdot\|_{\rm HS}:=\sqrt{(\,\cdot\,, \,\cdot\,)_{\rm HS}}$. 
Let
\[
 R=\int_0^\infty \lambda \, dE_\lambda
\] 
be the spectral decomposition of $R$, and let $d\nu(\lambda):=\phi(dE_\lambda)$ be the induced probability measure on $\R$. 
It then follows from \eqref{eqn:leCam_third_normal} that
\[
\left\| R\sqrt{\rho} \right\|_{\rm HS}^2
=\Tr\rho R^2
=\int_0^\infty \lambda^2 \,d\nu(\lambda)
=\lim_{M\to\infty} \int_0^M \lambda^2 \,d\nu(\lambda)
=\lim_{M\to\infty} \phi(R_M^2)
=1,
\] 
and that
\[
\left\| R_M\sqrt{\rho}- R\sqrt{\rho} \right\|_{\rm HS}^2
=\Tr \rho R^2 -\Tr \rho R_M^2
=1-\phi(R_M^2)
\longrightarrow 0
\]
as $M\to\infty$.

We next show that for any $\varepsilon>0$ there is an $M>0$ that satisfies
\begin{eqnarray}
&&\sup_{n} \left| \Tr\rho^{(n)} \overline{R}^{(n)}\left\{ \prod_{t=1}^{r}e^{\sqrt{-1}\xi_{t}^{i}X_{i}^{(n)}}\right\} \overline{R}^{(n)} \right.
\label{eqn:thm6.1step1} \\
&&\qquad\qquad\qquad\qquad \left.
-\Tr\rho^{(n)} \mathbbm{1}_{M}\left(\overline{R}^{(n)}\right)\overline{R}^{(n)}\left\{ \prod_{t=1}^{r}e^{\sqrt{-1}\xi_{t}^{i}X_{i}^{(n)}}\right\} \overline{R}^{(n)}\mathbbm{1}_{M}\left(\overline{R}^{(n)}\right) \right|
<\varepsilon. \nonumber
\end{eqnarray}
In fact, 
\begin{eqnarray*}
{\rm (LHS)} & \leq&
		\sup_{n}
		\left|\Tr\rho^{(n)}\overline{R}^{(n)}\left\{ \prod_{t=1}^{r}e^{\sqrt{-1}\xi_{t}^{i}X_{i}^{(n)}}\right\} \left\{ \overline{R}^{(n)}-\overline{R}^{(n)}\mathbbm{1}_{M}\left(\overline{R}^{(n)}\right)\right\} \right|\\
		&& \,\qquad+\sup_{n}
		\left|\Tr\rho^{(n)}\left\{ \overline{R}^{(n)}-\mathbbm{1}_{M}\left(\overline{R}^{(n)}\right)\overline{R}^{(n)}\right\} \left\{ \prod_{t=1}^{r}e^{\sqrt{-1}\xi_{t}^{i}X_{i}^{(n)}}\right\} \overline{R}^{(n)}\mathbbm{1}_{M}\left(\overline{R}^{(n)}\right)\right|,
\end{eqnarray*}
and by using the uniform integrability of $\overline{R}^{(n)^{2}}$,  we see that
\begin{eqnarray*}
\mbox{\rm (first term in RHS)}
\leq \sup_{n}
\sqrt{\Tr\rho^{(n)}\overline{R}^{(n)^{2}}}\sqrt{\Tr\rho^{(n)} \left(I-\mathbbm{1}_{M}(\overline{R}^{(n)}) \right)\overline{R}^{(n)^2}}
<\frac{\varepsilon}{2},
\end{eqnarray*}
and 
\begin{eqnarray*}
\mbox{\rm (second term in RHS)}
\leq \sup_{n}
\sqrt{\Tr\rho^{(n)} \left(I-\mathbbm{1}_{M}(\overline{R}^{(n)}) \right)\overline{R}^{(n)^2}}
\sqrt{\Tr\rho^{(n)} \mathbbm{1}_{M}(\overline{R}^{(n)}) \overline{R}^{(n)^{2}}}
<\frac{\varepsilon}{2}.
\end{eqnarray*}
An important consequence of \eqref{eqn:thm6.1step1} is the following identity
\begin{equation}\label{eq:step_ui2}
\lim_{n\to\infty}\Tr\rho^{(n)}\overline{R}^{(n)}\left\{ \prod_{t=1}^{r}e^{\sqrt{-1}\xi_{t}^{i}X_{i}^{(n)}}\right\} \overline{R}^{(n)}
=\psi \left(\left\{ \prod_{t=1}^{r}e^{\sqrt{-1}\xi_{t}^{i}X_{i}^{(\infty)}}\right\} \right),
\end{equation}
which follows by taking the limit $M\to\infty$ in \eqref{eq:leCam_third}. 

We next observe that
\begin{eqnarray}
\lim_{n\to\infty}\Tr\rho^{(n)}\overline{R}^{(n)}\left\{ \prod_{t=1}^{r}e^{\sqrt{-1}\xi_{t}^{i}X_{i}^{(n)}}\right\} \overline{R}^{(n)} 
&=&
\lim_{n\to\infty}\Tr\rho^{(n)}R^{(n)}\left\{ \prod_{t=1}^{r}e^{\sqrt{-1}\xi_{t}^{i}X_{i}^{(n)}}\right\} \overline{R}^{(n)}
\label{eqn:thm6.1step3} \\
&=&
\lim_{n\to\infty}\Tr\rho^{(n)}R^{(n)}\left\{ \prod_{t=1}^{r}e^{\sqrt{-1}\xi_{t}^{i}X_{i}^{(n)}}\right\} R^{(n)}.
\nonumber
\end{eqnarray}
In fact, the first equality follows from 
\begin{eqnarray*}
\left|\Tr\rho^{(n)}O^{(n)}\left\{ \prod_{t=1}^{r}e^{\sqrt{-1}\xi_{t}^{i}X_{i}^{(n)}}\right\} \overline{R}^{(n)}\right| 
\leq \sqrt{\Tr\rho^{(n)}O^{(n)^{2}}}\sqrt{\Tr\rho^{(n)}\overline{R}^{(n)^{2}}}
\longrightarrow 0,
\end{eqnarray*}
and the second from
\begin{eqnarray*}
\left|\Tr\rho^{(n)}R^{(n)} \left\{ \prod_{t=1}^{r}e^{\sqrt{-1}\xi_{t}^{i}X_{i}^{(n)}} \right\} O^{(n)} \right| 
\leq \sqrt{\Tr\rho^{(n)}R^{(n)^{2}}}\sqrt{\Tr\rho^{(n)}O^{(n)^{2}}}
\longrightarrow 0.
\end{eqnarray*}

We further observe that
\begin{equation}\label{eqn:thm6.1step4}
\lim_{n\to\infty}\Tr\sigma^{(n)}\left\{ \prod_{t=1}^{r}e^{\sqrt{-1}\xi_{t}^{i}X_{i}^{(n)}}\right\} 
=
\lim_{n\to\infty}
\Tr\rho^{(n)}R^{(n)}\left\{ \prod_{t=1}^{r}e^{\sqrt{-1}\xi_{t}^{i}X_{i}^{(n)}}\right\} R^{(n)}. 
\end{equation}
In fact,
\begin{eqnarray*}
\left|\Tr\sigma^{(n)}\left\{ \prod_{t=1}^{r}e^{\sqrt{-1}\xi_{t}^{i}X_{i}^{(n)}}\right\} -\Tr\rho^{(n)}R^{(n)}\left\{ \prod_{t=1}^{r}e^{\sqrt{-1}\xi_{t}^{i}X_{i}^{(n)}}\right\} R^{(n)}\right| 
&\le& 
 \Tr\left|\s^{(n)}-R^{(n)}\rho^{(n)}R^{(n)}\right| \\
&=&
 1-\Tr\rho^{(n)}R^{(n)^{2}}
 \longrightarrow 0.
\end{eqnarray*}

Combining \eqref{eqn:thm6.1step4},  \eqref{eqn:thm6.1step3}, and \eqref{eq:step_ui2}, we have 
\begin{equation}\label{eq:lecam3taeget}
\lim_{n\to\infty}\Tr\sigma^{(n)} \left\{\prod_{t=1}^{r}e^{\sqrt{-1}\xi_{t}^{i}X_{i}^{(n)}} \right\}
=\psi \left(\prod_{t=1}^{r}e^{\sqrt{-1}\xi_{t}^{i}X_{i}^{(\infty)}}\right).
\end{equation}
This completes the proof.
\end{proof}


\begin{proof}[Proof of Theorem \ref{thm:intersect}]
Let
\[
R^{(n)}
=\begin{pmatrix}
0 & 0 & 0\\
0& R_{0}^{(n)} & R_{1}^{(n)} \\
0 & R_{1}^{(n)^{*}}  & R_{2}^{(n)}
\end{pmatrix}
\]
be a version of the square-root likelihood ratio $\ratio{\sigma^{(n)}}{\rho^{(n)}}$ that satisfies
\begin{equation}\label{eq:ex_eq1}
R^{(n)}\rho^{(n)}R^{(n)} 
= 
\begin{pmatrix}
0 & 0 & 0\\
0 & R_{0}^{(n)}\rho_{0}^{(n)}R_{0}^{(n)} & R_{0}^{(n)}\rho_{0}^{(n)}R_{1}^{(n)} \\
0 & R_{1}^{(n)^*}\rho_{0}^{(n)}R_{0}^{(n)} & R_{1}^{(n)^*}\rho_{0}^{(n)}R_{1}^{(n)} 
\end{pmatrix}
\leq\sigma^{(n)}
\end{equation}
and
\begin{equation}\label{eq:ex_eq2}
 \left(\s^{(n)}-R^{(n)}\rho^{(n)}R^{(n)} \right)\perp \r^{(n)}.
 \end{equation}
Since $R_{1}^{(n)^*}\rho_{0}^{(n)}R_{1}^{(n)}\leq\sigma_{2}^{(n)}$ and $\lim_{n\to\infty}\Tr\sigma_{2}^{(n)}=0$,
we see that
\begin{equation}\label{eq:ex_eq3}
 \lim_{n\to\infty}\Tr\rho_{0}^{(n)}R_{1}^{(n)}R_{1}^{(n)^*}=0.
\end{equation}
Further, let 
\[
 \tilde{\sigma}_{0}^{(n)}:=\frac{\sigma_{0}^{(n)}}{\Tr\sigma_{0}^{(n)}},\qquad
 \tilde{\rho}_{0}^{(n)}:=\frac{\rho_{0}^{(n)}}{\Tr\rho_{0}^{(n)}},\qquad
 \tilde{R}_{0}^{(n)}:=\frac{1}{\kappa^{(n)}}R_{0}^{(n)}
\]
where
\[
\kappa^{(n)}=\sqrt{\frac{\Tr\sigma_{0}^{(n)}}{\Tr\rho_{0}^{(n)}}}.
\]
Then it follows from \eqref{eq:ex_eq1} and \eqref{eq:ex_eq2} that $\tilde{R}_{0}^{(n)}\tilde{\rho}_{0}^{(n)}\tilde{R}_{0}^{(n)}\leq\tilde{\sigma}_{0}^{(n)}$
and $\left(\tilde{\sigma}_{0}^{(n)}-\tilde{R}_{0}^{(n)}\tilde{\rho}_{0}^{(n)}\tilde{R}_{0}^{(n)}\right)\perp\tilde{\rho}_{0}^{(n)}$.
This implies that $\tilde{R}_{0}^{(n)}$ is a version of the square-root likelihood ratio $\ratio{\tilde{\sigma}_{0}^{(n)}}{\tilde{\rho}_{0}^{(n)}}$. 
	
The assumption $\tilde{\sigma}_{0}^{(n)}\vartriangleleft\tilde{\rho}_{0}^{(n)}$ ensures the existence of 
a sequence $O_{0}^{(n)}=o_{L^2}(\tilde{\rho}_{0}^{(n)})$ 
such that $\tilde{\sigma}_{0}^{(n)}\vartriangleleft_{O_{0}^{(n)}}\tilde{\rho}_{0}^{(n)}$.
Let $\overline{R}_{0}^{(n)}:=\tilde{R}_{0}^{(n)}+O_{0}^{(n)}$, and let
\[
\overline{R}^{(n)}=\begin{pmatrix}
0 & 0 & 0\\
0 & \kappa^{(n)} \overline{R}_{0}^{(n)} & 0\\
0 & 0 & 0
\end{pmatrix}.
\]
Then we see that
\begin{eqnarray*}
O^{(n)} 
:= \overline{R}^{(n)}-R^{(n)}
=\begin{pmatrix}
0 & 0 & 0\\
0& \kappa^{(n)}O_{0}^{(n)} & -R_{1}^{(n)} \\
0 & -R_{1}^{(n)^{*}}  & -R_{2}^{(n)}
\end{pmatrix}
\end{eqnarray*}
is $L^{2}$-infinitesimal with respect to $\rho^{(n)}$.
In fact, due to \eqref{eq:ex_eq3}, 
\[
\lim_{n\to\infty} \Tr\rho^{(n)}O^{(n)^{2}}
=
\lim_{n\to\infty} \Tr\rho_{0}^{(n)}\left\{\kappa^{(n)^{2}}O_{0}^{(n)^{2}}+R_{1}^{(n)}R_{1}^{(n)^*} \right\}
=0. 
\]
Furthermore, 
\[
\lim_{n\to\infty}\Tr\rho^{(n)}\overline{R}^{(n)^{2}} 
=
\lim_{n\to\infty}\kappa^{(n)^2}\Tr\rho_{0}^{(n)}\overline{R}_{0}^{(n)^{2}}
=
\lim_{n\to\infty} (\Tr\sigma_{0}^{(n)})\Tr\tilde{\rho}_{0}^{(n)}\overline{R}_{0}^{(n)^{2}}
=1,
\]
and
\begin{eqnarray*}
\lim_{M\to\infty}\liminf_{n\to\infty}\Tr\rho^{(n)}\overline{R}^{(n)^{2}}\mathbbm{1}_{M}(\overline{R}^{(n)})
&=& 
\lim_{M\to\infty}\liminf_{n\to\infty}\kappa^{(n)^{2}}\Tr\rho_{0}^{(n)}\overline{R}_{0}^{(n)^{2}}\mathbbm{1}_{M}( \kappa^{(n)} \overline{R}_0^{(n)}) \\
&=& 
\lim_{M\to\infty}\liminf_{n\to\infty}(\Tr\sigma_{0}^{(n)})\Tr\tilde{\rho}_{0}^{(n)}\overline{R}_{0}^{(n)^{2}} \mathbbm{1}_{M/\kappa^{(n)}}(\overline{R}_0^{(n)}) \\
&\geq&
 \lim_{M\to\infty}\liminf_{n\to\infty}(\Tr\sigma_{0}^{(n)})\Tr\tilde{\rho}_{0}^{(n)}\overline{R}_{0}^{(n)^{2}} \mathbbm{1}_{\lambda M}(\overline{R}_0^{(n)})
=1,
\end{eqnarray*}
where 
\[
\lambda:=\liminf_{n\to\infty}\frac{1}{\kappa^{(n)}} = \liminf_{n\to\infty} \sqrt{\Tr \r^{(n)}_0} > 0.
\] 
Thus $\sigma^{(n)}\vartriangleleft_{O^{(n)}}\rho^{(n)}$. 
\end{proof}


\begin{proof}[Proof of Theorem \ref{thm:kakutani}]
We first prove the `only if' part. 
Due to assumption, there is an $L^2$-infinitesimal sequence $O^{(n)}$ of observables satisfying the condition that for any $\varepsilon>0$, there is an $M>0$ such that
\[
 \liminf_{n\to\infty}\Tr\rho^{(n)} \mathbbm{1}_{M}(\overline{R}^{(n)})\overline{R}^{(n)^{2}}>1-\varepsilon,
\]
where $\overline{R}^{(n)}:=R^{(n)}+O^{(n)}$ with $R^{(n)}:=\bigotimes_{i=1}^n R_i$. 
It then follows that
\begin{eqnarray*}
\prod_{i=1}^\infty \Tr\rho_i R_i
&=& \lim_{n\to\infty} \Tr \rho^{(n)} R^{(n)} \\
&=& \lim_{n\to\infty} \Tr \rho^{(n)} \overline{R}^{(n)} \\
&\ge&\liminf_{n\to\infty}  \Tr \rho^{(n)} \overline{R}^{(n)} \mathbbm{1}_{M}(\overline{R}^{(n)})\\
&\ge&\liminf_{n\to\infty}  \Tr \rho^{(n)} \frac{\overline{R}^{(n)^2}}{M} \mathbbm{1}_{M}(\overline{R}^{(n)})\\
&>& \frac{1}{M}(1-\varepsilon).
\end{eqnarray*}
Further, the equivalence of \eqref{eqn:kakutaniCriterion} and \eqref{eqn:kakutaniCriterion2} is well known, (see \cite[Section 14.12]{Williams}, for example).

We next prove the `if' part. 
Since $\sigma^{(n)}\ll \rho^{(n)}$,  we have $\Tr \rho^{(n)} R^{(n)^2}=1$ for all $n$. 
It then suffices to prove that ${R}^{(n)^{2}}$ is uniformly integrable under $\rho^{(n)}$. 
For each $i\in\N$, let
\[
 R_i=\sum_{x\in\X_i} r_i(x) \ket{\psi_i(x)}\bra{\psi_i(x)}
\]
be a Schatten decomposition of $R_i$, where $\X_i=\{1,\dots,\dim\H_i\}$ is a standard reference set that put labels on the eigenvalues $r_i(x)$ and eigenvectors $\psi_i(x)$.
Note that the totality $\{\psi_i(x)\}_{x\in\X_i}$ of eigenvectors forms an orthonormal basis of $\H_i$. 
Let
\[
 p_i(x):=\bracket{\psi_i(x)}{\rho_i \psi_i(x)},\qquad
 q_i(x):=\bracket{\psi_i(x)}{\sigma_i \psi_i(x)}.
\]
Then $P_i:=(p_i(x))_{x\in\X_i}$ and $Q_i:=(q_i(x))_{x\in\X_i}$ are regarded as classical probability distributions on $\X_i$. 
Due to the identity $\sigma_i=R_i\rho_i R_i$, we have
\[
  q_i(x)=p_i(x) r_i(x)^2,\qquad (\forall x\in\X_i),
\]
which implies that $Q_i \ll P_i$ for all $i\in\N$.
Now, since
\[
 \Tr\rho_iR_i=\sum_{x\in\X_i} p_i(x) r_i(x)=\sum_{x\in\X_i} \sqrt{p_i(x)q_i(x)},
\]
assumption \eqref{eqn:kakutaniCriterion} is equivalent to 
\[
\prod_{i=1}^\infty \left(\sum_{x\in\X_i} \sqrt{p_i(x)q_i(x)}\right)>0.
\]
This is nothing but the celebrated Kakutani criterion for the infinite product measure $\prod_i Q_i$ to be absolutely continuous to $\prod_i P_i$, (cf. \cite{{Kakutani},{Williams}}). 
As a consequence, the classical likelihood ratio process
\[
 L^{(n)}(X_1,\dots,X_n):=\prod_{i=1}^n \,\frac{q_i(X_i)}{p_i(X_i)}
\]
is uniformly integrable under $\prod_i P_i$, (cf. \cite[Section 14.17]{Williams}).
The uniform integrability of $R^{(n)^2}$ under $\rho^{(n)}$ now follows immediately from the identity
\[
 \Tr \rho^{(n)} \mathbbm{1}_{M}({R}^{(n)}){R}^{(n)^{2}}
 =E_{P^{(n)}}\left[ \mathbbm{1}_{M^2} ({L}^{(n)}){L}^{(n)} \right],
\]
where $P^{(n)}:=\prod_{i=1}^n P_i$. 
\end{proof}


\begin{proof}[Proof of Theorem \ref{thm:iid}]
Since the symmetric logarithmic derivative $L_i$ at $\theta_0$ satisfies $\Tr\rho_{\theta_0} L_i=0$ for all $i\in\{1,\dots, d\}$, the property (i) in Definition \ref{def:QLAN} is an immediate consequence of an i.i.d. version of the quantum central limit theorem \cite{{qclt},{YFG}}.

In order to prove (ii) in Definition \ref{def:QLAN}, we first calculate the square-root likelihood ratio $\ratio{\r_{\th}^{\otimes n}}{\r_{\th_0}^{\otimes n}}$ 
between $\rho_{\theta}^{\otimes n}$ and $\rho_{\theta_0}^{\otimes n}$. 
Let $\r_\th=\r_\th^{ac}+\r_\th^\perp$ be the Lebesgue decomposition with respect to $\r_{\th_0}$. 
Then
\begin{equation}\label{eqn:absoluteContinuity-n}
\rho_{\theta}^{\otimes n} 
\ge 
\left(\rho_{\theta}^{ac}\right)^{\otimes n} 
=\left(R_{\th} \rho_{\theta_{0}} R_{\th} \right)^{\otimes n}
=R_{\th}^{\otimes n} \,\rho_{\theta_{0}}^{\otimes n}\, R_{\th}^{\otimes n},
\end{equation}
where $R_{\th}=\ratio{\r_{\th}}{\r_{\th_0}}$. 
On the other hand,
\[
\Tr \r_{\th_0}\r_\th= \Tr \r_{\th_0}\r_\th^{ac}+\Tr \r_{\th_0}\r_\th^\perp=\Tr \r_{\th_0}\r_\th^{ac}
=\Tr \rho_{\theta_0} \left(R_{\th}\rho_{\theta_{0}}R_{\th}\right).
\]
Therefore, 
\begin{eqnarray*}
\Tr \rho_{\theta_0}^{\otimes n} 
\left[ \rho_{\theta}^{\otimes n} - \left(R_{\th}\rho_{\theta_{0}}R_{\th}\right)^{\otimes n} \right] 
=
(\Tr \rho_{\theta_0} \rho_{\theta})^n - \left(\Tr \rho_{\theta_0} \left(R_{\th}\rho_{\theta_{0}}R_{\th}\right) \right)^n=0.
\end{eqnarray*}
Due to Lemma \ref{lem:1}, this implies that
\begin{equation}\label{eqn:singular-n} 
\rho_{\theta_0}^{\otimes n} \perp \left[ \rho_{\theta}^{\otimes n} - \left(R_{\th}\rho_{\theta_{0}}R_{\th} \right)^{\otimes n} \right].
\end{equation}
From \eqref{eqn:absoluteContinuity-n} and \eqref{eqn:singular-n}, we have the quantum Lebesgue decomposition
\[
\r_\th^{\otimes n}=(\r_\th^{\otimes n})^{ac}+(\r_\th^{\otimes n})^\perp
\] 
with respect to $\r_{\th_0}^{\otimes n}$, where
\[
(\r_\th^{\otimes n})^{ac}
=R_{\th}^{\otimes n}\,\rho_{\theta_{0}}^{\otimes n}\, R_{\th}^{\otimes n}
\qquad\mbox{and}\qquad
(\r_\th^{\otimes n})^\perp
=\rho_{\theta}^{\otimes n} - R_{\th}^{\otimes n}\,\rho_{\theta_{0}}^{\otimes n}\, R_{\th}^{\otimes n}.
\]
Consequently, $R_{\th}^{\otimes n}$ gives a version of the square-root likelihood ratio $\ratio{\r_{\th}^{\otimes n}}{\r_{\th_0}^{\otimes n}}$.

Let us proceed to the proof of (ii) in Definition \ref{def:QLAN}. 
Since $R_{h}$ is differentiable at $h=0$ and $R_{0}=I$, it is expanded as 
\[
R_{h}=I+\frac{1}{2}A_{i}h^{i}+o(\left\Vert h\right\Vert ).
\]
Due to assumption \eqref{eq:oh2},
\begin{eqnarray*}
\rho_{\theta_{0}+h} 
= R_{h}\rho_{\theta_{0}}R_{h}+o(\left\Vert h\right\Vert ^{2})
= \rho_{\theta_{0}}+\frac{1}{2}\left(A_{i}\rho_{\theta_{0}}+\rho_{\theta_{0}}A_{i}\right)h^{i}+o(\left\Vert h\right\Vert ).
\end{eqnarray*}
As a consequence,
the selfadjoint operator $A_i$ is also a version of the $i$th SLD at $\theta_0$.
To evaluate the higher order term of $R_h$, let
\[
B(h):=R_{h}-I-\frac{1}{2}A_{i}h^{i}.
\]
Then
\begin{eqnarray*}
\Tr\rho_{\theta_{0}}R_{h}^{2} 
&=& \Tr\rho_{\theta_{0}}\left(I+\frac{1}{2}A_{i}h^{i}+B(h)\right)^{2}\\
&=& \Tr\rho_{\theta_{0}}\left(I+\frac{1}{4}A_{i}A_{j}h^{i}h^{j}+2B(h)+A_{i}h^{i}+B(h)^{2}+\frac{1}{2}A_{i}h^{i}B(h)+\frac{1}{2}B(h)A_{i}h^{i}\right)\\
&=& 1+\frac{1}{4}J_{ji}h^{i}h^{j}+2\Tr\rho_{\theta_{0}}B(h)+o(\left\Vert h\right\Vert ^{2}).
\end{eqnarray*}
This relation and assumption \eqref{eq:oh2} lead to 
\begin{equation}
\Tr\rho_{\theta_{0}}B(h)=-\frac{1}{8}J_{ji}h^{i}h^{j}+o(\left\Vert h\right\Vert ^{2})\label{eq:Bh}.
\end{equation}
In order to prove (ii), it suffices to show that
\begin{eqnarray*}
O_{h}^{(n)}
&:=&\exp\left[\frac{1}{2}\left(h^{i}\Delta_{i}^{(n)}-\frac{1}{2}J_{ji}h^{i}h^{j}\right) \right]
-(R_{h/\sqrt{n}})^{\otimes n}\\
& =&e^{-\frac{1}{4}J_{ji}h^{i}h^{j}}\left\{ e^{\frac{1}{2\sqrt{n}}h^{i}L_{i}}\right\} ^{\otimes n}-(R_{h/\sqrt{n}})^{\otimes n}
\end{eqnarray*}
is $L^{2}$-infinitesimal under $\rho_{\theta_{0}}^{\otimes n}$, setting the D-infinitesimal residual term $o_D \left(h^i \Delta^{(n)}_i,\rho_{\theta_{0}}^{(n)}\right)$ in (ii) to be zero for all $n$.
In fact, 
\begin{eqnarray}
\Tr\rho_{\theta_{0}}^{\otimes n}O_{h}^{(n)^{2}} 
& =&e^{-\frac{1}{2}J_{ji}h^{i}h^{j}}\left\{ \Tr\rho_{\theta_{0}}e^{\frac{1}{\sqrt{n}}h^{i}L_{i}}\right\} ^{n}
+\left\{ \Tr\rho_{\theta_{0}}R_{h/\sqrt{n}}^{2}\right\} ^{n}
\label{eq:four_term} \\
&& 
-2\,e^{-\frac{1}{4}J_{ji}h^{i}h^{j}}\,
{\rm Re}\left\{ \Tr\rho_{\theta_{0}}e^{\frac{1}{2\sqrt{n}}h^{i}L_{i}}R_{h/\sqrt{n}}\right\} ^{n}.
\nonumber
\end{eqnarray}
The first term in the right-hand side of \eqref{eq:four_term} is evaluated as follows:
\begin{eqnarray*}
e^{-\frac{1}{2}J_{ji}h^{i}h^{j}}\left\{ \Tr\rho_{\theta_{0}}e^{\frac{1}{\sqrt{n}}h^{i}L_i}\right\} ^{n} 
&=& e^{-\frac{1}{2}J_{ji}h^{i}h^{j}}\left\{ \Tr\rho_{\theta_{0}}\left(I+\frac{1}{\sqrt{n}}h^{i}L_{i}
+\frac{1}{2n}L_{i}L_{j}h^{i}h^{j}+o\left(\frac{1}{n}\right)\right)\right\} ^{n}\\
& =& e^{-\frac{1}{2}J_{ji}h^{i}h^{j}}
\left(1+\frac{1}{2n}J_{ji}h^{i}h^{j}+o\left(\frac{1}{n}\right)\right)^{n}\longrightarrow 1. 
\end{eqnarray*}
The second term is evaluated from \eqref{eq:oh2} as
\[
\left\{ \Tr\rho_{\theta_{0}}R_{h/\sqrt{n}}^{2}\right\} ^{n}
=\left(1-o\left(\frac{1}{n}\right)\right)^{n}\longrightarrow 1.
\]
Finally, the third term is evaluated from \eqref{eq:Bh} as
\begin{eqnarray*}
&&e^{-\frac{1}{4}J_{ji}h^{i}h^{j}}\left\{ \Tr\rho_{\theta_{0}}e^{\frac{h^{i}}{2\sqrt{n}}L_{i}}R_{h/\sqrt{n}}\right\} ^{n} \\
&&\qquad =e^{-\frac{1}{4}J_{ji}h^{i}h^{j}}\left\{ \Tr\rho_{\theta_{0}}\left(I+\frac{h^{i}}{2\sqrt{n}}L_{i}+\frac{1}{8n}L_{i}L_{j}h^{i}h^{j}+o\left(\frac{1}{n}\right)\right)
\left(I+\frac{h^{k}}{2\sqrt{n}} A_{k}+B\left(\frac{h}{\sqrt{n}}\right)\right)\right\} ^{n}\\
&& \qquad =e^{-\frac{1}{4}J_{ji}h^{i}h^{j}}
\left\{ 1+\frac{1}{4n}J_{ki}h^{i}h^{k}+o\left(\frac{1}{n}\right)\right\} ^{n}
\longrightarrow 1.
\end{eqnarray*}
This proves (ii). 

Having established that $\{\rho_\theta^{\otimes n}\}_n$ is q-LAN at $\theta_0$, 
the property \eqref{eq:iidLeCam3} is now an immediate consequence of Corollary \ref{cor:LeCam3qlan} as well as the quantum central limit theorem
\begin{equation}\label{eq:qcovergenceTogetherIID}
\begin{pmatrix}X^{(n)}\\
\Delta^{(n)}
\end{pmatrix}
\convd{\;\; \rho_{\theta_{0}}^{\otimes n}}
N\left(\begin{pmatrix}0\\
0
\end{pmatrix},\begin{pmatrix}\Sigma & \tau\\
\tau* & J
\end{pmatrix}\right).
\end{equation}
This completes the proof.
\end{proof}

\end{document}